\numberwithin{equation}{section}
\newtheorem{theorem}{Theorem}[section]
\newtheorem{lemma}[theorem]{Lemma}
\newtheorem{proposition}[theorem]{Proposition}
\newtheorem{rem}[theorem]{Remark}
\newtheorem{claim}[theorem]{Claim}
\newtheorem{definition}[theorem]{Definition}
\newtheorem*{mainassumption*}{Main assumption}
\renewenvironment{proof}[1][Proof]{\begin{trivlist}
\item[\hskip \labelsep {\bfseries #1}]}{\qed\end{trivlist}}
\DeclareMathSymbol{\leqslant}{\mathalpha}{AMSa}{"36} 
\DeclareMathSymbol{\geqslant}{\mathalpha}{AMSa}{"3E} 
\DeclareMathSymbol{\eset}{\mathalpha}{AMSb}{"3F}     
\newcommand{\dd}{\,\text{\rm d}}             
\newcommand{\ind}{\mathbf{1}}
\newcommand{\e}{{\rm e}}
\newcommand{\Ceff}{C_{{\rm eff}}}
\newcommand{\cB}{{\ensuremath{\mathcal B}} }
\newcommand{\cP}{{\ensuremath{\mathcal P}} }
\newcommand{\cH}{{\ensuremath{\mathcal H}} }
\newcommand{\cT}{{\ensuremath{\mathcal T}} }
\newcommand{\cD}{{\ensuremath{\mathcal D}} }
\newcommand{\cJ}{{\ensuremath{\mathcal J}} }
\newcommand{\cG}{{\ensuremath{\mathcal G}} }
\newcommand{\cW}{{\ensuremath{\mathcal W}} }
\newcommand{\cK}{{\ensuremath{\mathcal K}} }
\newcommand{\cS}{{\ensuremath{\mathcal S}} }
\newcommand{\bbE}{{\ensuremath{\mathbb E}} }
\newcommand{\E}{{\ensuremath{\mathbb E}} }
\newcommand{\bbN}{{\ensuremath{\mathbb N}} }
\newcommand{\N}{{\ensuremath{\mathbb N}} }
\newcommand{\bbP}{{\ensuremath{\mathbb P}} }
\renewcommand{\P}{{\ensuremath{\mathbb P}} }
\newcommand{\bbR}{{\ensuremath{\mathbb R}} }
\newcommand{\bbZ}{{\ensuremath{\mathbb Z}} }
\newcommand{\Z}{{\ensuremath{\mathbb Z}} }
\newcommand{\ga}{\alpha}
\newcommand{\gd}{\delta}
\newcommand{\gep}{\varepsilon}       
\newcommand{\gp}{\varphi}
\newcommand{\gG}{\Gamma}
\newcommand{\go}{\omega}
\newcommand{\gl}{\lambda}
\newcommand{\gU}{\Upsilon}
\newcommand{\1}[1]{\mathbf 1_{\left\{#1\right\}}}
\def\captionfont@{\footnotesize}
\def\captionheadfont@{\scshape}
\long\def\@makecaption#1#2{%
  \vspace{2mm}
  \setbox\@tempboxa\vbox{\color@setgroup
    \advance\hsize-6pc\noindent
    \captionfont@\captionheadfont@#1\@xp\@ifnotempty\@xp
        {\@cdr#2\@nil}{.\captionfont@\upshape\enspace#2}%
    \unskip\kern-6pc\par
    \global\setbox\@ne\lastbox\color@endgroup}%
  \ifhbox\@ne 
    \setbox\@ne\hbox{\unhbox\@ne\unskip\unskip\unpenalty\unkern}%
  \fi
  \ifdim\wd\@tempboxa=\z@ 
    \setbox\@ne\hbox to\columnwidth{\hss\kern-6pc\box\@ne\hss}%
  \else 
    \setbox\@ne\vbox{\unvbox\@tempboxa\parskip\z@skip
        \noindent\unhbox\@ne\advance\hsize-6pc\par}%
\fi
  \ifnum\@tempcnta<64 
    \addvspace\abovecaptionskip
    \moveright 3pc\box\@ne
  \else 
    \moveright 3pc\box\@ne
    \nobreak
    \vskip\belowcaptionskip
  \fi
\relax
}
\def\writefig#1 #2 #3 {\rlap{\kern #1 truecm
\raise #2 truecm \hbox{#3}}}
\newenvironment{custom}[1]
  {\innercustom}
  {\endinnercustom}
\author[Q.~Berger]{Quentin Berger}
\address{LPSM, Sorbonne Universit\'e,  UMR 8001\\
Campus Pierre et Marie Curie, Bo\^ite courrier 158, 4 Place Jussieu, 75252 Paris Cedex 05}
\email{quentin.berger@sorbonne-universite.fr}
\author[M.~Salvi]{Michele Salvi}
\address{CMAP, \'Ecole Polytechnique \& INRA\\
Route de Saclay, 91128 Palaiseau Cedex }
\email{michele.salvi@polytechnique.edu}
\begin{document}

\title[Scaling limit of sub-ballistic 1D RW BiRC]{Scaling limit of sub-ballistic
1D Random Walk among Biased  Conductances:
\textit{a story of wells and walls}}

\subjclass[2010]{primary 60K37, 
 60F17, 
 60G52; 
 secondary 82D30 
 } 
\keywords{Random walk, random environment, conductance model, sub-ballisticity, scaling limit, subordinator, trapping phenomenon, aging, localization.}

\thanks{Q.~Berger acknowledges the support of ANR grant SWiWS (ANR-17-CE40-0032-0). \\
\indent	M.~Salvi acknowledges the support of ANR grant CADENCE (ANR-16-CE32-0007-01).}

\begin{abstract}
We consider a one-dimensional random walk among biased i.i.d.\ conductances, in the case where the random walk is transient but sub-ballistic: this occurs when the conductances have a heavy-tail  at $+\infty$ or at $0$.
We prove that the scaling limit of the process is the inverse of an $\alpha$-stable subordinator, which indicates an aging phenomenon, expressed in terms of the generalized arcsine law.
In analogy with the case of an i.i.d.\ random environment studied in details in \cite{ESZ09a,ESTZ13}, some ``traps'' are responsible for the slowdown of the random walk. However, the phenomenology is somehow different (and richer) here. In particular, three types of traps may occur, depending on the fine properties of the tails of the conductances: (i) a very large conductance (a \emph{well} in the potential); (ii) a very small conductance (a \emph{wall} in the potential); (iii) the combination of a  large conductance followed shortly after by a small conductance (a \emph{well-and-wall} in the potential). 
\end{abstract}

\maketitle
%

\section{Introduction}

Random walks in random environment have been  studied extensively over the past forty years, both from a physical and a mathematical perspective.
In dimension one, they were introduced in the mathematical literature by Solomon in~\cite{Sol75}, who gave a criterion for transience/recurrence, and identified three possible regimes: recurrent, transient with positive speed, transient with zero speed.

The behavior of recurrent random walks in (genuine) random environment has been further studied by Sinai in \cite{Sinai82}, who showed a strong slowdown of the walk, with an unusual scaling $(\log n)^2$. The limiting law has then been identified in \cite{Gol86,Kes86}, and the scaling limit of the process is an interesting singular diffusion in random environment, see~\cite{FIN02}.
The case of transient sub-ballistic random walks in i.i.d.\ random environment has been considered in \cite{KKS75}, and then in \cite{ESZ09a, ESZ09,ESTZ13}. In particular,  the scaling limit of the walk is the inverse of an $\ga$-stable subordinator, and the random walk is shown to exhibit an aging phenomenon, \textit{i.e.}\ the two-time correlation function converges when both times tend to infinity with a fixed ratio, the limit being a non-trivial function of the ratio (aging has been extensively studied in the physics literature, in the context of out-of-equilibrium glassy disordered systems, see \cite{BCKM98} for an overview).

In the present paper we consider the case of a one-dimensional random walk among i.i.d.\ random conductances: in this case, the random walk is recurrent. One central question in Physics (see e.g.~\cite{NJBODG10}) is to understand the effect of an external field of intensity $\lambda$ (producing a bias in the conductances) on the behavior of the walk.
In particular, as soon as $\lambda>0$, the walk is transient to the right. Under some integrability condition of the conductances, the walk is ballistic; this regime has been studied for instance in \cite{LD16,FS18} (see also references therein).
Here we investigate the regime where the random walk among biased random conductances is sub-ballistic: we show that  its scaling limit is the inverse of an $\ga$-stable subordinator, confirming the universality of the aging phenomenon, cf.~\cite{BAC08, Zin09}. 
	
As mentioned above, a similar behavior is observed for sub-ballistic transient random walks in i.i.d.\ environment. We show however that as soon as the independence hypothesis on the environment is dropped (conductances introduce automatically a strong dependence between neighbours), the trapping mechanism for the walk changes dramatically and might become more irregular.
We also point out that convergence to the inverse of an $\alpha$-stable subordinator has also been shown for walks among biased random conductances in dimension $d\geq 2$  in \cite{FK16}. The one-dimensional case, though, presents once more a wilder zoology of possible trapping mechanisms.
	In particular, we might have three different kind of traps, given either by a very large conductance, either by a very small conductance or by a combination of the two. The kind of traps that contributes the most to the slowdown of the walk  is given by the fine properties of the tails of the conductances at $+\infty$ and at $0$. The distribution of the depth of such traps (this is roughly the amount of time that the walk will spend on the trap) has to be studied in terms of the product of random variables with regularly varying tails, which might exhibit unexpected behaviors.

\subsection{General setting of the paper}

Let $(X_n)_{n\in \N}$ be a discrete-time random walk on an environment $\go$ given by a sequence $\{c_{x}\}_{x\in \bbZ}$ of random conductances that are i.i.d.\  under  measure $\P$. 
For a fixed realization of $\go$, we call $P^\go$ the law of the random walk
 $(X_n)_{n\in\N}$ which  starts at the origin and has transition probabilities
\begin{align*}
P^{\go} \big( X_{n+1} =x+1  \mid X_n= x\big) 
	= \frac{c_x}{c_{x-1} + c_x} \,, 
	\quad  P^{\go} \big( X_{n+1} =x-1  \mid X_n= x\big) = \frac{c_{x-1}}{c_{x-1}+c_x} \, .
\end{align*}
In the usual language of random walks in random environment, where the probability of jumping from $x$ to $x+1$ is called $\go_x$, we have $\go_x:= \frac{c_x}{c_{x-1}+c_x}$.
For $x\in \bbZ$ define $\rho_x := \frac{1-\go_x}{\go_x} = \frac{c_{x-1}}{c_x}$: since the $\{c_x\}_{x\in \bbZ}$ are independent, we get that $\bbE[\log \rho_0] =0$ (provided that $\log c_0$ is integrable). As a consequence, Theorem 2.1.2 in \cite{TZ04} tells us that $(X_n)_{n \geq 0}$ is  $P^{\go}$-a.s.~recurrent for $\bbP$-a.e.\ $\go$.

We stress that while in the seminal paper by Solomon \cite{Sol75} (and in many other works) the $\{\go_x\}_{x\in \bbZ}$ are taken i.i.d., this is clearly not true anymore in the case of conductances. On the other hand, the conductance model boasts the important feature of being reversible with respect to the measure $\pi(x) = c_{x-1}+c_x$.
We refer to \cite{B11} for an extensive account on the random conductance model, which has been widely studied in the literature.

Now, we introduce an external field of intensity $\gl>0$, which corresponds to a tilt of the conductances:
\begin{equation}
\label{eq:biased_cond}
c_x^{\gl} = \e^{\gl x } c_x  \, .
\end{equation} 
The process $(X_n)_{n\in\N}$ on these tilted conductances is called random walk among Biased Random Conductances (which we abbreviate as BiRC). 
In the BiRC setting, for $x\in \bbZ$ we have $\rho_x =\rho_x(\lambda) := \e^{- \lambda} c_{x-1}/c_x$ (here and in the rest of the paper, we drop the dependence of $\rho_x$ on $\lambda$). We get that $\bbE[\log \rho_0] = -\lambda <0$ (provided that $\log c_0$ is integrable),  so that $(X_n)_{n\in \bbN}$ is $P^\go$-a.s.\ transient to the right for $\P$-a.e.~$\go$, see \cite[Thm~2.1.2]{TZ04}.
Additionally, the asymptotic velocity $\mathbf{v}(\gl) = \lim_{n\to+\infty} {X_n}/{n}$  exists and is $\bbP\otimes P^\go$-a.s.~equal to
\[
\mathbf{v}(\gl)   =  \frac{1}{\bbE[\bar S]} \qquad \text{with } \quad\bbE[\bar S] = 1 + 2 \bbE[c_0]\bbE[1/c_0] \frac{\e^{-\gl}}{1-\e^{-\gl}} \, ,
\]
cf.~\cite[Thm.~2.1.9]{TZ04}.
Hence, for $\lambda>0$, $\mathbf{v}(\gl)>0$ if and only if $\bbE[c_0] <+\infty$ and $\bbE[1/c_0] <+\infty$.
The zero velocity regime can therefore occur for two different reasons:  
the conductances  have some heavy tail at $+\infty$ or they have some heavy tail at $0$.

In our previous work \cite{BS17}, we consider the sub-ballistic regime, where  $\bbE[c_0] = +\infty$ or $\bbE[1/c_0] = +\infty$, and we find the correct order for the scaling of $X_n$. More precisely, we assume that there are some $\ga_\infty ,\ga_0\in[0,+\infty]$ with $\alpha := \min(\alpha_0,\alpha_{\infty}) \le 1$, such that
\begin{equation}
\label{hyp:tail1}
\lim_{t\to+\infty} \frac{\log \bbP(c_0 >t)}{\log t} 
	= -\alpha_\infty \,, \qquad 
\lim_{\gep \to 0} \frac{\log \bbP(c_0<\gep)}{\log \gep} 
	= \alpha_0 \, .
\end{equation}
In Theorem~1.1 of \cite{BS17} we prove that for each $\gl>0$
\[
\lim_{n\to\infty} \frac{\log X_n}{\log n} = \alpha  \qquad \bbP\otimes P^\go-a.s.
\]

The main goal of the present paper is to make this result much more precise, and prove the convergence of $X_n$ rescaled by $n^\ga$ (properly corrected by a slowly varying function) to the inverse of an $\alpha$-stable subordinator. 
In  \cite{ESZ09, ESTZ13} the authors prove this type of result for  one-dimensional random walks in random environment, where the transition probabilities $\{\go_x\}_{x\in\Z}$ are i.i.d. Their study is based on the analysis of the so-called potential of the environment: the walk is slowed-down by large regions with an atypical value of the potential. As we shall see, this is in sharp contrast with our setting, where the trapping parts of the environment are determined by one or at most two abnormal values of the conductances, see the discussion in Section \ref{commentsontraps}.

\subsection{Main assumption and main result}
Having only \eqref{hyp:tail1} is not sufficient for describing a functional limit theorem for the BiRC. In the present paper we will make the following stronger assumption. 

For two functions $f$ and $g$, we write that $f(t)\sim g(t)$ if $\lim_{t\to\infty} f(t)/g(t)=1$.
\begin{custom}{``Traps''}
\label{mainassumption}
There exist some slowly varying functions $L_\infty(\cdot), L_0(\cdot) $ and some $\ga_\infty, \ga_0>0 $ such that, for $t>1$ and $\varepsilon<1$,
\begin{equation}
\label{hyp:tail2}
\bbP(c_0 >t) = L_{\infty}(t)\,  t^{-\alpha_{\infty}} \,  \qquad \text{and} \, \qquad
 \bbP(c_0 <\gep) = L_0 (1/\gep)\,  \gep^{\alpha_0}\, ,
\end{equation}
with $\alpha:=\min\{\ga_0,\ga_\infty\}<1$.
If $\ga_0=\ga_{\infty}$,  we additionally assume that $v\mapsto L_{\infty}(\e^{v})$, $ v\mapsto L_0(\e^{v})$ are regularly varying with respective indices $\gamma_{\infty},\gamma_0$, 
\textit{i.e.}\ there are slowly varying functions $\gp_0, \gp_{\infty}$ such that $L_{\infty}(x) = \gp_{\infty}( \log x) (\log x)^{\gamma_{\infty}}$,  $L_{0}(x) = \gp_{0}( \log x) (\log x)^{\gamma_0}$.
We suppose that $\gamma_{\infty}\neq -1,\gamma_0\neq -1$.
\end{custom}


As it will be clear below, large values of $\rho_x$ will be associated to ``traps'' responsible for the slowdown of the random walk $(X_n)_{n\in\N}$: it is therefore crucial to obtain the tail behavior of $\rho_x$, in order to be able to quantify the depth of the traps.
Assumption~\ref{mainassumption} serves that purpose, and is the condition that enables one to obtain the behavior of $\bbP(\rho_0 > t)$ as $t\to+\infty$. Indeed, Corollary~5 in \cite{C86} gives the the following proposition:
\begin{proposition}
\label{prop:rho0}
Assume that Assumption~\ref{mainassumption} holds. Then, there is a slowly varying function $\psi(\cdot)$ such that 
\begin{equation}
\bbP(\rho_0 >t) = \bbP\big( \e^{-\lambda} c_{-1}/c_0 >  t\big) \sim \psi(t) t^{-\ga} \, \qquad \text{ as } t\to+\infty \, .
\label{tail:rho}
\end{equation}
The function $\psi(t)$ is asymptotic to $\e^{\lambda \ga}$ times:
\begin{align*}
\arraycolsep=2pt\def\arraystretch{1.7}
\begin{array}{ll}
\bbE[c_0^{\ga}] L_{0}(t) \ind_{\{\bbE[c_0^{\ga}] <+\infty \}}  + \bbE[1/c_0^{\ga}] L_{\infty}(t)\ind_{\{\bbE[1/c_0^{\ga}] <+\infty \}}
	&\mbox{if $\,\bbE[c_0^{\ga}]<+\infty$ or $\bbE[1/c_0^{\ga}]  <+\infty$;}\\
\ga   \tfrac{\Gamma(1+\gamma_0) \Gamma(1+\gamma_{\infty})}{\Gamma(2+\gamma_0+\gamma_{\infty})} (\log t) L_0(t) L_{\infty}(t)
	&\mbox{if $\,\bbE[c_0^{\ga}]=+\infty$ and $\bbE[1/c_0^{\ga}]  =+\infty$.} 
\end{array}
\end{align*}
 \noindent
In view of Assumption~\ref{mainassumption}, we have that $v\mapsto \psi(\e^v)$ is regularly varying if  $\ga_0=\ga_{\infty}$.
\end{proposition}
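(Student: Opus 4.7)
The plan is to reduce the statement to a direct application of the cited result (Corollary~5 in~\cite{C86}) on the tails of products of independent regularly varying random variables. First, the deterministic rescaling by $\e^{-\lambda}$ in $\rho_0=\e^{-\lambda}c_{-1}/c_0$ only alters the slowly varying prefactor of any regularly varying tail of index $-\alpha$ by the constant $\e^{\lambda\alpha}$, since $\bbP(\e^{-\lambda}Z>t)=\bbP(Z>\e^{\lambda}t)\sim \e^{\lambda\alpha}\bbP(Z>t)$. It therefore suffices to compute the tail of the ratio $c_{-1}/c_0$ and multiply the resulting slowly varying prefactor by $\e^{\lambda\alpha}$ at the end.

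Write $U:=c_{-1}$ and $V:=1/c_0$; these are independent non-negative random variables with regularly varying tails $\bbP(U>t)=L_\infty(t)\,t^{-\alpha_\infty}$ (by Assumption~\ref{mainassumption}) and $\bbP(V>t)=\bbP(c_0<1/t)=L_0(t)\,t^{-\alpha_0}$. The problem is thus to determine the right tail of the product $UV$ of two independent regularly varying random variables, which is exactly the setting of~\cite[Cor.~5]{C86}. I would handle three regimes in parallel with the two lines of the displayed formula. If $\alpha_\infty<\alpha_0$ (so $\alpha=\alpha_\infty$), then $\bbE[V^{\alpha+\delta}]=\bbE[c_0^{-\alpha-\delta}]$ is finite for some $\delta>0$ since $\alpha<\alpha_0$, and Breiman's lemma gives $\bbP(UV>t)\sim\bbE[c_0^{-\alpha}]\,L_\infty(t)\,t^{-\alpha}$; generically $\bbE[c_0^\alpha]=+\infty$ at the boundary so the $L_0$ indicator in the statement vanishes, and the symmetric case $\alpha_0<\alpha_\infty$ is analogous. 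When $\alpha_0=\alpha_\infty=\alpha$ with at least one of $\bbE[c_0^\alpha],\bbE[1/c_0^\alpha]$ finite, one separates the two mechanisms ``$U$ large, $V$ moderate'' and ``$V$ large, $U$ moderate'' by splitting $\bbP(UV>t)=\int\bbP(U>t/v)\,\bbP(V\in\dd v)$ (and its symmetric version) at the scale $v=\sqrt t$ and using Potter's bounds together with dominated convergence; each surviving mechanism produces a Breiman-type term and together they add up to the first line of the statement.

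The main obstacle is the last regime: $\alpha_0=\alpha_\infty=\alpha$ with both $\bbE[c_0^\alpha]$ and $\bbE[1/c_0^\alpha]$ divergent, where Breiman's lemma no longer applies --- this is precisely why the logarithmic-scale regularity of $L_0,L_\infty$ is imposed. The natural approach is to pass to logarithms by setting $X:=\log c_{-1}$ and $Y:=-\log c_0$, so that one must compute the right tail of $X+Y$, whose summand densities behave as $\alpha\e^{-\alpha v}\varphi_\infty(v)v^{\gamma_\infty}$ and $\alpha\e^{-\alpha v}\varphi_0(v)v^{\gamma_0}$. Writing out the convolution and substituting $y=r\log t$ for $r\in(0,1)$, the leading contribution factors as $\alpha\log t\cdot L_\infty(t)L_0(t)\,t^{-\alpha}$ times the beta integral $\int_0^1 r^{\gamma_0}(1-r)^{\gamma_\infty}\dd r=\Gamma(1+\gamma_0)\Gamma(1+\gamma_\infty)/\Gamma(2+\gamma_0+\gamma_\infty)$, which matches the second line of the statement. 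The restriction $\gamma_0,\gamma_\infty\neq -1$ is imposed to ensure that this beta integral is finite and non-degenerate and that the balanced region genuinely dominates. In practice, rather than redoing the calculation by hand I would verify that our hypotheses match those of~\cite[Cor.~5]{C86} and invoke the corollary as a black box, tracking the $\e^{\lambda\alpha}$ factor through the initial reduction.
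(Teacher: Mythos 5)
Your proposal follows the same approach as the paper, which supplies no proof of Proposition~\ref{prop:rho0} beyond the citation of \cite[Cor.~5]{C86}; the extra sketch you give of how the three regimes emerge (Breiman's lemma when one side has strictly heavier tail, a balanced Breiman-type split at $v=\sqrt{t}$ when $\alpha_0=\alpha_\infty$ and one $\alpha$-moment is finite, and the log-convolution/beta integral when both diverge) is sound and actually supplements what the paper writes.

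One point deserves a correction. Your reduction step asserts $\bbP(\e^{-\lambda}Z>t)=\bbP(Z>\e^{\lambda}t)\sim \e^{\lambda\alpha}\bbP(Z>t)$, but for $Z$ regularly varying of index $-\alpha$ one has
\[
\bbP(Z>\e^{\lambda}t)=L(\e^{\lambda}t)\,(\e^{\lambda}t)^{-\alpha}\sim \e^{-\lambda\alpha}\,L(t)\,t^{-\alpha}=\e^{-\lambda\alpha}\,\bbP(Z>t),
\]
so the deterministic tilt $\e^{-\lambda}$ produces a factor $\e^{-\lambda\alpha}$ in front of the slowly varying prefactor of $c_{-1}/c_0$, not $\e^{\lambda\alpha}$. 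Note that this is consistent with the paper's appendix formulas \eqref{rhosimple}--\eqref{rhowellandwall}, which place $\e^{\lambda\alpha}$ multiplying $\bbP(\rho_0>t)$ on the \emph{left}, i.e.\ $\bbP(\rho_0>t)\sim\e^{-\lambda\alpha}[\cdots]$. The $\e^{\lambda\alpha}$ appearing in the displayed statement of Proposition~\ref{prop:rho0} itself seems to carry the same sign slip, which you reproduced; it is harmless for the rest of the argument, since the constant only rescales $d_n$ and is absorbed into the scaling limit, but the justification you offer for it is still wrong as written and should read $\e^{-\lambda\alpha}$.
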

We stress that in the case $\ga_0=\ga_{\infty}$, the fact that  $L_{\infty}(\e^{t}),L_0(\e^{t})$ are regularly varying is crucial to obtain Proposition~\ref{prop:rho0}---however, the case $L_{\infty/0}(t)=\exp( \pm (\log t)^a)$ with $a\in (0,1)$ has to be excluded.
Some more comments on the tail of $\rho_0$ are made in Appendix~\ref{sec:rho}.

We are now ready to state our main theorem:
\begin{theorem}\label{thm:Biased2}
Suppose that Assumption~\ref{mainassumption} holds, and recall \eqref{tail:rho}.
Then, on the space $\mathbf{D}([0,1])$ of c\`adl\`ag functions $[0,1]\to \bbR$ with the uniform topology, we have the following convergence in distribution, as $n\to+\infty$: under $\P\otimes P^\go$ 
\[ \Big( \frac{X_{\lfloor un\rfloor}}{ n^{\ga}/ \psi(n) } \Big)_{u \in [0,1]} 
\	\Longrightarrow  \ \frac{\sin(\pi \ga)}{\pi\ga\bbE[\zeta^{\ga}] }  \big( (\mathcal{S}_{\ga})^{-1}(u) \big)_{u\in[0,1]} \,. \]
Here $(\mathcal{S}_{\ga}(u))_{u\geq 0}$ is an $\ga$-stable subordinator satisfying $\mathbf{E}[\e^{- t \mathcal{S}_{\ga}(u)}] = \e^{- u t^{\ga}}$, and $\zeta =\zeta_{\lambda}(\go)$ is the random variable defined in \eqref{def:zeta} (if $\bbE[c_0^{\ga}],\bbE[1/c_{0}^{\ga}]=+\infty$, we get that $\zeta=2$).
\end{theorem}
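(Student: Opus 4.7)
Following the classical strategy for sub-ballistic 1D RWRE \cite{KKS75,ESZ09a,ESTZ13}, I would first prove a functional stable limit theorem for the hitting-time process $T_n := \inf\{k \geq 0 : X_k = n\}$, and then invert it to obtain the scaling limit of $(X_n)$. Under $P^\go$ the increments $\tau_k := T_k - T_{k-1}$ are independent, and a classical recursion gives the explicit formula
\[ E^\go[\tau_k] \;=\; 1 + 2 \sum_{j=1}^\infty \prod_{i=1}^{j}\rho_{k-i} \;=\; 1 + \frac{2}{c_{k-1}}\sum_{j\geq 1} \e^{-\lambda j}\, c_{k-1-j}, \]
the second equality following by telescoping using $\rho_x = \e^{-\lambda}c_{x-1}/c_x$. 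By Proposition~\ref{prop:rho0}, the $j=1$ term $2\rho_{k-1}$ alone has heavy tail of index $\alpha$, and one can write $E^\go[\tau_k]\approx 2\rho_{k-1}\zeta_k + O(1)$, where $\zeta_k$ is a local functional of the environment strictly to the left of $k-1$, hence essentially independent of $\rho_{k-1}$. This identifies the random variable $\zeta$ appearing in the theorem, and one expects $E^\go[\tau_k]$ to inherit the same Pareto tail as $\rho_{k-1}$, inflated by the multiplicative factor $\bbE[\zeta^\alpha]$.

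Two steps then complete the argument. \emph{Step 1:} show that the quenched-mean process $M_n(u) := \sum_{k=1}^{\lfloor u n\rfloor}E^\go[\tau_k]$, properly rescaled by some $a_n$ of order $n^{1/\alpha}$ times a slowly varying correction (the de Bruijn conjugate associated with $\psi$), converges on $\mathbf{D}([0,1])$ to $\bbE[\zeta^\alpha]\,\mathcal{S}_\alpha(\cdot)$. This is a functional stable limit theorem for a stationary sequence with short-range dependence, which I would obtain by verifying that the macroscopic ``traps'' --- indices $k$ with $E^\go[\tau_k]$ of the scaling order --- are $\bbP$-a.s.\ well separated, so that their contributions are asymptotically independent and the associated point process of large values converges to the Poisson measure of intensity $\propto u^{-\alpha-1}du$ driving $\mathcal{S}_\alpha$. \emph{Step 2:} show that the quenched fluctuations $T_n - M_n$ are negligible at the same scale. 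Conditional on $\go$, each $\tau_k$ is a first-passage time of a reversible chain, for which $\mathrm{Var}^\go(\tau_k) \leq C\,(E^\go[\tau_k])^2$; a Chebyshev bound using $\alpha<1$ and conditional independence yields $(T_n - M_n)/a_n \to 0$ in $\bbP\otimes P^\go$-probability. Combining the two steps gives a stable subordinator limit for $T_{\lfloor n\cdot\rfloor}/a_n$, and inverting via Whitt's continuous-mapping theorem on $\mathbf{D}([0,1])$ yields the stated convergence for $X_{\lfloor u n\rfloor}/(n^\alpha/\psi(n))$; the constant $\sin(\pi\alpha)/(\pi\alpha\bbE[\zeta^\alpha])$ is the standard prefactor arising from the Laplace exponent of $\mathcal{S}_\alpha$.

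The main obstacle is Step 1, specifically identifying and separating traps simultaneously for the three regimes in Assumption~\ref{mainassumption} / Proposition~\ref{prop:rho0}. When $\bbE[c_0^\alpha]<\infty$ or $\bbE[1/c_0^\alpha]<\infty$, a single atypical value of $c_x$ is enough to produce a trap, either a \emph{well} (very large $c_x$) or a \emph{wall} (very small $c_x$), and the analysis nearly reduces to that of i.i.d.\ heavy-tailed summands with non-trivial $\zeta$. In the balanced case $\bbE[c_0^\alpha]=\bbE[1/c_0^\alpha]=+\infty$ (where necessarily $\alpha_0=\alpha_\infty$), the dominant traps are \emph{well-and-walls}: a large $c_{k-2}$ immediately followed by a small $c_{k-1}$, yielding the extra $\log t$ factor and the $\Gamma$-ratio of Proposition~\ref{prop:rho0}; consistently, the local factor $\zeta_k$ in this regime degenerates to the deterministic value $2$ (the $1/c_{k-2}$-weighted tail sum vanishes), matching the statement $\zeta=2$ in the theorem. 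Proving well-separation of such compound traps, and the correct factorisation of their joint contribution into an i.i.d.\ heavy-tailed point process, requires a careful coarse-graining of the environment into mesoscopic blocks of size much smaller than $a_n$, together with a two-point union bound ruling out interacting trap pairs within a block; this is where the bulk of the technical work lies.
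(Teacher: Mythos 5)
You follow the same overall program as the paper---prove a stable-subordinator limit for the hitting-time process $T_n$, then invert via Whitt's continuous-mapping theorem---and your identification of the trap types (wells, walls, well-and-walls) and of the local factor $\zeta$ is consistent with the paper. However, your Step~2 contains a genuine error. You claim that, conditionally on $\go$, $T_n$ concentrates around its quenched mean $M_n=E^\go[T_n]$ at scale $a_n$, invoking $\mathrm{Var}^\go(\tau_k)\le C\,(E^\go[\tau_k])^2$ together with Chebyshev. But this bound is essentially sharp, not loose: when crossing a deep trap, the number of failed attempts to pass the high edge is geometric with very small success probability, so the crossing time is roughly (mean time between attempts)$\times$(geometric count) and has standard deviation of the same order as its mean. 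Since the blocks are disjoint and the $\tau_k$ are conditionally independent, $\mathrm{Var}^\go(T_n)\asymp\big(\max_k E^\go[\tau_k]\big)^2\asymp a_n^2$, so Chebyshev gives $O(\gep^{-2})$, not $o(1)$. The quenched fluctuations of $T_n$ around $M_n$ are genuinely of order $a_n$ and cannot be discarded.

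This is precisely why the paper keeps the trajectory randomness explicit: Proposition~\ref{trappolone} approximates $T(\cB)/\rho_\cB$ not by its quenched mean $\xi_\cB=\theta_\cB/p_\cB$ but by $\xi_\cB\,\mathbf{e}_\cB$ with $\mathbf{e}_\cB\sim\mathrm{Exp}(1)$ independent of $\go$, and this exponential survives in the limiting Poisson point process (the $\e^{-r}\dd r$ marginal in \eqref{convergencesumK}), contributing $\int_0^\infty r^\ga\e^{-r}\dd r=\Gamma(1+\ga)$ to the constant $c_{\ga,\mu}=\bbE[\zeta^\ga]\Gamma(1+\ga)$. If your Step~2 were correct, the limit of $T_{\lfloor un\rfloor}/d_n$ would be $(\Gamma(1-\ga)\,\bbE[\zeta^\ga])^{1/\ga}\cS_\ga(u)$ rather than $(\Gamma(1+\ga)\Gamma(1-\ga)\,\bbE[\zeta^\ga])^{1/\ga}\cS_\ga(u)$, contradicting the very prefactor $\sin(\pi\ga)/(\pi\ga\,\bbE[\zeta^\ga])=(\Gamma(1+\ga)\Gamma(1-\ga)\bbE[\zeta^\ga])^{-1}$ you state in the theorem. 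The fix is to carry the (approximately exponential) quenched law of each big-trap crossing time through to the limit, as done in Sections~\ref{sec:crossingontraps} and~\ref{sec:conv}, rather than replacing it by its quenched expectation.
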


This scaling limit is similar to that found in  \cite{ESTZ13} (and \cite{Zin09}), but we stress that the trapping mechanism is different. 
Note also that the inverse of an $\alpha$-stable subordinator is easily shown to exhibit an aging phenomenon expressed in terms of generalized arcsine laws: for any fixed $h>1$, we have
\begin{equation}
\label{aging}
\lim_{t\to+\infty} \bbP\Big( (S_{\ga})^{-1}(th) = (S_{\ga})^{-1} (t) \Big)  = \frac{\sin(\pi \ga)}{ \pi} \int_{0}^{1/h} y^{-(1-\ga)} (1-y)^{-\ga} \dd y \, ,
\end{equation}
see e.g.~\cite{Ber99} (arcsine laws are shown for last-passage times, but the proofs easily adapt to the above statement).
An analogous aging result for the random walk $(X_n)_{n\in \N}$ should hold:
using the same techniques as in \cite{ESZ09a}, one could prove that, for any sequence $j_n\to+\infty$ and any $h>1$, $\bbP(  | X_{\lfloor h n \rfloor} - X_n| \leq j_n )$ converges as $n\to+\infty$ to the right-hand side of \eqref{aging}.
Our proof actually shows some \emph{localization} property of the walk, which is typically ``stuck'' at a trap, with a value of $\rho_x$ of order $d_n$ (see~\eqref{def:dn}) which is regularly varying with index $1/\ga$; this is the key idea to prove aging result.

\begin{rem}
In \cite{BS17}, we also considered the biased range-one Mott random walk in the sub-diffusive regime. This process is the simplified version of the Variable--Range  Hopping model used in Physics (cfr.~\cite{FGS16}), and essentially corresponds  to a random walk among random conductances with a different type of bias. In \cite{BS17} we identify the correct scaling exponent for the walk as a specific function of the field intensity $\lambda$ (in contrast with the BiRC where the scaling $\alpha$ does not depend on $\lambda$). The techniques developed in the present work would allow us to push this result further and obtain the scaling limit of the range-one Mott walk, but the result would be less interesting since in that model the conductances are bounded from above and hence only one kind of trap is present.
\end{rem}

In dimension $d\geq 2$, the rescaled sub-ballistic BiRC also converges to the inverse of a stable subordinator, as proven in  \cite{FK16}. In that case, the sub-ballistic behavior is due to very large conductances, the walk being trapped between the two endpoints of the edge.
On the other hand, small conductances do not represent a problem for the walk, which will typically go around them. In dimension $d=1$ however, the lattice geometry forces the walk to pass through each edge of the positive half line, so the walk can be slowed down both by large conductances (as in the higher dimensional case) and by very small, hard-to-overjump, conductances. Furthermore, in a particular range of the parameters, a new kind of trap arises, as the combination of a small conductance followed shortly after by a large conductance. The fine properties of the tails of $c_0$ at $+\infty$ and at $0$ dictate which kind of traps represent the biggest contribution to the slow-down.

\subsection{The different types of traps}\label{commentsontraps}

In his seminal work, Sinai \cite{Sinai82} described the notion of traps in the environment via ``valleys'' in the potential function $V$, defined by $V(x) = \sum_{i=1}^x \log \rho_i$ for $x\in \bbZ_+$ ($V(0)=0$ and a minus sign is added for $x\in \bbZ_-$).
In the case of i.i.d.\ $\go_i$, the potential $V$ is itself a random walk, and valleys in the potential $V$ are due to large regions where the sum of $\log \rho_i$'s is abnormally large, cf.~\cite{KKS75} (see also \cite{ESTZ13}). In that setting, if $H$ denotes the height of a valley in the potential (for a precise definition, see \cite[Sec.~3]{ESTZ13}), then a key result by Iglehart \cite{Igle72} gives that $\bbP(H>v) \sim C_{I} \e^{- \kappa v}$, for some specific $\kappa$ and some explicit constant $C_I$.

This is in sharp contrast with what happens in the BiRC setting: as already noticed in \cite[\S~1.3]{BS17},  we have here  $V(x) = \log c_0 - \lambda x - \log c_x $ (for $x\geq 1$), and valleys are caused by isolated large values of $V(x)-V(x-1) = \log \rho_x$.
Hence, Proposition~\ref{prop:rho0} is crucial in the understanding of the depth of traps, and the tail behavior  of $c_0$ and $1/c_0$ plays a key role in the deviations for $\log \rho_x$, and it gives rise to a much richer phenomenology than in the case of i.i.d.\ $\go_i$'s. In particular, the probability of observing a large valley in the potential behaves as $\bbP(\log \rho_0 > v) \sim \psi(\e^v) \e^{- \ga v}$.
The extra slowly varying function $\psi(\cdot)$ depends on the fine asymptotics of the tails \eqref{hyp:tail2} of $c_0$ and $1/c_0$, cf.~Proposition~\ref{prop:rho0}, and may be ruled by different types of mechanisms that need to be treated separately.
Propositions~\ref{prop:conditional1}-\ref{prop:conditional2} below consider the distribution of $c_{-1}, c_0$ conditionally on having $\rho_0 = \e^{-\lambda} c_{-1}/c_0>t$, as $t\to+\infty$: this enables us to understand whether large values of $\rho_x$ are typically due to large values of $c_{x-1}$ (wells), small values of $c_x$ (walls), or a combination of a large value of $c_{x-1}$ and of a small value of $c_{x}$ (well-and-walls), see Figure~\ref{figure1} for an illustration.

We will distinguish two cases.
\begin{custom}{``Simple Traps''}
\label{assumptionA}
The distribution of the conductances satisfy Assumption \ref{mainassumption}, and 
	$\E[c_0^\alpha]<\infty$ or $\E[1/c_{0}^\alpha]<\infty$.
\end{custom}

\begin{custom}{``Well-and-walls''}
\label{assumptionB}
The distribution of the conductances satisfy Assumption \ref{mainassumption}, and 
	$\E[c_0^\alpha]=\infty$ and $\E[1/c_{0}^\alpha]=\infty$.
\end{custom}

We state here some results that will be proven in Appendix~\ref{sec:rho}.
Let us introduce some notation. If $\bbE[c_{0}^{\ga}]<+\infty$, then  $\bar c_{-1}$ is a r.v.\ with c.d.f.\ $F_{\bar c_{-1}} (u) = \frac{1}{\bbE[c_{-1}^{\ga}]} \bbE[c_{-1}^{\ga} \ind_{\{c_{-1} \leq u\}}]$. If $\bbE[1/c_0^{\ga}]<+\infty$, then  $1/\bar c_0$ is a r.v.\ with c.d.f.\ $F_{1/\bar c_0} (u) = \frac{1}{\bbE[1/c_0^{\ga}]} \bbE[1/c_0^{\ga} \, \ind_{\{1/c_0 \leq u\}}]$.

\begin{proposition}
\label{prop:conditional1}
Suppose that Assumption~\ref{assumptionA} holds.
Then the distribution of $(c_{-1},1/c_0)$, conditionally on $\rho_0>t$, converges as $t \to+\infty$ to the r.v.
\begin{equation}
\label{limitconditional}
(1-B) \cdot (\bar c_{-1} , +\infty) + B \cdot (+\infty, 1/\bar c_0) \, ,
\end{equation}
where $B$ is a Bernoulli r.v.\ independent of $\bar c_{-1}, 1/\bar c_0$, with parameter $q\in [0,1]$. If $\bbE[1/c_{0}^{\ga}]=+\infty$, then $q=0$; if $\bbE[c_0^{\ga}] = +\infty$ then $q=1$; if $\bbE[c_{0}^{\infty}], \bbE[1/c_0^{\ga}] < +\infty$ ($\ga=\ga_{\infty}=\ga_0$) then $q= \lim_{t\to+\infty} \frac{\bbE[c_0^{\ga}] L_0(t)  }{\bbE[c_0^{\ga}] L_0(t) + \bbE[1/c_0^{\ga}] L_{\infty}(t)} $ (we assume that this limit exists, to avoid working with subsequences).
\end{proposition}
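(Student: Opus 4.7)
The plan is to compute the joint tails $\bbP(\rho_0 > t, c_{-1} \leq a)$ and $\bbP(\rho_0 > t, 1/c_0 \leq b)$ directly from Assumption~\ref{mainassumption}, then divide by the one-dimensional tail $\bbP(\rho_0 > t) \sim \psi(t) t^{-\alpha}$ furnished by Proposition~\ref{prop:rho0}. Conditioning on $c_{-1}$ and using the left-tail assumption on $c_0$ gives
\begin{align*}
\bbP(\rho_0 > t, c_{-1} \leq a) &= \int_0^a \bbP(1/c_0 > e^{\lambda} t/s)\dd F_{c_{-1}}(s) \\
&= e^{-\lambda \alpha_0} L_0(t) t^{-\alpha_0} \int_0^a s^{\alpha_0}\frac{L_0(e^{\lambda} t/s)}{L_0(t)}\dd F_{c_{-1}}(s).
\end{align*}
Since $L_0(e^{\lambda} t/s)/L_0(t)\to 1$ pointwise and Potter's bound supplies a uniform dominating function, dominated convergence yields
\begin{equation*}
\bbP(\rho_0 > t, c_{-1} \leq a) \sim e^{-\lambda \alpha_0} L_0(t) t^{-\alpha_0}\, \bbE\bigl[c_{-1}^{\alpha_0} \ind_{\{c_{-1} \leq a\}}\bigr],
\end{equation*}
and symmetrically
\begin{equation*}
\bbP(\rho_0 > t, 1/c_0 \leq b) \sim e^{-\lambda \alpha_\infty} L_\infty(t) t^{-\alpha_\infty}\, \bbE\bigl[(1/c_0)^{\alpha_\infty} \ind_{\{1/c_0 \leq b\}}\bigr].
\end{equation*}

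Dividing by $\bbP(\rho_0 > t)$ and running through the three sub-cases of Assumption~\ref{assumptionA} yields the claim. If $\bbE[1/c_0^{\alpha}] = +\infty$, then necessarily $\alpha = \alpha_0 < \alpha_\infty$, so Proposition~\ref{prop:rho0} gives $\psi(t)$ of order $L_0(t)$; the first ratio converges to $F_{\bar c_{-1}}(a)$, while the second carries a vanishing extra factor $t^{\alpha-\alpha_\infty} L_\infty(t)/L_0(t)$, so $1/c_0 \to +\infty$ in probability and $q = 0$. The case $\bbE[c_0^{\alpha}] = +\infty$ is symmetric and gives $q = 1$. When both partial moments are finite (forcing $\alpha_0 = \alpha_\infty = \alpha$), both ratios contribute non-trivially in the limit; letting $a, b \to +\infty$ the two numerators sum to $\psi(t)$ (up to the common factor $e^{-\lambda\alpha}t^{-\alpha}$) to leading order, thereby confirming existence of the limit and yielding the announced Bernoulli weight $q$, with the conditional distributions within each branch converging to $\bar c_{-1}$ and $1/\bar c_0$ respectively.

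The main technical step is the dominated convergence justification, which requires uniform control of $L_0(e^{\lambda} t/s)/L_0(t)$ as $s \to 0^+$: Potter's bound delivers $L_0(e^{\lambda} t/s)/L_0(t) \leq C_\delta s^{-\delta}$ for any $\delta>0$ and $t$ large enough, and since $F_{c_{-1}}$ has a density of order $s^{\alpha_0-1} L_0(1/s)$ near $0$ under Assumption~\ref{mainassumption}, the resulting integrand $s^{\alpha_0-\delta}$ is integrable once $\delta<2\alpha_0$. A secondary point is ruling out extra mass in the ``doubly large'' region $\{c_{-1}>a,\ 1/c_0>b\}$: this is automatic, as the sum of the two marginal contributions already matches $\bbP(\rho_0>t)$ to leading order as $a,b\to+\infty$, leaving no room for additional asymptotic mass.
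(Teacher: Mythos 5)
Your proof follows essentially the same route as the paper's (Appendix B.1): condition on $c_{-1}$ (resp.\ $1/c_0$), use the left- (resp.\ right-) tail of $c_0$, and divide by the marginal tail $\bbP(\rho_0>t)\sim\psi(t)t^{-\alpha}$. The paper avoids your dominated-convergence step by first restricting to a compact interval $c_{-1}\in(a,b]$, where uniformity of the slowly varying ratio is immediate, and then letting $b\to\infty$ at the end; your direct integration over $(0,a]$ with Potter's bound is an equivalent way to obtain the same asymptotic, and your treatment of the three sub-cases and of the ``doubly large'' region matches the paper's. One small inaccuracy worth noting: Assumption~\ref{mainassumption} prescribes only the distribution function of $c_0$, not a density, so the assertion that $F_{c_{-1}}$ ``has a density of order $s^{\alpha_0-1}L_0(1/s)$'' is unwarranted. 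The quantity you actually need, $\bbE\big[c_{-1}^{\alpha_0-\delta}\ind_{\{c_{-1}\le a\}}\big]<\infty$, can be obtained directly from the left tail $\bbP(c_{-1}<\varepsilon)=L_0(1/\varepsilon)\varepsilon^{\alpha_0}$ (e.g.\ by integration by parts), and in fact for $\delta<\alpha_0$ it is trivially finite since the integrand is bounded on $(0,a]$, so no density assumption is needed.
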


Proposition \ref{prop:conditional1} tells that under Assumption~\ref{assumptionA}, only two types of traps can occur: conditionally on having a large trap (we postpone the precise definition of traps to Definitions \ref{simpletraps} and \ref{ktraps}, but one can just think of having $\rho_x$ large), then either

\begin{itemize}
\item[(i)] $B=0$, $c_{x-1} \asymp 1$, $c_x \ll 1$, corresponding to a \emph{wall} in the potential~$V$;

\item[(ii)] $B=1$, $c_{x-1} \gg 1$, $c_{x} \asymp 1$, corresponding to a \emph{well}  in the potential~$V$.
\end{itemize}

\noindent
If $q=0$ (e.g.\ if $\bbE[1/c_0^{\ga}] =+\infty$), then $B=0$ and only walls can occur. If $q=1$ (e.g.\ if $\bbE[c_0^{\ga}] =+\infty$) then only wells can occur. If $q\in(0,1)$ (e.g.\ if $\ga =\ga_{\infty} = \ga_0$ with $L_0(t) \sim c L_\infty(t)$ for some constant $c>0$), then both walls or wells may occur, with respective probability $1-q$ and $q$, but not simultaneously.

\begin{proposition}
\label{prop:conditional2}
Suppose that Assumption~\ref{assumptionB} holds.
Then the distribution of $(c_{-1},1/c_0)$, conditionally on $\rho_0>t$, converges as $t \to+\infty$ to the r.v.\ $(+\infty,+\infty)$.
\end{proposition}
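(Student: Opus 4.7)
The goal is to show that under Assumption~\ref{assumptionB}, $(c_{-1}, 1/c_0)$ conditioned on $\{\rho_0 > t\}$ converges in distribution to $(+\infty,+\infty)$. By a union bound, this amounts to showing that for each fixed $M>0$ both $\P(c_{-1}\leq M \mid \rho_0>t)$ and $\P(1/c_0 \leq M \mid \rho_0>t)$ vanish as $t\to\infty$.

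For the first quantity the plan is to use the independence of $c_{-1}$ and $c_0$, together with the deterministic observation that $\{c_{-1}\leq M\}\cap\{\rho_0>t\}\subseteq\{c_0 < M\e^{-\lambda}/t\}$, to get
\[
\P(c_{-1}\leq M, \rho_0>t) \;\leq\; \P(c_{-1}\leq M)\cdot \P\big(c_0 < M\e^{-\lambda}/t\big) \;\sim\; (M\e^{-\lambda})^{\alpha_0} L_0(t)\, t^{-\alpha_0},
\]
by~\eqref{hyp:tail2}. Since $\bbE[c_0^{\alpha}]=+\infty$ and $\bbE[1/c_0^{\alpha}]=+\infty$ under Assumption~\ref{assumptionB}, the relevant asymptotic for the denominator is the bottom line of Proposition~\ref{prop:rho0}, of order $(\log t)L_0(t)L_\infty(t) t^{-\alpha}$. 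Dividing, one obtains the bound $C_M/\big((\log t)L_\infty(t)\big)$ on the conditional probability. The symmetric estimate for $1/c_0$ proceeds by observing $\{1/c_0\leq M\}\cap\{\rho_0>t\}\subseteq\{c_{-1}>\e^\lambda t/M\}$ and yields, after the same manipulation, a bound of order $1/\big((\log t)L_0(t)\big)$.

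The crucial step is to verify that both $(\log t)L_\infty(t)$ and $(\log t) L_0(t)$ diverge. Here the infinite-moment hypotheses play a subtle role. Since $L_\infty(t) = \varphi_\infty(\log t)(\log t)^{\gamma_\infty}$ with $\gamma_\infty\neq -1$, the change of variable $u=\log t$ in $\bbE[c_0^\alpha]=\alpha\int_0^\infty t^{\alpha-1}\P(c_0>t)\,dt$ reduces the divergence $\bbE[c_0^\alpha]=+\infty$ to that of $\int^\infty \varphi_\infty(u)\, u^{\gamma_\infty}\,du$, which together with $\gamma_\infty\neq -1$ forces $\gamma_\infty > -1$; hence $(\log t)L_\infty(t)\sim\varphi_\infty(\log t)(\log t)^{1+\gamma_\infty}\to\infty$. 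The analogous computation using $\bbE[1/c_0^\alpha]=+\infty$ and $\gamma_0\neq -1$ yields $\gamma_0 > -1$ and handles $(\log t)L_0(t)$.

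The substantive content is conceptual rather than technical: under Assumption~\ref{assumptionB} neither $c_{-1}$ alone nor $1/c_0$ alone is heavy-tailed enough to generate most of the mass of $\{\rho_0>t\}$, and the extra logarithmic factor in the product case of Proposition~\ref{prop:rho0} precisely quantifies the ``diagonal'' contribution where both factors are simultaneously large. The only place where one has to be careful is the extraction of the sign of $\gamma_0-(-1)$ and $\gamma_\infty-(-1)$ from the infinite $\alpha$-moments; once these boundary values are in hand, everything reduces to the tail estimates of Assumption~\ref{mainassumption}.
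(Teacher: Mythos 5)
Your proof is correct and follows the same strategy as the paper's, with one welcome improvement. The paper obtains the sharp asymptotic $\bbP(c_{-1}\leq b,\,\rho_0>t)=(1+o(1))\,\e^{-\lambda\ga_0}L_0(t)t^{-\ga_0}\bbE[c_{-1}^{\ga_0}\ind_{\{c_{-1}\leq b\}}]$ by conditioning on $c_{-1}$, exactly as in the proof of Proposition~\ref{prop:conditional1}; you instead use the elementary set inclusion $\{c_{-1}\leq M\}\cap\{\rho_0>t\}\subseteq\{c_{-1}\leq M\}\cap\{c_0<M\e^{-\lambda}/t\}$ together with the independence of $c_{-1}$ and $c_0$, which yields a cruder upper bound of the same order $L_0(t)t^{-\ga_0}$ --- entirely sufficient here, since only the order of magnitude relative to $\bbP(\rho_0>t)\sim c\,(\log t)L_0(t)L_\infty(t)t^{-\ga}$ matters. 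The more substantive difference is that the paper's proof simply asserts ``$\gamma_\infty,\gamma_0>-1$'' without justification, whereas you correctly derive these inequalities from $\bbE[c_0^\ga]=\bbE[1/c_0^\ga]=+\infty$ and the standing assumption $\gamma_0,\gamma_\infty\neq-1$, by reducing the divergent $\ga$-moment to the divergence of $\int^\infty\gp_\infty(u)u^{\gamma_\infty}\,\dd u$ (and its analogue for $\gp_0$). Since this is exactly what makes $(\log t)L_\infty(t)$ and $(\log t)L_0(t)$ tend to infinity, and hence makes the conditional probabilities vanish, spelling it out as you do is the right call.
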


Proposition \ref{prop:conditional2} tells that under Assumption~\ref{assumptionB}, only one type of traps can occur: conditionally on having a large trap (say $\rho_x$ large), then we necessarily have

\begin{itemize}
\item[(iii)] $c_{x-1} \gg 1$, $c_x\ll 1$, corresponding to a \emph{well-and-wall} in the potential~$V$.
\end{itemize}

\noindent
It turns out that under Assumption~\ref{assumptionB}, also ``$k$-distant'' well-and-walls traps may occur, with $k\geq 1$:
they consist of the combination of a large conductance $c_{x-1}$ followed shortly after by a small conductance $c_{x+k}$ 
(this makes $\rho_x^{(k)}:= \rho_x \cdots \rho_{x+k} = \e^{-\lambda(k+1)} c_{x-1}/c_{x+k}$ large).

\begin{figure}[h!]
\begin{center}
	\setlength{\unitlength}{0.12\textwidth}  
	\begin{picture}(7.5,2.1)(0,0)
		\put(0,1.5){\includegraphics[scale=0.75]{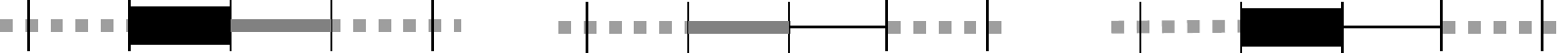}}
		\put(.4,1.36){\small $x\!-\!1 \ \ \;\;x \ \ \ \,\, x\!+\!1$}
		\put(3,1.36){\small $x\!-\!1 \ \ \;\;x \ \ \ \,\, x\!+\!1$}
		\put(5.6,1.36){\small $x\!-\!1 \ \ \;\;x \ \ \ \,\, x\!+\!1$}
		\put(0.58,2.05){ $\gg \! 1 \ \; O(1) \ $}
 		\put(0.69,1.8){ $ \downarrow\quad \ \ \; \downarrow$}
 		\put(3.05,2.05){\;\;$O(1) \ \ll \! 1$}
 		\put(3.27,1.8){ $\downarrow\quad \ \  \downarrow$}
		\put(5.76,2.05){ $\gg\! 1 \ \; \ll\! 1 $}
 		\put(5.88,1.8){ $\downarrow\quad \ \ \; \downarrow$}
 		\put(0.2,0){\includegraphics[scale=1.1]{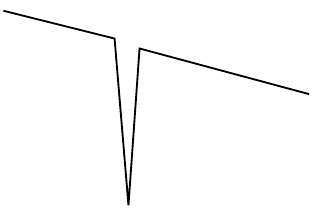}}
 		\put(0.25,.6){\footnotesize a \emph{well}}
 		\put(2.8,-.1){\includegraphics[scale=1.1]{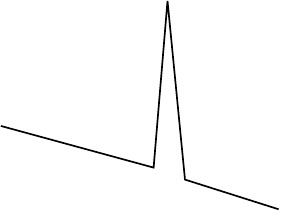}}
 		\put(2.85,.6){\footnotesize a \emph{wall}}
 		\put(5.2,-.2){\includegraphics[scale=1.1]{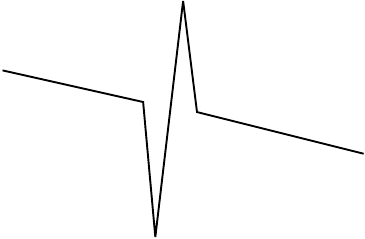}}
 		\put(4.8,.4){\footnotesize a \emph{well-and-wall}}
	\end{picture}
\end{center}
\caption{
\textit{\footnotesize
The three types of traps are represented above, together with the shape of the potential $x\mapsto V(x)$ associated to each type of trap (recall $V(x) = \log c_0 -\lambda x - \log c_x$). A trap, which roughly speaking is a large value of $\log \rho_x$, can occur in three different ways (from left to right): under Assumption~\ref{assumptionA}, we  have either (i)  $c_{x-1}\! \gg\! 1$, $c_x \!\asymp\! 1$ well-shaped trap, or (ii) $c_{x-1}\! \asymp \! 1$, $c_x \! \ll \! 1$ wall-shaped trap; under Assumption~\ref{assumptionB}, we have only (iii) $c_{x-1} \! \gg \! 1$, $c_x  \! \ll \! 1$ well-and-wall-shaped trap. 
}
}
 \label{figure1} 
\end{figure}

\subsection{Organization of the paper and overview of the proof}

Let us present briefly how the rest of the paper is organized.
In Section~\ref{sec:prelim}, we give some preliminary results: 
we define first-passage times $T_n$, and then state the convergence of the passage-time process towards an $\alpha$-stable subordinator, see  Theorem~\ref{thm:passagetime}. We prove then that the random walk cannot backtrack more than $C \log n$ and conclude the section by showing how Theorem~\ref{thm:Biased2} follows from Theorem~\ref{thm:passagetime}.

The rest of the paper is devoted to the proof of Theorem~\ref{thm:passagetime}: in Section~\ref{sec:def} we give the  precise definition of simple traps and list some important properties. In Section~\ref{sec:reduc1} to~\ref{sec:conv}, we prove Theorem~\ref{thm:passagetime} under Assumption~\ref{assumptionA} (we detail the scheme of the proof below). In Section~\ref{sec:wellandwalls}, we define well-and-wall traps, list their properties and adapt the proof of Theorem \ref{thm:passagetime} to Assumption~\ref{assumptionB}: on the one hand we need to additionally deal with $k$-distant traps, but on the other hand some simplifications occur (due to the fact that $\zeta=2$ in Theorems~\ref{thm:Biased2}-\ref{thm:passagetime}).

Some technical results are collected in the Appendix: in Appendix~\ref{app:formulas} we recall useful formulas for resistor networks; in Appendix~\ref{sec:rho} we deal with the tail of $\rho_0$ and prove some consequences, such as Propositions~\ref{prop:conditional1}-\ref{prop:conditional2}; finally  in Appendix~\ref{dimostrazionitrappole} we collect the proofs of the properties of the traps presented in Sections~\ref{sec:def} and ~\ref{sec:wellandwalls}.

\smallskip

Let us now sketch how the proof of Theorem~\ref{thm:passagetime} works  in the case of Assumption~\ref{assumptionA}, \textit{i.e.}\ we give a summary of Sections~\ref{sec:reduc1}--\ref{sec:conv}.

\begin{description}
\item[\textbf{Part 1}] Section~\ref{sec:reduc1}.
First, we divide the interval $[0,n]$ into blocks of size $C\log n$ and show that the main contribution to the first-passage time $T_n$ comes from so-called \textit{trapping} blocks, which contain a (unique) large value of $\rho_{x}$ (see Section~\ref{sec:def} for a definition).
In Proposition~\ref{blocchetti}, we show that with high $\bbP\otimes P^{\go}$-probability, $T_n \simeq  \sum_{j \in \cJ_n} T({\cB_j})$ where $\cJ_n$ is the set of indices of trapping blocks, and $T(\cB_j)$ is the crossing time of such a block.

\item[\textbf{Part 2}] Section~\ref{sec:crossingontraps}.
Our second step consists in identifying the trapping mechanism on trapping blocks.
We show that, if $\cB$ is a (good) trapping block, then the time to cross the block $\cB$ is dominated by the time to overcome the deep trap where $\rho_x$ is very large: we prove in Proposition~\ref{trappolone} that $T(\cB)/\rho_x \simeq \tau_{\cB}$ with high probability.
Here, $\tau_{\cB}$ is a random variable associated with the environment in the block $\cB$ around $x$.
In particular, to overcome a trap, one needs to account for

- the number of attempts to cross the edge $(x,x+1)$;

- the time between two attempts at crossing the edge $(x,x+1)$;

- the number of times the random walk falls anew in the trap.

\noindent
This is why we write $\tau_{\cB}$ as  $\theta_{\cB}\sum_{j=1}^G E_j$ (cfr.~ \eqref{semplificare}), where $\theta_{\cB} = \theta_{\cB}(\go)$ is the average time between two attempts to cross $(x,x+1)$; $G$ is the geometric random variable 
of parameter $p_{\cB}=p_{\cB}(\go)$ counting the number of times we fall anew in the trap ($p_{\cB}$ is the probability of never returning to $x$, starting from $x+1$); $(E_j)_{j\geq 1}$ are i.i.d.\ exponential random variables of parameter $1$, which approximate the geometric number of attempts to cross $(x,x+1)$, renormalized by $\rho_x$.
Let us stress here that if $c_{x-1} \gg 1$ (well), then $\theta_{\cB}\approx 2$, while if $c_x\ll 1$ (wall), then $p_{\cB} \approx 1$. In the case \ref{assumptionB} of Section \ref{sec:wellandwalls}, we will therefore show that $T(\cB)/\rho_x \simeq 2 \mathbf{e}_{\cB}$, where $\mathbf{e}_{\cB}$ is just one exponential random variable of parameter $1$.

\item[\textbf{Part 3}] Section~\ref{sec:reduc2}.
We prove that the crossing time of all good blocks is dominated by the crossing times of the blocks with values of $\rho_{x} \geq  \gep d_n$,  cf.~Proposition~\ref{prop:fewblocks}, up to an error $\eta$ (that can be made arbitrarily small by taking $\gep$ small). Here $d_n$ is roughly the order of the deepest trap between $0$ and $n$, cf.~\eqref{def:dn}.
The main technical difficulty is that one needs to exclude the possibility that many smaller traps with $\rho_x < \gep d_n$ would slowdown a lot the random walk: in particular, we need to have a (uniform) control on the tail of $\tau_{\cB}$ conditionally on having $\rho_{x}$ large (cf.~Lemma~\ref{lem:tauB}), in order to be sure that the sum of $\tau_{\cB}$ on blocks with $\rho_x < \gep d_n$ cannot be large.

\item[\textbf{Part 4}] Section~\ref{sec:conv}.
Finally, we prove the convergence for the first-passage time reduced to blocks with $\rho_{x} \geq  \gep d_n$.
We show that the positions, depths and crossing times $\tau_{\cB}$ of blocks with $\rho_{x} \geq  \gep d_n$ converge to a Poisson Point Process, with an explicit intensity, cf.~Proposition~\ref{prop:convergence}:
this allows us to show that $T_n/d_n$ converges to a stable distribution. We then conclude proof of Theorem \ref{thm:passagetime} by proving the convergence of the whole process to an $\ga$-stable subordinator.

\end{description}

\section{Preliminaries: relation between $(X_n)_{n\in \N}$ and first-passage times}
\label{sec:prelim}

A classical way of analyzing the properties of the random walk $(X_n)_{n\in \N}$ is to study its first-passage times.
Let
\begin{equation}
T_n :=\inf\{j\geq 1, X_j=n\}
\end{equation}
be the first-passage time to $n$ of the walk.
Recall Proposition~\ref{prop:rho0} and  define the sequence $d_n$, up to asymptotic equivalence, by
\begin{equation}
\label{def:dn}
\bbP\big( \rho_0 > d_n \big) \sim  \psi(d_n) d_n^{-\ga} \sim  1/n \quad \text{ as } n\to \infty ,
\end{equation}
so that $d_n$ corresponds to the order of $\max_{1\leq x \leq n} \rho_x$.
We see that $d_n$ is a regularly varying function with index $\gamma=1/\ga$.
The core of the paper is the proof of the following result.

\begin{theorem}
	\label{thm:passagetime}
	Suppose that Assumption~\ref{mainassumption} holds. Then, on the space $\mathbf{D}([0,1])$ of c\`adl\`ag functions $[0,1]\to \bbR$ with Skorohod $M_1$-topology, we have the following convergence in distribution, as $n\to+\infty$: under $\bbP\otimes P^{\go}$ we have
	\begin{align*}
	\Big(\frac{T_{\lfloor un\rfloor}}{d_n}\Big)_{u \in [0,1]}  \  \Longrightarrow \ \Big( \frac{\pi \ga \bbE[\zeta^{\ga}]}{\sin(\pi \ga)} \Big)^{1/\ga} \big( \mathcal{S}_{\ga}(u) \big)_{u\in[0,1]}  \, .
	\end{align*}
	where $( \mathcal{S}_{\ga}(u))_{u\geq 0}$ is an $\alpha$-stable subordinator which statisfies $\mathbf{E}[e^{-t \cS_{\ga}(u)}] = \e^{- u t^{\ga}}$, and $\zeta$~is a r.v.\ defined in \eqref{def:zeta}.
\end{theorem}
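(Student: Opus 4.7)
My plan follows the four-part scheme outlined in the overview. First I would partition $[0,n]$ into $\lfloor n/(C\log n)\rfloor$ contiguous blocks of size $C\log n$ with $C$ a large constant. Call a block \emph{trapping} if it contains exactly one site $x$ with $\rho_x \geq (\log n)^{A}$ for a suitably chosen $A$, and \emph{good} if in addition its nearest trapping neighbour is far enough. By a union bound and Proposition~\ref{prop:rho0}, two large values of $\rho$ inside the same block or in consecutive blocks happen with vanishing probability, so $\bbP$-a.s.\ all trapping blocks are good. On non-trapping blocks, all conductances $c_y$ and $1/c_y$ are bounded by $(\log n)^{A}$, hence the quenched expected crossing time, computed from the resistance formulas in Appendix~\ref{app:formulas}, is at most $(\log n)^{2A+2}$; since there are $O(n/\log n)$ such blocks and $d_n$ is regularly varying of index $1/\alpha>1$, their aggregated contribution is $o(d_n)$ in probability. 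This reduces Theorem~\ref{thm:passagetime} to controlling $\sum_{j\in\cJ_n} T(\cB_j)$.

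Second, I would analyse the crossing time of a single trapping block $\cB$ containing a distinguished site $x$ with $\rho_x$ large. Decomposing the walk according to the successive returns to $x+1$ before finally escaping to the right, the crossing factorises: up to a negligible remainder one has $T(\cB)\approx \rho_x\,\tau_{\cB}$ with $\tau_{\cB}=\theta_{\cB}\sum_{j=1}^{G}E_j$, where $\theta_{\cB}(\omega)$ is the expected round-trip time on the left portion of the block (close to $2$ when $c_{x-1}\gg 1$, close to $1$ when $c_{x-1}=O(1)$), $G$ is a Geometric random variable of parameter $p_{\cB}(\omega)$ counting the number of re-visits to $x$ (close to $1$ when $c_{x}\ll 1$), and $(E_j)$ are i.i.d.\ Exp$(1)$ approximating the rescaled Bernoulli number of attempts to traverse the biased edge $(x,x+1)$. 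Quenched Chebyshev bounds on the variance, again computed via the resistance formulas of Appendix~\ref{app:formulas}, would promote this to a high-probability concentration around $\rho_x\tau_{\cB}$.

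Third comes the truncation. Let $\cJ_n^{\varepsilon}$ denote the set of good trapping blocks with $\rho_x\geq\varepsilon d_n$. By \eqref{def:dn} the number of such blocks in $[0,n]$ is of order $\varepsilon^{-\alpha}$ and is asymptotically Poisson, so these carry a non-trivial stable limit. The hard part is to show that the tail contribution $\sum_{j\notin\cJ_n^{\varepsilon}}\rho_{x_j}\tau_{\cB_j}$ can be made arbitrarily small in probability by choosing $\varepsilon$ small, uniformly in $n$. For this I would need a \emph{uniform in $t$} exponential tail bound $\bbP(\tau_{\cB}>s\mid \rho_x=t)\leq C e^{-c s}$. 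Such an estimate comes from the geometric decomposition above together with the description of the conditional law of $(c_{x-1},c_x)$ given $\rho_x$ large, namely Propositions~\ref{prop:conditional1}--\ref{prop:conditional2}: these guarantee that $\theta_{\cB}$ and $1-p_{\cB}$ stay stochastically bounded above/below. Combining the tail $\bbP(\rho_0>t)\sim\psi(t)t^{-\alpha}$ with this tail control and a truncated second-moment estimate on $\mathbf{1}_{\{\rho_x<\varepsilon d_n\}}\rho_x\tau_{\cB}$, one obtains $\limsup_{n}\bbP\bigl(\sum_{j\notin\cJ_n^\varepsilon}\rho_{x_j}\tau_{\cB_j}>\eta d_n\bigr)\to 0$ as $\varepsilon\to 0$.

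Finally, I would prove the stable-subordinator convergence by a point-process argument. The measure $\sum_{j\in\cJ_n}\delta_{(x_j/n,\,\rho_{x_j}\tau_{\cB_j}/d_n)}$ restricted to second coordinate $\geq\varepsilon$ converges to a Poisson point process on $[0,1]\times[\varepsilon,\infty)$ whose intensity is computed by combining the regular variation $\bbP(\rho_0>td_n)/\bbP(\rho_0>d_n)\to t^{-\alpha}$ with the limit law of $\tau_{\cB}$ given $\rho_x$ large; the resulting L\'evy measure matches an $\alpha$-stable subordinator with prefactor $\bigl(\pi\alpha\bbE[\zeta^\alpha]/\sin(\pi\alpha)\bigr)^{1/\alpha}$, where $\zeta$ encodes the conditional mean of $\tau_{\cB}$ (equal to $2$ under Assumption~\ref{assumptionB}, hence the simplification mentioned in the overview). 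Functional convergence of the partial sums then follows by a standard continuous-mapping argument. I expect the main technical obstacle, apart from the uniform tail control in Part~3, to be the topological subtlety forcing the use of the Skorohod $M_1$-topology rather than $J_1$: inside each block of size $C\log n$, the time process $u\mapsto T_{\lfloor un\rfloor}/d_n$ rises continuously along a ramp of horizontal width $(\log n)/n$ and vertical height $\rho_x\tau_{\cB}/d_n$, which collapses to a jump only in $M_1$. Tightness in $M_1$ reduces to ruling out upward oscillations within a block, which is guaranteed by the Part~2 concentration and the non-backtracking bound $\sup_{k\leq T_n}(X_n-X_k)\leq C\log n$ stated at the beginning of Section~\ref{sec:prelim}.
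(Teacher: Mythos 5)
Your four-part plan tracks the paper's own roadmap closely, and Parts~2 and~4 are essentially right in spirit, but there are two genuine gaps in the middle of the argument that would break it as written.

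First, Part~1 contains a non-sequitur. You declare a block trapping if it has a site with $\rho_x\geq(\log n)^A$, and then assert that ``on non-trapping blocks, all conductances $c_y$ and $1/c_y$ are bounded by $(\log n)^A$''. This does not follow: $\rho_x=\e^{-\lambda}c_{x-1}/c_x$ is a ratio of consecutive conductances, so bounding every $\rho_x$ in a block bounds only the consecutive ratios and puts no constraint whatsoever on the absolute size of individual $c_y$'s. What actually enters the crossing-time formula \eqref{realmagic} is $\rho_z^{(k)}=\prod_{j=0}^k\rho_{z+j}$, which under your bound could still be as large as $(\log n)^{A(k+1)}$, hence comparable to $d_n$ for $k$ of order $\log n/\log\log n$ — well within a block of size $C\log n$. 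This is precisely the scenario of a ``well-and-wall'' pair with all intermediate sites unexceptional, and it cannot be dismissed by a pointwise bound on $\rho_x$. The paper rules this out only via Lemmas~\ref{annoying} and~\ref{annoying2}, which need the assumption-dependent tail arithmetic of Appendix~\ref{sec:rho}. Your Part~1 either needs a trapping criterion built around $\rho_x^{(k)}$ (or around individual $c_y, 1/c_y$), or it needs an analogue of those lemmas; as stated the reduction is not justified.

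Second, the ``uniform in $t$ exponential tail bound $\bbP(\tau_\cB>s\mid\rho_x=t)\leq C\e^{-cs}$'' that Part~3 hinges on is false under Assumption~\ref{assumptionA}, and the claim that Propositions~\ref{prop:conditional1}--\ref{prop:conditional2} keep $\theta_\cB$ and $1-p_\cB$ ``stochastically bounded above/below'' is not what those propositions say. Under Assumption~\ref{assumptionA}, conditionally on a deep trap only \emph{one} of $c_{x-1}$, $1/c_x$ blows up: if the trap is a well ($B=1$), then $\theta_\cB\to 2$ but $1/p_\cB\to 1+\bar c_0 V$ with $V=\sum_{j\geq 1}c_j^{-1}\e^{-\lambda(j+1)}$, which has regularly varying tail of index $\alpha_0$; symmetrically for a wall $\theta_\cB$ stays heavy-tailed through $W$. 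Hence $\xi_\cB=\theta_\cB/p_\cB$, and therefore $\tau_\cB=\xi_\cB\,\mathbf e_\cB$, have only a polynomial tail — this is the content of Lemma~\ref{lem:tauB}. The paper's truncation argument (Proposition~\ref{prop:fewblocks}) works because $\alpha<1$ and $\bbE[\zeta^\alpha]<\infty$, not because of exponential concentration, and its proof needs the dyadic decomposition plus the delicate cases \textbf{a)}/\textbf{b)} around \eqref{casega0=gainfty}. Your plan as written would only go through under Assumption~\ref{assumptionB}, where $\theta_\cB\to 2$ and $p_\cB\to 1$ are genuinely bounded.

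Finally, a smaller issue: even accepting that every deep trap lies in a block you flag, Part~2's decomposition is anchored at the single site $x$ with $\rho_x$ large, which is adequate for simple traps and for $k=0$ well-and-walls. For $k$-distant well-and-walls with $k\geq 1$ (which occur generically under Assumption~\ref{assumptionB}, cf.\ Lemma~\ref{k-is-small}), the escape point is $x+k$, not $x$: the quantities $p_\cB$ and $\theta_\cB$ must be redefined around $x+k$ to recover the clean limit $\tau_\cB=2\mathbf e_\cB$, and the relevant depth is $\rho_x^{(k)}$ rather than $\rho_x$. The paper devotes Section~\ref{sec:wellandwalls} to exactly this restructuring; a proof that omits it would underestimate the crossing time by the factor $\rho_x^{(k)}/\rho_x$.
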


We use here Skorohod $M_1$-topology, which is weaker than the $J_1$-topology (roughly speaking, it allows for \emph{intermediate jumps}), since it will be sufficient for our purpose. We refer to \cite{Whitt02} for a detailed account on the $M_1$- and $J_1$-topologies on $\mathbf{D}$.

Before showing how to derive Theorem~\ref{thm:Biased2} from Theorem~\ref{thm:passagetime}, we need the following proposition, which slightly improves Proposition~2.1 in \cite{BS17}: we prove that the walk $(X_j)_{j \geq 0}$ cannot backtrack more than $C \log n$ before reaching distance $n$. We will use this property several times in the proof of the main theorem, but it  already tells us that the map $n \mapsto T_n$ is the inverse of the map $j \mapsto X_j$, up to an error of at most a constant times $\log n$.

\begin{proposition}\label{kingofthenorth}
	There exists $C>0$ such that the following holds:
define the events $\bar A_x^n :=\bigcap_{j\geq 0}\{ X_{T_{x} +j} \geq x - C \log n \}$  that the random walk does not backtrack more than $C \log n$ after having reached $x$.
Then $\P\otimes P^{\go}$--almost surely there exists $n_0\in\N$ such that, for all $n\geq n_0$, $\bar A_x^n$ holds for all $0\leq x\leq n$.
\end{proposition}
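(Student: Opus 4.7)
The plan is a Borel--Cantelli argument whose key input is the resistor-network formula for the quenched hitting probabilities of the biased walk. By the strong Markov property at $T_x$ and the nearest-neighbor structure of $(X_j)$, failure of $\bar A_x^n$ forces the walk restarted at $x$ to hit $x-k$ with $k:=\lceil C\log n\rceil$; it thus suffices to prove that $\sum_{n\geq 1}\bbP\otimes P^\go\bigl(\exists\, 0\leq x\leq n:\bar A_x^n\text{ fails}\bigr)<\infty$ for $C$ large enough. Applying the gambler's ruin / effective-resistance formula (recalled in Appendix~\ref{app:formulas}) to the network with edge conductances $c_i^\lambda=\e^{\lambda i}c_i$,
\[
P_x^\go\bigl(T_{x-k}<\infty\bigr)\;=\;\frac{\sum_{i\geq x}\e^{-\lambda i}/c_i}{\sum_{i\geq x-k}\e^{-\lambda i}/c_i}.
\]

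Bounding the denominator below by the single term at index $x-j$, for any $j\in\{1,\ldots,k\}$, yields the quenched estimate
\[
P_x^\go\bigl(T_{x-k}<\infty\bigr)\;\leq\; \e^{-\lambda j}\,c_{x-j}\,U_x,\qquad U_x:=\sum_{i\geq 0}\e^{-\lambda i}/c_{x+i}.
\]
To neutralize the heavy tail of $c_{x-j}$ (which has no finite moments when $\ga_\infty\leq 1$), I pick $j$ in the right half-window $\{\lceil k/2\rceil,\ldots,k\}$ such that $c_{x-j}\leq 1$: by Assumption~\ref{mainassumption} $p_0:=\bbP(c_0\leq 1)>0$, so the event ``no such $j$ exists'' has $\bbP$-probability $(1-p_0)^{\lceil k/2\rceil}\leq n^{-\eta C}$ with $\eta:=-\tfrac{1}{2}\log(1-p_0)>0$. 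Separately, $U_x$ is a geometrically weighted sum of i.i.d.\ positive variables $1/c_{x+i}$ with regularly varying tail $L_0(t)t^{-\ga_0}$; by the single-big-jump principle (or a direct union bound over the terms, exploiting the exponential weights), $\bbP(U_x>n^\delta)\leq n^{-\delta\ga_0/2}$ for every small $\delta>0$ and all $n$ large enough.

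Fixing $C$ large and $\delta>0$ small so that $\lambda C/2-\delta>3$ and $\min(\eta C,\,\delta\ga_0/2)>3$, a union bound over $x\in\{0,\ldots,n\}$ shows the ``bad environment'' event (failure of either condition above at some $x$) has $\bbP$-probability at most $2(n+1)n^{-3}$, summable in $n$. On the complementary good event, the display above gives uniformly in $x\leq n$,
\[
P_x^\go\bigl(T_{x-k}<\infty\bigr)\;\leq\; U_x\,\e^{-\lambda\lceil k/2\rceil}\;\leq\; n^{\delta-\lambda C/2}\;\leq\; n^{-3},
\]
and summing over $x\leq n$ and taking expectations gives $\bbP\otimes P^\go(\exists x\leq n:\bar A_x^n\text{ fails})\leq 3(n+1)n^{-3}$, again summable. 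Borel--Cantelli then delivers the almost-sure $n_0$. The main technical obstacle is the uniform-in-$x$ tail bound on $U_x$ in the regime $\ga_0\leq 1$ (where $\bbE[1/c_0]=\infty$), which is handled by a single-big-jump estimate for geometrically weighted sums of regularly varying variables.
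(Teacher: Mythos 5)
Your proposal is correct, and its skeleton matches the paper's: both apply the gambler's ruin formula for $P_x^\go(T_{x-k}<\infty)$, bound the denominator below by a single term, show the resulting quantity has a polynomial tail in $n$ uniformly over $x\leq n$, and finish with a union bound and Borel--Cantelli. The genuine difference is in how the potentially heavy-tailed conductance appearing in the effective-conductance lower bound is neutralized. The paper fixes $j=k$, keeps $c_{x-k}$, and absorbs it into $K_x^n := c_{x-k}\,U_x$, whose polynomial tail exponent $\alpha=\min(\alpha_0,\alpha_\infty)$ is obtained from a product-of-regularly-varying-variables estimate (Lemma~2.5 of \cite{BS17}). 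You instead choose a data-dependent index $j$ in the right half of the window where $c_{x-j}\leq 1$ (available with $\bbP$-probability $1-n^{-\eta C}$), which costs a factor ($e^{-\lambda k/2}$ instead of $e^{-\lambda k}$) but means you only need the tail of $U_x$ itself, for which an elementary union bound suffices; both approaches then compensate by taking $C$ large. Your route buys a more self-contained argument at the price of an extra environmental good-event and a weaker exponential gain, while the paper's is slightly leaner once the product-tail lemma is granted. Two small remarks: you should not describe $\delta$ as ``small,'' since $\delta\alpha_0/2>3$ forces $\delta>6/\alpha_0$; the correct phrasing is to fix $\delta$ above that threshold and then take $C$ large enough that $\lambda C/2-\delta>3$ and $\eta C>3$. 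Also, the window $\{\lceil k/2\rceil,\ldots,k\}$ has $\lfloor k/2\rfloor+1\geq\lceil k/2\rceil$ elements, so the bound $(1-p_0)^{\lceil k/2\rceil}$ is valid but your exponent is very slightly loose; this is harmless.
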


\begin{proof}
We have that $(\bar  A_x^n)^c :=\{X_m< x - C \log n \,,\;\mbox{for some } m \geq T_x\}$, and then
\begin{align}\label{ponchielli}
P^\go \big( (\bar  A_x^n)^c \big)
	\leq P^\go_x(T_{x- C \log n}<+\infty)
	& =\lim_{M\to\infty} \Big({\sum_{j=x}^M \frac{1}{c_j^\gl}}\Big)\Big({	\sum_{j=x-C\log n}^M \frac{1}{c_j^\gl}}\Big)^{-1}
\end{align}
where $P^\go_x$ is the law of the random walk in random environment $\go$, starting from $x$. For the equality we have used \eqref{classicone1}.
Keeping only the conductance at position $x-C\log n$ in the second sum and recalling \eqref{eq:biased_cond}, \eqref{ponchielli} can be bounded by
\begin{align*}
P^\go \big( (\bar  A_x^n)^c \big)
	\leq \e^{ - \lambda C \log n }  c_{x - C\log n} \sum_{j=x}^\infty \e^{-\lambda (j-x)}\frac{1}{c_{j}}
	=:\e^{-\lambda C \log n} K_x^n(\omega)\, .
\end{align*}
Then, we may use that for any $x, n$,  $\bbP(K_x^n >t ) = t^{-\alpha+o(1)}$ (see \cite[Lem.~2.5]{BS17}), to get that  $\bbP$-a.s., for $n$ large enough, $\max_{0 \leq x \leq n} K_x^n \leq n^{3/\ga}$.
As a consequence, for $n$ large enough we have that $P^\go( \bigcup_{0\leq x\leq n} (\bar  A_x^n)^c)\leq n^{1+3/\ga} \e^{-\lambda C \log n}$. 
For $C$ large enough this  is summable, so Borel-Cantelli gives that $\bbP\otimes P^{\go}$-a.s., for $n$ large enough, $\bar  A_x^n$ holds for all $0\leq x \leq n$.
\end{proof}

\smallskip

\begin{proof}[Proof of Theorem~\ref{thm:Biased2}]
From Theorem~\ref{thm:passagetime} and Proposition~\ref{kingofthenorth}, we are able to prove Theorem~\ref{thm:Biased2}.
Let us denote $Y_n := \inf\{ j \colon T_j >n\}$, \textit{i.e.}\ the unique integer such that $T_{Y_n-1} \leq n < T_{Y_n}$. By construction, $X_n \leq Y_n$ for all $n\in \bbN$, and Proposition~\ref{kingofthenorth} tells that a.s., for $n$ large enough, $ X_j \geq Y_j -1 - C \log n$ for all $0\leq j\leq n$.
This shows that we only have to prove the convergence of $(\frac{\psi(n)}{n^{\ga}} Y_{\lfloor un\rfloor} )_{u \in [0,1]}$ as $n\to +\infty$.

Now, define for any $u \geq 0$
\begin{equation}
\label{def:cTcT-1}
\cT_n(u) 
	:= \frac{T_{\lfloor un \rfloor}}{ d_n} \qquad \text{ and } \qquad \cT^{-1}_n(u)
	:= \inf\{ s \geq 0 \, :\, \cT_n(s) >u\}
\end{equation}
the inverse map of $\cT_n$.
Let $f_n$ be an inverse of $d_n$, \textit{i.e.} a sequence of real numbers such that $d_{f_n} =n$ (we may assume that $n\mapsto d_n$ is defined on $\bbR_+$): note that by \eqref{def:dn}, we have that $n^{\ga} \psi(n)^{-1} \sim f_n$ as $n\to+\infty$.
Then, we can write $Y_{\lfloor u n \rfloor}$ in terms of $\cT_{f_n}^{-1}$:
\begin{align*}
\frac{Y_{\lfloor un \rfloor}}{f_n} 
	=\inf \{ j/f_n \, :\, T_{j} > un  \} 
	= \inf\{s \geq 0 \, : \,  \tfrac{1}{n} T_{ \lfloor s f_n \rfloor } >u\}
 = \cT_{f_n}^{-1} (u)
\end{align*}
Now, we may apply Theorem~\ref{thm:passagetime} with $f_n$ in place of $n$, together with Corollary~13.6.4 of \cite{Whitt02} which says that the inverse map from non-decreasing functions of $(\mathbf{D},M_1)$ to non-decreasing functions of $(\mathbf{D},\|\cdot\|_{\infty})$ is continuous at strictly increasing functions. Since $(\cS_{\ga}(u))_{u\geq 0}$ is a.s.\ strictly increasing, see~\cite{Ber99}, we obtain the following convergence in distribution as $n\to+\infty$, under $\bbP\otimes P^{\go}$
\[
( \cT_{f_n}^{-1} (u) )_{u\in[0,1]} \ \Longrightarrow \  \frac{\sin(\pi \ga)}{\pi\ga\bbE[\zeta^{\ga}] }\,   \big( (\mathcal{S}_{\ga})^{-1}(u) \big)_{u\in[0,1]} \, .
\]
Note that we also used the scaling relation $( \mathtt{c} \cS_\ga (u ))_{u\geq 0} \stackrel{(d)}{=} (\cS_{\ga}( \mathtt{c}^{\ga} u ) )_{u\geq 0}$ to express in a simpler way the inverse of $( \mathtt{c} \cS_{\ga}(u) )_{u\geq 0}$ with $\mathtt{c} = ( \frac{\pi \ga}{\sin(\pi \ga)} \bbE[\zeta^{\ga}])^{1/\ga}$.

This concludes the proof since $\cT_{f_n}^{-1} (u) =Y_{\lfloor un \rfloor} /f_n$, with $f_n \sim n^{\ga}/\psi(n)$ as $n\to+\infty$.
\end{proof}


\section{Definition of simple traps and their properties}
\label{sec:def}

In this section (and up to Section~\ref{sec:conv}), we suppose that Assumption \ref{assumptionA} holds, that is $\bbE[c_0^{\ga}]<+\infty$ or $\bbE[(1/c_{-1})^{\ga}] <+\infty$. We recall that under these hypothesis there will be only well-traps or wall-traps, but never well-and-wall traps, cf.\ Proposition~\ref{prop:conditional1}.

\subsection{Definition of simple traps}

By definition \eqref{def:dn} of $d_n$, the maximal depth of a trap between $0$ and $n$ will be of order $d_n$.
We give here the definition of simple $n$-traps, that is traps with near-maximal depth that are either well-shaped or wall-shaped.  We will drop the ``$n$-'' in the name whenever possible.
\begin{definition}\label{simpletraps}
	Let $q_n := (\log n)^{1/4}$. A site $x$ is a \emph{simple $n$-trap}   if $\rho_x > d_n \e^{- q_n}$. It is
		\begin{itemize}
			\item[$\blacklozenge$] of well type if $c_{x-1} > d_n \e^{- q_n^2}$,
			\item[$\blacklozenge$] of wall type if $\frac{1}{c_{x}} > d_n \e^{- q_n^2}$.
		\end{itemize}
	We call
	$\cW_x := \{\rho_x > d_n \e^{-{q_n}}\}$ the event that there is a trap in $x$.  
\end{definition} 

For $n\in\N$ we let  $C_n:=\lceil C\log n\rceil$, with $C>0$  the  constant in Proposition~\ref{kingofthenorth}. We divide $\Z$ in disjoint $n$-\textit{blocks}, where an $n$-block is a box of the form $\{j C_n, jC_n+1,\dots,(j+1)C_n-1\}$ for some $j\in\Z$.
A $n$-\textit{triblock} is a sequence of three consecutive $n$-blocks: we denote $\cB_j := \{ (j-1) C_n , \ldots, (j+2)C_n -1\} $ the $j$-th triblock, and we let $k_n := \lceil n /C_n \rceil$ be the number of $n$-triblocks between $0$ and $n$.

\begin{definition}
	\label{def:trapping}
	For a given $n\in\N$, we define a trapping environment as
	$
	\gG_n:= \bigcup_{x=0}^{C_n -1} \cW_x\,,
	$	that is, an environment with a simple $n$-trap in the first $n$-block.
	A $n$-triblock $\cB_j$ is called a \emph{trapping $n$-triblock} if $\theta^{jC_n} \go \in \gG_n$ (with $\theta^a \go:= (\go_{x+a})_{x\in \bbZ}$).
	We  define $\cJ_n  := \{j\in\{0,\dots,k_n\}:\,\theta^{jC_n} \go \in \Gamma_n\}$ the set of indices of the trapping triblocks.
\end{definition}

\begin{definition}
	\label{def:goodtrapping}
	A trapping $n$-triblock with a $n$-trap at site $x$ is said to be \emph{good}  if
	\begin{itemize}
		\item[(i)] $c_{x} \leq  \e^{4q_n}$, $c_{x-1} \geq e^{- 4 q_n}$;
		\item[(ii)] for all $ y \neq x-1,x$ in the $n$-triblock,  $c_y \leq \e^{4q_n}$ and $c_y\geq \e^{-4q_n}$.
	\end{itemize}
	We denote 
	\[
	\bar \gG_n := \bigcup_{x=0}^{C_n-1}  
	\Big( \cW_x\cap \{ c_{x}, \tfrac{1}{c_{x-1}} \leq  \e^{4q_n} \} \bigcap_{\substack{ -C_n \leq y  < 2C_n \\ y \neq x-1, x } } \{ c_y \in [\e^{-4q_n},\e^{4q_n}] \}  \Big)
	\]
\end{definition}

\subsection{Properties of simple traps}

In this section we collect some important properties of simple traps and of trapping triblocks:
\begin{itemize}
	\item[1.] Traps are isolated, in fact distant by at least $n\e^{- 5 q_n}$ (Lemma \ref{disgiunti});
	\item[2.] Traps are not too deep: they cannot be deeper than $d_n \e^{q_n}$ (Lemma \ref{lem:maxrho});
	\item[3.]  All trapping triblocks are good (Lemma \ref{sogood});
	\item[4.] Traps are the only annoying parts of the environment (Lemma \ref{annoying}).
\end{itemize}

We now state the lemmas corresponding to these four properties, and postpone their proofs to Appendix \ref{dimostrazionitrappole}---they will be treated together with the analogous statements in the case \ref{assumptionB}.

\begin{lemma}\label{disgiunti}
	Let $\cD_n$ be the event
	\[
	\cD_n 
	:= \bigcup_{ \substack{-n \leq x<y\leq n \\ |x-y|\leq n \e^{- 5 q_n } } }  \cW_x \cap \cW_y \, .
	\]
	Then $\bbP$-a.s.\ $\cD_n$ occurs for only finitely many $n$.
\end{lemma}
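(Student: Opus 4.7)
The plan is to estimate $\P(\cD_n)$ via a union bound, extract a geometric subsequence along which the Borel--Cantelli lemma applies, and then pass to all $n$ by a monotonicity argument.

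Writing $t_n := d_n \e^{-q_n}$, so that $\cW_x = \{\rho_x > t_n\}$, I first note that by Proposition~\ref{prop:rho0}, the definition~\eqref{def:dn} of $d_n$, and Potter's bound on the slowly varying $\psi$, one has for any fixed $\gep>0$ and $n$ large $\P(\cW_0) \leq C \e^{(\ga+\gep) q_n}/n$. I then bound $\P(\cD_n)$ by a union bound over pairs $(x,y)$ with $-n\leq x<y\leq n$ and $|x-y| \leq n \e^{-5 q_n}$, splitting on the separation.

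For $|x-y| \geq 2$, the variables $\rho_x$ and $\rho_y$ depend on disjoint conductances and are thus independent, giving $\P(\cW_x \cap \cW_y) \leq C \e^{2(\ga+\gep) q_n}/n^2$. Since there are at most $2n^2 \e^{-5 q_n}$ such pairs, their total contribution is $O\bigl(\e^{-(5-2\ga-2\gep) q_n}\bigr)$, which is exponentially small in $q_n$ since $\ga\leq 1$. For $|x-y|=1$ the events share the conductance $c_x$, but their conjunction implies $c_{x-1}/c_{x+1} > \e^{2\gl} t_n^2$; since $c_{x-1}$ and $c_{x+1}$ are i.i.d.\ and the ratio has the same tail exponent $\ga$ as $\rho_0$, Potter's bound yields $\P(\cW_x \cap \cW_{x+1}) \leq C n^{-2+\gd}$ for any $\gd>0$. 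Summed over the $O(n)$ nearest-neighbour pairs, this contributes $O(n^{-1+\gd})$.

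Combining, $\P(\cD_n) \leq C \e^{-c q_n} + C n^{-1+\gd}$ for some $c>0$. Along the subsequence $n_k = 2^k$ one has $q_{n_k} \sim (k\log 2)^{1/4}$, so both terms are summable in $k$; Borel--Cantelli then shows $\cD_{n_k}$ occurs $\P$-a.s.\ only finitely often. To extend to all $n$, I use monotonicity: for $n\in [n_k,n_{k+1}]$ the event $\cD_n$ is contained in the enlarged event $\tilde\cD_k$ defined with threshold $d_{n_k}\e^{-q_{n_{k+1}}}$, range $[-n_{k+1},n_{k+1}]$ and separation $n_{k+1}\e^{-5 q_{n_k}}$; since $n_{k+1}/n_k = 2$ and $q_{n_{k+1}}/q_{n_k}\to 1$, the same union-bound argument gives that $\P(\tilde\cD_k)$ is summable in $k$, and a second Borel--Cantelli concludes.

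The main technical subtlety lies in handling the nearest-neighbour pairs: a naive bound $\P(\cW_x \cap \cW_{x+1}) \leq \P(\cW_x)$ would leave a contribution of order $\e^{(\ga+\gep) q_n}$, which does \emph{not} decay; one must exploit that the product $\rho_x \rho_{x+1}$ depends only on the \emph{independent} conductances $c_{x-1}$ and $c_{x+1}$, so that the conjunction reduces to a single heavy-tail event with a squared threshold.
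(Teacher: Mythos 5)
Your proposal is correct and follows the same route as the paper: a union bound split on the separation of the pair, with the nearest-neighbour case handled by the observation that $\rho_x\rho_{x+1}=\rho_x^{(1)}$ depends only on the independent conductances $c_{x-1},c_{x+1}$ and thus has a squared threshold, followed by a Borel--Cantelli along a subsequence and a monotonicity argument. The only cosmetic difference is the choice of subsequence ($n_k=2^k$ versus the paper's $n_\ell=\exp((2\log\ell)^4)$); both yield summability of the bound $C\e^{-cq_n}+Cn^{-1+\gd}$, so either works.
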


\noindent The proof of Lemma \ref{disgiunti} can be found in Section \ref{proofdisgiunti} of the Appendix.
	As a consequence of Lemma \ref{disgiunti}, we have a.s.\ that $|\cJ_n| \leq \e^{5 q_n}$ for $n$ large enough.

\begin{lemma}\label{lem:maxrho}
	$\bbP$-a.s., there is some $n_0$ such that for all $n\geq n_0$
	\[ 
	M_n:=\sup_{ -n \leq x \leq n}\rho_x
	\leq  d_n \e^{ q_n} \, .
	\]
\end{lemma}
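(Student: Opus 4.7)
The plan is a union bound combined with Borel--Cantelli along a geometric subsequence, followed by a monotonicity interpolation. First I would note that for any level $t>0$,
\[
\bbP(M_n>t)\leq (2n+1)\,\bbP(\rho_0>t),
\]
since the $2n+1$ events $\{\rho_x>t\}$ are not independent but the union bound does not care. By Proposition~\ref{prop:rho0}, $\bbP(\rho_0>t)\sim \psi(t)\,t^{-\alpha}$ with $\psi$ slowly varying, and by the defining relation $\psi(d_n)d_n^{-\alpha}\sim 1/n$, one obtains
\[
n\,\bbP\bigl(\rho_0>d_n e^{q_n}\bigr) \;\sim\; \frac{\psi(d_n e^{q_n})}{\psi(d_n)}\,e^{-\alpha q_n}.
\]
Applying Potter's bound to the slowly varying $\psi$, for any $\delta\in(0,\alpha)$ and $n$ large, $\psi(d_n e^{q_n})/\psi(d_n)\leq 2\,e^{\delta q_n}$, which gives
\[
\bbP(M_n>d_n e^{q_n}) \;\leq\; C\,e^{-(\alpha-\delta)q_n} \;=\; C\,e^{-(\alpha-\delta)(\log n)^{1/4}}.
\]

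The main obstacle is that $e^{-c(\log n)^{1/4}}$ is \emph{not} summable in $n$, so direct Borel--Cantelli fails. To resolve this I would restrict to the geometric subsequence $n_k:=2^k$, for which
\[
\sum_k e^{-c(k\log 2)^{1/4}} \;<\; \infty
\]
because $c k^{1/4}$ eventually exceeds $2\log k$. The Borel--Cantelli lemma then yields $\bbP$-a.s.\ $M_{n_k}\leq d_{n_k}e^{q_{n_k}}$ for all $k$ large enough.

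The final step is interpolation. For $n\in[n_k,n_{k+1}]$, monotonicity $M_n\leq M_{n_{k+1}}$ combined with the fact that $d_\cdot$ is regularly varying of index $1/\alpha$ (so $d_{n_{k+1}}/d_{n_k}\to 2^{1/\alpha}$) and that $q_{n_{k+1}}-q_{n_k}\to 0$ gives $d_{n_{k+1}}e^{q_{n_{k+1}}}\leq K\, d_n e^{q_n}$ for some absolute constant $K$ and all $k$ large. To absorb the constant $K$, I would simply re-run the entire argument with $q_n/2$ in place of $q_n$ in the exponent (summability is preserved): the conclusion becomes $M_n\leq K\, d_n e^{q_n/2}$, which is $\leq d_n e^{q_n}$ for all $n$ large since $q_n=(\log n)^{1/4}\to\infty$, concluding the proof.
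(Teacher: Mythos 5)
Your proof is correct and takes essentially the same route as the paper: union bound plus Potter's bound to get $\bbP(M_n>d_n e^{q_n/2})\lesssim e^{-c q_n}$, Borel--Cantelli along a subsequence (the paper uses $n_\ell=\exp\{(\tfrac{8}{\alpha}\log\ell)^4\}$, tuned so that $q_{n_\ell}$ is a multiple of $\log\ell$ and $n_{\ell+1}/n_\ell\to1$, whereas you use the geometric $n_k=2^k$), then monotone interpolation and absorption of the resulting constant into $e^{q_n/2}$. The difference in subsequence is purely cosmetic; the absorption step is identical in spirit to the paper's ``$2\le e^{q_n/2}$'' finish.
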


\noindent The proof of Lemma \ref{lem:maxrho} can be found in Section \ref{prooflem:maxrho} of the Appendix.

\begin{lemma}\label{sogood}
	Let $\cG_n$ be the event
	\[
	\cG_n := \bigcup_{ - n\leq x\leq n} \mathcal W_x \cap  \Big( \bigcup_{ \substack{ y = x- 2C_n \\ y\neq x-1,x } }^{x+2 C_n}  \{ c_{x}, \tfrac{1}{c_{x-1}} > \e^{ 4q_n} \} \cup\{ c_y \notin [\e^{-4 q_n},\e^{4 q_n}] \} \Big) \, .
	\]
	Then $\bbP$-a.s.\ $\cG_n$ occurs only for finitely many $n$.
\end{lemma}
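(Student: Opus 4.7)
The plan is to bound $\bbP(\cG_n)$ by a first-moment estimate over the potential trap sites and then to apply Borel--Cantelli along a dyadic subsequence, since direct summability will fail. First I would decompose
\[
\cG_n \ \subseteq\ \bigcup_{-n \leq x \leq n} \cW_x \cap \Big(\{c_x > \e^{4q_n}\} \cup \{c_{x-1} < \e^{-4q_n}\} \cup \bigcup_{\substack{|y-x| \leq 2 C_n \\ y\neq x-1, x}} \{ c_y \notin [\e^{-4q_n}, \e^{4q_n}]\}\Big),
\]
and call these contributions (a), (b), (c). The key observation for (a) is that $\rho_x > d_n \e^{-q_n}$ together with $c_x > \e^{4q_n}$ forces $c_{x-1} = \e^{\lambda} c_x \rho_x > \e^{\lambda + 3q_n} d_n$, so that $c_x$ and $c_{x-1}$ must be atypically large \emph{simultaneously}; likewise in (b) both must be atypically small. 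The nearby event in (c) is independent of $\cW_x$.

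For (a) I would use the independence of $c_x$ and $c_{x-1}$, the tails in Assumption \ref{mainassumption}, and Potter's bounds on $L_0, L_\infty$ to get, for any small $\delta > 0$,
\[
\bbP\big(\cW_x \cap \{c_x > \e^{4q_n}\}\big) \ \leq\ C\, L_\infty(d_n)\, d_n^{-\alpha_\infty}\, \e^{-(7\alpha_\infty - \delta) q_n},
\]
and the symmetric bound for (b) involving $L_0, \alpha_0$. Proposition \ref{prop:rho0} then gives the crucial comparison $L_\infty(d_n) d_n^{-\alpha_\infty} = O(1/n)$: if $\alpha_\infty > \alpha$ this is immediate since $d_n^{-\alpha_\infty} = o(d_n^{-\alpha})$ and $\psi(d_n) d_n^{-\alpha} \sim 1/n$; while if $\alpha_\infty = \alpha$ then $\psi(t)$ contains an $L_\infty(t)$ summand, so $L_\infty(d_n) d_n^{-\alpha} = O(\psi(d_n) d_n^{-\alpha}) = O(1/n)$, and symmetrically for $L_0$. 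For (c), independence together with $\bbP(\cW_x) \leq C n^{-1} \e^{(\alpha+\delta)q_n}$ (from \eqref{def:dn} and Potter's bound on $\psi$) and $\bbP(c_0 \notin [\e^{-4q_n}, \e^{4q_n}]) \leq C \e^{-(4\alpha - \delta)q_n}$ gives a contribution of at most $C n^{-1} \e^{-(3\alpha-2\delta)q_n}$ per pair $(x,y)$. Summing over $x \in [-n,n]$ and over the $O(C_n) = O(\log n)$ values of $y$, I expect to reach
\[
\bbP(\cG_n) \ \leq\ C \log n \cdot \e^{-c q_n} \qquad \text{for some } c>0.
\]

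The hard part will be that with $q_n = (\log n)^{1/4}$ this bound decays only subpolynomially, so $\sum_n \bbP(\cG_n) = \infty$ and Borel--Cantelli cannot be applied directly. I would bypass this by grouping $n$ dyadically: set $n_k = 2^k$ and define an enlarged event $\tilde\cG_k$ by replacing, in the definition of $\cG_n$, the range of $x$ by $[-n_{k+1}, n_{k+1}]$, the trap threshold by $d_{n_k} \e^{-q_{n_k}}$, the bad thresholds by $\e^{\pm 4 q_{n_k}}$, and the window for $y$ by $|y-x| \leq 2 C_{n_{k+1}}$. Since $n\mapsto d_n \e^{-q_n}$ is eventually increasing and $n \mapsto q_n, C_n$ are increasing, this gives $\cG_n \subseteq \tilde\cG_k$ for every $n\in[n_k, n_{k+1}]$. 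Repeating the estimates of the previous paragraph with $n_k$ in place of $n$, and using $n_{k+1}/n_k = 2$, yields $\bbP(\tilde\cG_k) \leq C\, k \cdot \e^{-c k^{1/4}}$, which \emph{is} summable since $k^{1/4}$ beats any multiple of $\log k$; Borel--Cantelli then concludes. The main obstacle is precisely this mismatch between the very slow decay $\e^{-c(\log n)^{1/4}}$ and the need for almost-sure convergence: the dyadic grouping is essential and works only because $n_{k+1}/n_k$ is bounded while the penalty $\e^{-c k^{1/4}}$ still decays superpolynomially in $k$; a secondary but necessary input is Proposition \ref{prop:rho0}, which controls how $L_\infty(d_n)$ and $L_0(d_n)$ compare to $\psi(d_n) \sim n^{-1} d_n^{\alpha}$.
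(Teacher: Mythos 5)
Your proof is correct and follows essentially the same route as the paper: use independence of the conductances, Potter's bounds, and the comparison $L_\infty(d_n)d_n^{-\alpha_\infty},\, L_0(d_n)d_n^{-\alpha_0} = O(\psi(d_n)d_n^{-\alpha}) = O(1/n)$ supplied by Proposition~\ref{prop:rho0} to get $\bbP(\cG_n) \leq C\log n \cdot \e^{-cq_n}$, and then upgrade to an almost-sure statement by a monotonicity-plus-subsequence Borel--Cantelli argument. Your dyadic subsequence $n_k=2^k$ (giving $\bbP(\tilde\cG_k)\leq Ck\,\e^{-c'k^{1/4}}$, which is summable since the stretched exponential beats any polynomial) is a clean alternative to the paper's tailored choice $n_\ell = \exp\big((\tfrac{4}{\alpha}\log\ell)^4\big)$, which instead is engineered so that $q_{n_\ell}$ grows like $\log\ell$ and the bound becomes $\ell^{-2}$; both are valid.
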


\noindent The proof of Lemma \ref{sogood} can be found in Section \ref{proofsogood} of the Appendix.

\smallskip
The last property mentioned above may be more mysterious.
Let us define  a generalization of $\rho_x$:  for all $x\in \bbZ$ and $k\in\N$, set
\begin{equation}
\label{def:rhok}
\rho_x^{(k)} := \e^{-\lambda(k+1)} \frac{c_{x-1}}{c_{x+k}} =   \rho_{x} \cdots \rho_{x+k}\, . 
\end{equation}
As it will be clear in the next sections, high values of $\rho_x^{(k)}$ are also responsible for the trapping of the walk. What we mean by property 4 is that, under Assumption \ref{assumptionA}, high values of $\rho_x^{(k)}$ are only observed in the presence of a well or of a wall (that is, they cannot come from the combination of an almost-well and an almost-wall). This is to ensure that triblocks that are not trapping triblocks (according to Definition \ref{def:trapping}) will not slowdown the walk for a long time.

\begin{lemma}
\label{annoying}
 Let $\gep_n =q_n^{-\gd}$ with some $\gd>0$, and let
\begin{align}\label{giupiter}
\mathcal{H}_n 
:= \bigcup_{-n\leq x \leq n\,}  \bigcup_{k\ge 1}   \{\rho_{x}^{(k)} > \gep_n \e^{-\lambda  k/2} d_n \}\cap \cW_x^{c} \cap \cW_{x+k}^c\,.
\end{align}
Then, if $\gd$ is  sufficiently small, we have that $\bbP(\cH_n)  \to 0$ as $n\to+\infty$.
\end{lemma}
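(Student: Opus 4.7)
The plan is to control $\bbP(\cH_n)$ by a union bound over $x$ and $k$, reducing via stationarity to a per-site estimate. Set $M_k := \gep_n \e^{\lambda(k/2+1)} d_n$ (so that $\rho_x^{(k)} > \gep_n \e^{-\lambda k/2} d_n$ is equivalent to $c_{x-1}/c_{x+k} > M_k$) and $N := \e^{\lambda} d_n \e^{-q_n}$ (so that $\cW_y^c$ reads $c_{y-1}/c_y \leq N$). Then $\bbP(\cH_n) \leq (2n+1)\sum_{k\geq 1} \bbP(E_{0,k})$ with
\[
E_{0,k} := \{c_{-1}/c_k > M_k\}\cap\{c_{-1}/c_0 \leq N\}\cap\{c_{k-1}/c_k \leq N\}.
\]
For $k\geq 2$ the four conductances $c_{-1},c_0,c_{k-1},c_k$ are mutually independent; integrating out $c_0$ and $c_{k-1}$,
\[
\bbP(E_{0,k}) = \iint \mathbf{1}_{\{u/v > M_k\}}\, \bbP(c_0 \geq u/N)\, \bbP(c_{k-1} \leq Nv)\, \mu(du)\mu(dv),
\]
where $\mu$ denotes the law of $c_0$. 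The boundary case $k=1$ (in which $c_0$ appears in both $\cW_0^c$ and $\cW_1^c$) is handled by the same scheme after an additional integration in $c_0$ and only adds a slowly varying correction.

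The crucial point is that, under Assumption~\ref{assumptionA}, the three constraints defining $E_{0,k}$ are jointly strictly rarer than just the first one. Assume without loss of generality $\bbE[c_0^{\alpha}]<+\infty$ (the wall-dominant case, so that $\alpha=\alpha_0<\alpha_\infty$). Then Breiman's theorem, applied to the product of $c_{-1}$ (with finite $\alpha_0+\gep$ moment) and $1/c_k$ (with tail regularly varying of index $\alpha_0$), gives $\bbP(c_{-1}/c_k > M_k)\sim \bbE[c_0^{\alpha_0}]\, L_0(M_k)\, M_k^{-\alpha_0}$; moreover, conditionally on this event, the pair $(c_{-1},1/c_k)$ concentrates on the wall configuration ``$c_{-1}$ bounded, $c_k$ small of order $c_{-1}/M_k$'' (a $k$-version of Proposition~\ref{prop:conditional1}). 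On such a configuration the constraint $\cW_k^c=\{c_{k-1}\leq Nc_k\}$ becomes rare: its conditional probability is $G_0(Nc_k)\sim L_0(1/(Nc_k))(Nc_k)^{\alpha_0}$, and averaging over the biased law of $c_{-1}$ produces an extra factor $(N/M_k)^{\alpha_0} = q_n^{\alpha_0\delta}\,\e^{-\alpha_0 q_n}\,\e^{-\alpha_0\lambda k/2}$. Putting the two factors together I expect
\[
\bbP(E_{0,k}) \;\lesssim\; \frac{1}{n}\, \ell(n)\, q_n^{C\delta}\, \e^{-\alpha_0 q_n}\, \e^{-\alpha_0\lambda k},
\]
for some constant $C$ and some slowly varying $\ell$.

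Geometric summation in $k\geq 1$ and multiplication by $n$ then yield $\bbP(\cH_n)\lesssim \ell(n)\, q_n^{C\delta}\, \e^{-\alpha_0 q_n}\to 0$, since $\e^{-\alpha_0 q_n}=\e^{-\alpha_0(\log n)^{1/4}}$ dominates any polylog in $n$. The main technical obstacle will be to make the Breiman-type heuristic of the previous paragraph rigorous when the second moment $\bbE[c_0^{2\alpha_0}]$ is infinite, i.e.\ when $\alpha_\infty\leq 2\alpha_0$: in that regime the conditional bias moment of $c_{-1}$ diverges and one cannot directly integrate the extra factor $(Nc_{-1}/M_k)^{\alpha_0}$. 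To deal with this I would truncate $c_{-1}\leq K$ with $K=n^{1/\alpha_\infty+\eta}$; on $\{c_{-1}\leq K\}$ a direct Karamata estimate of $\int_0^{u/M_k}G_0(Nv)\,\mu(dv)$ recovers the bound above with an extra polylog factor that $\e^{-\alpha_0 q_n}$ absorbs, while on $\{c_{-1}>K\}$ the constraint $\cW_0^c$ forces $c_0\geq c_{-1}/N>K/N$, costing a factor $L_\infty(K/N)(K/N)^{-\alpha_\infty}$ that makes the resulting summand over $x$ tend to zero. The symmetric case $\bbE[1/c_0^{\alpha}]<+\infty$ is handled identically after swapping $c_{-1}\leftrightarrow 1/c_k$ and $\cW_0^c\leftrightarrow\cW_k^c$; the assumption that $\delta$ be small only surfaces in the polylog bookkeeping of this truncation argument.
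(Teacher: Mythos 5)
Your high-level heuristic is the right one (a large $\rho_x^{(k)}$ that is not already a well at $x-1$ or a wall at $x+k$ forces an additional rare event at $c_x$ or $c_{x+k-1}$), and the Breiman-style split into a ``typical'' concentrated regime plus a truncated ``atypical'' tail is essentially what the paper does via its two contributions. However there are concrete gaps that keep the argument from closing.

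\textbf{The case $\alpha_0=\alpha_\infty$ is wrongly discarded.} You write ``Assume WLOG $\bbE[c_0^\alpha]<\infty$ (the wall-dominant case, so that $\alpha=\alpha_0<\alpha_\infty$).'' This implication is false: Assumption~\ref{assumptionA} explicitly allows $\alpha_0=\alpha_\infty$ provided one of the two $\alpha$-moments is finite (which then forces $\gamma_\infty<-1$ or $\gamma_0<-1$ on the associated slowly varying functions). Your symmetry argument swaps $c_{-1}\leftrightarrow 1/c_k$, so it covers $\alpha_\infty<\alpha_0$, but the diagonal case is never treated. This is exactly the regime where Breiman's theorem in the form you invoke (finite $\alpha_0+\epsilon$ moment of $c_{-1}$) is \emph{not} available, since $\bbE[c_0^{\alpha+\epsilon}]=\infty$ for every $\epsilon>0$; one must fall back on Cline's refined estimate (the one giving Proposition~\ref{prop:rho0}).

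\textbf{The claimed decay rate is too strong.} Your final estimate $\bbP(E_{0,k})\lesssim\frac{1}{n}\,\ell(n)\,q_n^{C\delta}\,\e^{-\alpha_0 q_n}\,\e^{-\alpha_0\lambda k}$, once summed, gives $\bbP(\cH_n)\lesssim q_n^{C\delta}\,\e^{-\alpha_0 q_n}$, i.e.\ stretched-exponential decay. In the case $\alpha_0=\alpha_\infty$ the true decay is only polynomial in $q_n$: the paper obtains (after daje5) $\bbP(\cH_n)\leq c\,\gep_n^{-2\alpha}\,q_n^{-c'}$ with $c'$ a specific exponent depending on $\gamma_0,\gamma_\infty$. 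The source of the slowdown is the regime $c_{x-1}\in[\e^{q_n/4},\,d_n\e^{-q_n/2}]$, where neither endpoint alone accounts for $\rho_x^{(k)}$ being large and the slowly varying corrections $L_0,L_\infty$ no longer wash out. This is also why the lemma must say ``if $\delta$ is sufficiently small'': the condition is $2\alpha\delta<c'$, tied quantitatively to $\gamma_0,\gamma_\infty$, not mere ``polylog bookkeeping.''

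\textbf{The truncation on $\{c_{-1}>K\}$ does not give the stated cost.} With $K=n^{1/\alpha_\infty+\eta}$ and $N\asymp d_n\e^{-q_n}\asymp n^{1/\alpha_0}\e^{-q_n}$, in the case $\alpha_0<\alpha_\infty$ one has $K/N\to 0$ for small $\eta$, so the inequality $c_0>K/N$ is vacuous and there is no factor $L_\infty(K/N)(K/N)^{-\alpha_\infty}$ to collect. You would instead need to keep the full constraint $c_0>c_{-1}/N$ and run a Karamata computation over the whole range $c_{-1}\in[\e^{q_n/4},\,d_n\e^{-q_n/2}]$ (which is what the paper does after \eqref{daje3}), and this is precisely where the $\alpha_\infty\lessgtr 2\alpha_0$ dichotomy and the $\gamma$-exponents enter the exponent of the final bound.

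In short, the skeleton of your argument matches the paper's, but the diagonal case $\alpha_0=\alpha_\infty$ — the one requiring $\delta$ small and producing the barely-sub-polynomial rate — is exactly the part your sketch glosses over, and the truncation step as written does not close.
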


\noindent The proof of Lemma \ref{annoying} can be found in Section \ref{proofannoying} of the Appendix.


\section{Reduction to large traps}
\label{sec:reduc1}

For a triblock $\cB_j$, we define its crossing time:
\begin{equation}
\label{crossB}
T(\cB_j) :=  T_{(j+2)C_n} - T_{j C_n}   \, ,
\end{equation}
which is the time it takes for the random walk, started at the beginning of the middle $n$-block of $\cB_j$, to exit $\cB_j$ from the right. In view of Proposition~\ref{kingofthenorth}, provided that $n$ is sufficiently large, the walk $X_i$ remains in $\cB_j$  for all $i$ between times $T_{j C_n}$ and $T_{(j+2)C_n}$.
In this section we prove that the time $T_n$ to reach distance $n$ is close to the crossing time of all trapping $n$-triblocks, so that the other blocks can be neglected.

\begin{proposition}\label{blocchetti}
Let $\gep_n:= q_n^{-\gd}$ with $\gd$ small enough, as defined in Lemma~\ref{annoying}: we have that
\begin{align*}
 P^\go\Big( \frac{1}{d_n} \Big|T_n-\sum_{j\in \cJ_n} T(\cB_j)\Big|  >   \gep_n^{(1-\ga)/2} \Big) 
 	\leq  \gep_n^{(1-\ga)/2} \, .
\end{align*}
with $\bbP$-probability going to $1$.
\end{proposition}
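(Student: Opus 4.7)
The proof combines a decomposition of $T_n$ (using the disjointness of trapping triblocks) with a quantitative control of the expected crossing time of non-trapping regions. I would proceed in three steps.

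\emph{Step 1 (Decomposition).} Writing $\cJ_n=\{j_1<\dots<j_r\}$, Lemma~\ref{disgiunti} ensures that, with $\bbP$-probability tending to one, $j_{k+1}-j_k\geq n\,\e^{-5q_n}/C_n$, so the triblocks $\cB_{j_1},\ldots,\cB_{j_r}$ are pairwise disjoint. Setting by convention $(j_0+2)C_n=0$ and $j_{r+1}C_n=n$, the first-passage time decomposes as
\[
T_n-\sum_{k=1}^r T(\cB_{j_k})\;=\;\sum_{k=0}^{r}\bigl(T_{j_{k+1}C_n}-T_{(j_k+2)C_n}\bigr),
\]
each summand being the time to cross a union of consecutive non-trapping blocks. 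By Proposition~\ref{kingofthenorth}, after reaching $(j_k+2)C_n$ the walk does not backtrack past $(j_k+1)C_n$, in particular never returning to the trap inside block $j_k$. Each summand above can therefore be viewed as the hitting time of $j_{k+1}C_n$ by a walk reflected at $(j_k+1)C_n$, evolving through an environment in which no position is a trap.

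\emph{Step 2 (Expected non-trapping crossing time).} For a reflected walk on $[a,b]$ the classical birth-death formula gives
\[
E_a^{\omega}[T_b]\;=\;(b-a)+2\!\!\sum_{a\leq z<y\leq b-1}\!\!\frac{c_z^{\lambda}}{c_y^{\lambda}}\;=\;(b-a)+2\!\!\sum_{a\leq z<y\leq b-1}\!\!\rho_{z+1}^{(y-z-1)}.
\]
On the high-probability event $\cH_n^c$ of Lemma~\ref{annoying}, each $\rho_{z+1}^{(y-z-1)}$ appearing is bounded by $\gep_n\e^{-\lambda(y-z-1)/2}d_n$. The essential quantitative input is the truncated moment estimate
\[
\bbE\bigl[\rho_0^{(k)}\ind_{\{\rho_0^{(k)}\leq \gep_n\e^{-\lambda k/2}d_n\}}\bigr]\;\leq\;C\frac{d_n}{n}\gep_n^{1-\ga}\e^{-\lambda k(1+\ga)/2},
\]
which follows from Proposition~\ref{prop:rho0}, the distributional identity $\rho_0^{(k)}\stackrel{d}{=}\e^{-\lambda k}\rho_0$ (noting that $\rho_0^{(k)}=\e^{-\lambda(k+1)}c_{-1}/c_k$ involves independent $c_{-1},c_k$), and Karamata's theorem. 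Summing over $k\geq 0$ and over pairs $(z,y)$ in non-trapping regions (of total length $\leq n$) yields
\[
\bbE\Big[E^{\omega}\Big(\sum_{k=0}^{r}\bigl(T_{j_{k+1}C_n}-T_{(j_k+2)C_n}\bigr)\Big)\ind_{\cH_n^c}\Big]\;\leq\;C\gep_n^{1-\ga}d_n.
\]

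\emph{Step 3 (Markov and conclusion).} Applying Markov's inequality on $\bbP$ (with a slowly growing prefactor $K_n\to\infty$) shows that, with $\bbP$-probability tending to one, $E^{\omega}[\cdot]\leq K_n\,C\gep_n^{1-\ga}d_n$; a further Markov inequality under $P^{\omega}$ then gives the quenched bound
\[
P^{\omega}\Big(\tfrac{1}{d_n}\bigl|T_n-\textstyle\sum_{j\in\cJ_n}T(\cB_j)\bigr|>\gep_n^{(1-\ga)/2}\Big)\leq\gep_n^{(1-\ga)/2},
\]
the slowly growing factor $K_n$ being absorbable into a small modification of $\gd$ in $\gep_n=q_n^{-\gd}$. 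The main technical obstacle is Step~2: Lemma~\ref{annoying} alone gives a per-position bound of order $\gep_n d_n$, which summed over $\sim n$ non-trapping positions would be prohibitive; extracting instead the sharper factor $\gep_n^{1-\ga}$ requires the truncated moment estimate above, in which the tail exponent $\ga$ of $\rho_0$ (through Karamata) combines with the geometric damping $\e^{-\lambda k}$ built into $\rho_0^{(k)}$ to make the series $\sum_k \bbE[\rho_0^{(k)}\ind_{\{\cdots\}}]$ converge at the desired rate.
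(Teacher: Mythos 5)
Your decomposition in Step~1, the identification of the formula \eqref{realmagic}, the use of Lemma~\ref{annoying} to truncate the $\rho_z^{(k)}$, and the truncated moment estimate in Step~2 (via the identity $\rho_0^{(k)}\stackrel{d}{=}\e^{-\lambda k}\rho_0$ and Karamata) are all sound and essentially match the first half of the paper's argument — up to \eqref{eq:expectT}, with the same annealed moment bound $\bbE\otimes E^\go[\cdot]\lesssim\gep_n^{1-\ga}d_n$. The gap is in Step~3: the ``double Markov'' (Markov under $\bbP$ to get a high-probability environment set, then Markov under $P^\go$) is intrinsically lossy here, and the slowly growing factor $K_n$ cannot be absorbed into $\gd$. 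To have $\bbP(G_n)\to1$ you must take $K_n\to\infty$; you then get, on $G_n$,
$P^\go(\cdot>\gep_n^{(1-\ga)/2})\le K_n C\,\gep_n^{(1-\ga)/2}$, which never drops below the stated bound $\gep_n^{(1-\ga)/2}$. Shrinking $\gd$ (i.e.\ taking $\gep_n$ larger) changes \emph{both} sides of the claimed inequality in lockstep and gives no slack; enlarging $\gd$ would break Lemma~\ref{annoying}.

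The missing ingredient is a genuine concentration estimate for the random variable $E^\go[T_n-\sum_{j\in\cJ_n}T(\cB_j)]$ under $\bbP$, rather than a first-moment bound. The point is that this random variable is heavy-tailed, so its expectation $\sim\gep_n^{1-\ga}d_n$ is dominated by rare environments; its \emph{typical} value is much smaller. This is exactly what the paper exploits: by splitting $\sum_z\bar R_z^n$ into $3\ell_n$ independent sub-sums and applying the Fuk--Nagaev inequality (\cite[Thm.~1.1]{Nag79}), one obtains $\bbP\big(\sum_z\bar R_z^n>\gep_n^{\gd'}d_n\big)\le c''\e^{-\frac12\gep_n^{-(1-\gd')/2}}$ for any $\gd'\in(1-\ga,1)$, i.e.\ a high-$\bbP$-probability bound $E^\go[\cdot]\le 3\gep_n^{\gd'}d_n$ with $\gd'>1-\ga$, strictly sharper than the mean. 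The single Markov under $P^\go$ then gives $3\gep_n^{\gd'-(1-\ga)/2}\le\gep_n^{(1-\ga)/2}$ for $n$ large, which closes the proof. Your moment estimate would deliver the \emph{annealed} version ($\bbP\otimes P^\go(\cdot>\gep_n^{(1-\ga)/2})\le C\gep_n^{(1-\ga)/2}$) or a quenched statement with a strictly slower rate, but not the quenched statement with the exponent $(1-\ga)/2$ on both sides as claimed. Replacing the Markov-under-$\bbP$ step by the Fuk--Nagaev argument is what repairs the proof.
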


Before we start the proof, recall the definition \eqref{def:rhok} of $\rho_x^{(k)}:= \e^{-\lambda (k+1)} \frac{c_{x-1}}{c_{x+k}}$.  
Recall also Proposition~\ref{prop:rho0}: using Potter's bound (cf.~\cite[Thm.~1.5.6]{BGT89}), we easily have that there is a constant $c>0$ such that for any $k\geq 0$ 
\begin{equation}
\label{asymprhok}
\bbP ( \rho_x^{(k)} > t \big) \leq c\, \e^{-\lambda \ga k/2} \psi(t) t^{-\ga}\quad \text{ for all } t>1\,  .
\end{equation}

\begin{proof}[Proof of Proposition~\ref{blocchetti}]
We write that a.s., for $n$ large enough
\begin{equation}
\label{decompTn}
T_n-\sum_{j\in \cJ_n} T(\cB_j)  =  \sum_{j=0}^{k_n} (T_{ (j+1) C_n} - T_{ j C_n} ) \ind_{ \{ j,j-1\notin \cJ_n\} } \ind_{A_{j}^n}\, .
\end{equation}
Here, we let $A_{j}^n:=\{ X_{T_{jC_n}+i} > (j-1) C_n \text{ for all }i\geq 0\}$ be the event  that the random walk does not backtrack from the $j$-th $n$-block to position $(j-1) C_n$: Proposition~\ref{kingofthenorth} tells that $\bbP\otimes P^\omega$-a.s., for $n$ large enough, $\ind_{A_{j}^n} =1$ for all $0\leq j \leq k_n$.
We also used that there cannot be two consecutive good blocks, see Lemma~\ref{disgiunti}.
Let us now remark that thanks to  formula \eqref{realmagic}
\begin{align*}
E^{\go} \big[ (T_{(j+1) C_n} - T_{ j C_n} ) \ind_{A_{j}^n} \big] & = E_{j C_n}^{\go}\big[ T_{(j+1) C_n} \ind_{A_{j}^n} \big] \\
 & \leq C_n +  \sum_{z =(j-1) C_n }^{jC_n-1}   \sum_{k=j C_n -z}^{(j+1)C_n-z}  \rho_{z}^{(k)} + \sum_{z=jC_n}^{(j+1)C_n -1}  \sum_{k=0}^{(j+1)C_n-z} \rho_z^{(k)}  \, .
\end{align*}
Here we have first set all conductances to the left of $(j-1)C_n$ equal to $0$ thanks to the indicator of $A_j^n$  (so the $\rho_z^{(k)}$ are equal to $0$ for $z< (j-1)C_n$), and then we have dropped the indicator.
Thanks to Lemma \ref{annoying}, we have that with high $\bbP$-probability all the $\rho_z^{(k)}$ involved in the sum (\textit{i.e.} for $j, j-1\notin\cJ_n$) are smaller than $\gep_n \e^{-\lambda k/2} d_n$.
Going back to \eqref{decompTn}, we therefore get that  on the event $\cH_n^c$ (recall $\cH_n$ is defined in \eqref{giupiter}),
\begin{align}
E^{\go} &\Big[ T_n-\sum_{j\in \cJ_n} T(\cB_j)   \Big] 
	\leq n + 2\sum_{z=- C_n}^n  \sum_{k=0}^{2C_n} \rho_z^{(k)} 
		\ind_{\{ \rho_z^{(k)} \leq \gep_n \e^{-\lambda k/2} d_n \}} \notag \\
	&\leq  n 
		+  2 \sum_{z=- C_n}^n  \sum_{k=0}^{\ell_n } \rho_z^{(k)} \ind_{\{ \rho_z^{(k)} \leq \gep_n \e^{-\lambda k/2} d_n \}}  
		+ 2 \sum_{z=- C_n}^n  \sum_{k=\ell_n }^{C_n} \rho_z^{(k)} \ind_{\{ \rho_z^{(k)} \leq   \e^{-\lambda \ell_n  /2} d_n \}}   \, ,
\label{eq:expectT}
\end{align}
where we set $\ell_n := (\log \log n)^2$ and take the integer part when dealing with non-integer indices.

We first deal with the first sum in \eqref{eq:expectT}.
Let us set $R_z^{n}:=  \sum_{k=0}^{\ell_n} \rho_z^{(k)}$. We have that 
\[
\sum_{k=0}^{\ell_n} \rho_z^{(k)}  \ind_{\{ \rho_z^{(k)} \leq \gep_n \e^{-\lambda k/2} d_n  \}}
	\leq R_z^{n} \ind_{\{ R_z^{n} \leq \ell_n \gep_n d_n \}} 
	=: \bar  R_z^{n}.
\]
For any fixed $1-\ga<\gd'< 1$, we can bound
\begin{align}
\bbP\Big( \sum_{z=-C_n}^{n} \bar  R_z^{n}  > (\gep_n)^{\gd'} d_n \Big) 
	\leq  3 \ell_n \bbP \Big( \sum_{j=- C_n/3\ell_n }^{n/3\ell_n} \bar R_{3 j \ell_n}^n > \frac{(\gep_n)^{\gd'} }{3 \ell_n}  d_n \Big)
\label{splitindependent}
\end{align}
where we used a union bound for the last inequality, splitting the sum $\sum_{z=-C_n}^{n} \bar R_z^n $ as {$\sum_{x=0}^{3\ell_n-1} \sum_{j=-C_n/3\ell_n}^{n/3\ell_n-1} \bar R_{3j \ell_n+x}$}. 
Now, we notice that the $\bar R_{3j \ell_n }^{n}$ for $j\in \{-C_n/3\ell_n, \ldots, n/3\ell_n\}$ are independent. Also, thanks to \eqref{asymprhok} and a union bound, we have that $\bbP(R_{0}^n>t) \leq cst.\, \psi(t) t^{-\ga}$. Them, using a Fuk-Nagaev inequality (see \cite[Thm.~1.1]{Nag79}), we have that there is a constant $c$ such that for any $m\geq 1$ and any $y \leq x$
\[
\bbP\Big( \sum_{j=1}^{m} R_{3j\ell_n}^n > x \, ,\,  \max_{1\leq j\le m}  R_{3j \ell_n}^n \leq y \Big) \leq  \Big( c m \frac{y}{x} \, \psi(y) y^{-\ga} \Big)^{x/y}\,.
\]
We therefore get that there are constants $c,c',c''>0$ such that \textit{}
\begin{align*}
 \bbP \Big( \sum_{j=- C_n/3\ell_n }^{n/3\ell_n} \bar R_{3 j \ell_n}^n > &\frac{(\gep_n)^{\gd'} }{3 \ell_n}  d_n \Big)
	\leq  \Big( c \frac{n}{\ell_n} \frac{\ell_n \gep_n}{ (\gep_n)^{\gd'} /\ell_n }  \psi\big( \ell_n \gep_n d_n \big)  \big(  \ell_n \gep_n d_n \big)^{-\ga}\Big)^{  (\gep_n)^{\gd'-1} /(3 \ell_n^2) } \\
	&\leq  \Big( c' n \ell_n  (\gep_n)^{ (1-\ga - \gd')/2} \psi(d_n) d_n^{-\ga} \Big)^{ (\gep_n)^{-(1-\gd')/2} } 
	\leq  c''  \e^{- (\gep_n)^{-(1-\gd')/2} } \, .
\end{align*}
For the second inequality, we used Potter's bound to get that $\psi (\ell_n \gep_n d_n  ) $ is bounded by a constant times $ \ell_n^{\ga} (\gep_n)^{ (\gd' +\ga-1) /2 } \psi(d_n)$. We also used that $(3 \ell_n)^{-2}  \leq (\gep_n)^{-(1-\gd')/2}$ for $n$ large enough (recall $\gep_n= q_n^{-\gd}$ with $q_n = (\log n)^{1/4}$). For the last inequality, we used the definition~\eqref{def:dn} of $d_n$ to get that for $n$ large enough the term inside the parenthesis is smaller than  a constant times $\ell_n (\gep_n)^{(1-\ga-\gd)/2}$, which can be made arbitrarily small by taking $n$ large.

Going back to \eqref{splitindependent}, and using that $\ell_n =o(\e^{ \frac12(\gep_n)^{-(1-\gd')/2}})$ we get that
\begin{equation}
\bbP\Big( \sum_{z=-C_n}^{n} \bar  R_z^{n}  > (\gep_n)^{\gd'} d_n \Big)
	 \leq  c'' \e^{- \frac12(\gep_n)^{-(1-\gd')/2}}\, .
\end{equation}

We can treat the second term in \eqref{eq:expectT} similarly.
Setting $V_z^n := \sum_{k=\ell_n}^{C_n} \rho_z^{(k)}$, we get that 
\[
\sum_{k=\ell_n }^{C_n} \rho_z^{(k)} \ind_{\{ \rho_z^{(k)} 
	\leq  \e^{-\lambda \ell_n/2 } d_n \}}  \leq V_z^n \ind_{\{V_z^n \leq C_n \e^{- \lambda \ell_n /2}d_n\}} =: \bar V_z^n \, .
\]
Similarly as above, with $C_n$ playing the role of $\ell_n$ and $\e^{- \lambda \ell_n /2}$ playing the role of $\gep_n$ (note that  $C_n = o(\e^{\lambda \ell_n /2})$), we obtain that for $n$ large enough
\begin{equation}
\bbP\Big( \sum_{z=-C_n}^{n} \bar  V_z^{n}  > \e^{- \gd' \lambda \ell_n /2} d_n \Big) 
	\leq  c'' \e^{- \frac12\e^{(1-\gd')\lambda \ell_n /4} }
	\leq   c'' \e^{- \frac12(\gep_n)^{-(1-\gd')/2}}\, .
\end{equation}

Going back to \eqref{eq:expectT}, by a union bound we get that for any $1-\ga< \gd' <1$,
\begin{align}
\label{eq:something}
\bbP\Big( E^{\go} \big[ T_n-\sum_{j\in \cJ_n} T(\cB_j)   \big] >  3 (\gep_n)^{\gd'} d_n \, ; \cH_n^c\Big) 
	\leq  2 c'' \e^{- \frac12(\gep_n)^{-(1-\gd')/2}}  \to 0\, .
\end{align}
Using Markov's inequality, we get that on $\cH_n^{c}$
\[
P^{\go} \Big( \frac{1}{d_n} \big|T_n-\sum_{j\in \cJ_n} T(\cB_j)\big|  > \gep_n^{(1-\ga)/2} \Big) \leq  \gep_n^{-(1-\ga)/2} \frac{1}{d_n}E^{\go} \big[ T_n-\sum_{j\in \cJ_n} T(\cB_j)   \big] \, ,
\]
which thanks to \eqref{eq:something} is smaller than $\gep_n^{(1-\ga)/2}$ for $n$ large enough (recall $\gd'>1-\ga$), with $\bbP$-probability going to $1$.
Since $\bbP(\cH_n)\to 0$ by Lemma~\ref{annoying}, we get the conclusion of Proposition~\ref{blocchetti}.
\end{proof}


\section{Crossing times of trapping triblocks}
\label{sec:crossingontraps}

Notice that Lemma~\ref{disgiunti} gives that to each trapping block $\cB$ can be associated a unique site $x_{\cB}$ such that  $x_{\cB}$ is a  $n$-trap.
We denote $\rho_{\cB}:= \rho_{x_{\cB}}$ the depth of the trap associated with~$x_\cB$.

In this section we show that, for a trapping block, the random variable $T(\cB)/\rho_{\cB}$ can be well approximated by another random variable $\tau_\cB$, given by
\begin{align}\label{semplificare}
\tau_\cB=\tau_\cB(\go):=\theta_{\cB} \sum_{j=1}^{G(p_{\cB})}   E_j \, .
\end{align}
Let us explain each term appearing in \eqref{semplificare}:
\begin{itemize}
\item[$p_{\cB}$]$= P_{x_{\cB}+1}^\go( T_{x_{\cB}} > T_{x_{\cB}+ C_n})$ is the probability, starting from $x_\cB+1$, of never returning to $x_\cB$ (this is also  the escape probability from $x_\cB+1$, recall the non-backtracking property of Proposition~\ref{kingofthenorth}). Notice that this only depends on the environment to the right of $x_\cB+1$ that is in $\cB$.

\item[$G(p_\cB)$] is a geometric random variable of parameter $p_\cB$.
This will be coupled with the random variable that counts the number of times we cross the edge $(x_\cB+1,x_\cB)$ from the right to the left, that is, the number of times we fall anew in the trap. 

\item[$\theta_\cB$]$=\theta_\cB(\go)$ is equal to $E^{\bar \go}[\Theta_x]$,  where $\Theta_x$ is $1$ plus the time that it takes for $X_j$ to go from $x_\cB-1$ to $x_\cB$, and $\bar\go$ is $\go$ with all conductances to the left of $x_{\cB}-C_n$ replaced by $0$  (recall again the non-backtracking property of Proposition~\ref{kingofthenorth}). Notice that $\theta_\cB$ depends only on the environment to the left of $x_\cB$ that is in $\cB$.

\item[$(E_j)_{j\geq 1}$] is a sequence of i.i.d.~exponential random variables of parameter $1$, independent of $G(p_\cB)$ and $\theta_\cB$. 
\end{itemize}

We have written $\tau_{\cB}$ as \eqref{semplificare} in order to make explicit where the different terms come from, but one realizes (for instance computing the characteristic function) that a geometric sum of independent exponential r.v.s is itself an exponential random variable. We can therefore rewrite $\tau_{\cB}$ as
\begin{equation}\label{app:tau}
\tau_{\cB} = \frac{\theta_{\cB}}{p_{\cB}}\ \mathbf{e}_{\cB} \, ,
\end{equation}
where $\mathbf{e}_\cB \sim \mathrm{Exp}(1)$ is independent of $p_\cB$ and $\theta_\cB$.

\begin{proposition}\label{trappolone}
Let $\cB=\cB_j$ be a good trapping $n$-triblock (recall  the Definition~\ref{def:goodtrapping} of~$\bar \Gamma_n$). There exists a coupling between $(X_j)_{j\in\N}$ and $\tau_{\cB}$ such that for any $\gd\in(0,1)$
\begin{align}\label{paolo}
\sup_{\go\in \bar \gG_n} 
	P^\go\Big(\Big|\frac{T(\cB)}{\rho_\cB}-\tau_{\cB}\Big|>\gd  , A_n\Big)
		\leq  2 \e^{- \e^{q_n}} +  c \,\gd^{-2} \e^{50 q_n} \rho_{\cB}^{-1} 
\end{align}
where  $A_n$ is the event that $(X_j)_{j\in\N}$ never backtracks more than $C_n$.
\end{proposition}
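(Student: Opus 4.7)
The plan is to decompose the path of $X$ inside $\cB$ into independent excursions from the trap site $x_\cB$, and to match the resulting sums with the building blocks of $\tau_\cB$. Let $\sigma_k$ be the time of the $k$-th visit to $x_\cB+1$ and let $G$ be the last such index before the walk exits $\cB$ to the right; the strong Markov property yields $G \sim \mathrm{Geom}(p_\cB)$, with $p_\cB$ precisely the escape probability appearing in \eqref{semplificare}. Between $\sigma_{k-1}$ and $\sigma_k$ the walk steps left to $x_\cB$, performs $K_k - 1$ left-excursions from $x_\cB$ of durations $\Theta_k^{(i)}$, and jumps back to $x_\cB+1$. Again by strong Markov, $(K_k)_{k \leq G}$ are i.i.d.\ $\mathrm{Geom}(\omega_{x_\cB})$ with $\omega_{x_\cB} = (1+\rho_\cB)^{-1}$, independent of $G$ and of the $(\Theta_k^{(i)})$, while conditionally on the environment each $\Theta_k^{(i)}$ has mean $\theta_\cB$. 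The hitting time of $x_\cB$ from $jC_n$ and the exit time from $x_\cB+1$ to $(j+2)C_n$ are both at most $\e^{20 q_n}$ with $P^\go$-probability at least $1-\e^{-\e^{q_n}}$, by standard hitting-time estimates for a random walk with conductances in $[\e^{-4q_n},\e^{4q_n}]$ on an interval of length $C_n$; after division by $\rho_\cB \geq d_n\e^{-q_n}$, they contribute only a negligible remainder $R_n$.

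The coupling is built explicitly: let $(U_k)_{k \geq 1}$ be i.i.d.\ uniform on $[0,1]$, set $E_k := -\log(1-U_k) \sim \mathrm{Exp}(1)$, and define $K_k := \lceil E_k / \log(1+\rho_\cB^{-1}) \rceil$, which gives $K_k \sim \mathrm{Geom}(\omega_{x_\cB})$ together with the deterministic bound $|K_k - \rho_\cB E_k| \leq 1 + E_k$. Plugging these same $E_k$ and $G$ into $\tau_\cB$, we obtain
\[
\frac{T(\cB)}{\rho_\cB} - \tau_\cB
\;=\; \frac{1}{\rho_\cB}\sum_{k=1}^{G}\sum_{i=1}^{K_k-1}\big(\Theta_k^{(i)}-\theta_\cB\big)
\;+\; \frac{\theta_\cB}{\rho_\cB}\sum_{k=1}^{G}\big(K_k - \rho_\cB E_k\big)
\;+\; R_n.
\]
Resistor-network computations on $\bar\go$, using the good-conductance condition from Definition~\ref{def:goodtrapping} away from $\{x_\cB-1, x_\cB\}$, give uniformly on $\bar\gG_n$ the bounds $\theta_\cB \leq \e^{10 q_n}$, $p_\cB \geq \e^{-Cq_n}$, and $E^{\bar\go}[\Theta_x^2] \leq \e^{30 q_n}$; standard tail estimates for $G \sim \mathrm{Geom}(p_\cB)$ and for the maximum of $G$ i.i.d.\ exponentials give $G + \sum_{k\leq G}E_k \leq \e^{3 q_n}$ with $P^\go$-probability at least $1-\e^{-\e^{q_n}}$.

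On this good event the second sum in the display is bounded deterministically by $\rho_\cB^{-1}\theta_\cB (G + \sum_{k\leq G} E_k) \leq \e^{15 q_n}\rho_\cB^{-1} \leq \gd/4$ for $n$ large. For the first sum, Chebyshev's inequality conditional on $(G, K_1, \ldots, K_G)$, combined with $\E[\sum_{k \leq G}K_k] = (1+\rho_\cB) p_\cB^{-1} \leq \e^{Cq_n}\rho_\cB$, gives
\[
P^\go\bigg( \frac{1}{\rho_\cB}\Big|\sum_{k=1}^G\sum_{i=1}^{K_k-1}\big(\Theta_k^{(i)}-\theta_\cB\big)\Big| > \frac{\gd}{4} \bigg)
\;\leq\; 16\,\gd^{-2} \rho_\cB^{-2}\e^{30 q_n}\, \E\Big[\sum_{k=1}^G K_k\Big]
\;\leq\; c\,\gd^{-2}\e^{50 q_n}\rho_\cB^{-1},
\]
which together with the $2\e^{-\e^{q_n}}$ failure probability of the good event yields \eqref{paolo}. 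The main obstacle is the coherent bookkeeping of the three error sources (endpoint hitting times, geometric-to-exponential coupling, and LLN deviations of the $\Theta$-sums) on a single good event, which crucially requires the uniform control on $\bar\gG_n$ of $\theta_\cB$, $p_\cB$ and $E^{\bar\go}[\Theta_x^2]$ provided by Definition~\ref{def:goodtrapping}.
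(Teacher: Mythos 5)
Your proof follows essentially the same strategy as the paper: decompose $T(\cB)$ into a geometric number $G \sim \mathrm{Geom}(p_\cB)$ of excursion attempts, each involving a geometric number of left-excursions of duration $\Theta$; couple the geometric counts with exponentials via the ceiling construction; bound the $\Theta$-fluctuations by Chebyshev on a good event; and dump the endpoint hitting/exit times into an error term. This is the paper's three-step scheme.

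There is, however, a concrete gap in the excursion bookkeeping. You describe the portion of the path between consecutive visits to $x_\cB+1$ as ``the walk steps left to $x_\cB$, performs $K_k-1$ left-excursions from $x_\cB$, and jumps back to $x_\cB+1$.'' But after the $(k-1)$-th crossing to $x_\cB+1$ the walk need not step immediately to $x_\cB$; it can wander to the right of $x_\cB$ for a while before first hitting $x_\cB$ again. This waiting time (the paper's $\Upsilon_i$, bounded in Claim~\ref{sentenzioso1} with $E^{\bar\go}[\Upsilon_2+1]\leq\e^{10q_n}$) appears once per excursion attempt, so there are $G$ of them; collectively they contribute a quantity of order $G\,\e^{10q_n}/\rho_\cB\approx\e^{20q_n}/\rho_\cB$ and cannot be absorbed into a two-sided endpoint remainder $R_n$. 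Your decomposition display is therefore an inequality, not an identity, and the $\Upsilon$-sum must be treated as a fourth error source — exactly what the paper does. The machinery you already have (Markov's inequality on a good event, with $p_\cB^{-1}\leq\e^{Cq_n}$) handles it, so this is a repairable omission rather than a wrong approach, but as written the argument is incomplete.

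Two further minor points: (i) the sum $\sum_{k=1}^G(K_k-1)\theta_\cB$ contributes an extra $-G\theta_\cB/\rho_\cB$ that you drop without mention; (ii) the deterministic bound ``$\e^{15q_n}\rho_\cB^{-1}\leq\gd/4$ for $n$ large'' is not uniform in $\gd\in(0,1)$, which the statement requires — it is true that the bound \eqref{paolo} is vacuous precisely when this fails, but you should say so, or (as the paper does) replace the deterministic bound by a Markov/Chebyshev bound that produces the $\gd^{-2}$ factor directly.
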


\noindent
We proceed in three steps:

\smallskip

Step 1. We decompose $T(\cB)$ into several pieces, see \eqref{decomposino}: a) the time it takes to arrive at position $x_\cB$, b) a geometric number $G=G(p_{\cB})$ of i.i.d.\ times to go from $x_\cB$ to $x_\cB+1$ ($G-1$ corresponds to the geometric number of times we fall anew in the trap), c) the time it takes from $x_\cB+1$ to exit the triblock to the right.

\smallskip

Step 2. We control easily the terms a) and c), which bring a small contribution to $T({\cB})$.

\smallskip

Step 3. We control the main term b), by saying that the time it takes to exit the trap is another geometric r.v.\ with parameter roughly $1/\rho_{\cB}$ (the number of trials one needs to cross the edge $(x_\cB, x_\cB+1)$ from left to right), multiplied by the time it takes between two trials (which is roughly $1+ E^{\go}_{x_\cB-1}[T_{x_\cB}] = \theta_\cB$ by the law of large numbers).

\subsection{Step 1}\label{sectioncrossinglarge}

For simplicity we suppose that $\cB=\{-C_n,\dots , 2C_n-1\}$, that the random walk starts in $0$ (we write $P^{\go}:=P_{0}^{\go}$) and we simply call $x=x_\cB$.
We notice that under the event $A_n$ the walk never visits points to the left of $-C_n$, so that we can replace each $\go$ in \eqref{paolo} by the environment $\bar\omega$ where all conductances $c_j$ with $j<-C_n$ are set equal to $0$ and the other conductances stay as before. For each $\go \in \bar \Gamma_n$ we will therefore try to bound 
\begin{align*}
P^{\go}\Big(\Big|\frac{T(\cB)}{\rho_\cB}-\tau_{\cB}\Big|>\gd  , A_n\Big)
	\leq P^{\bar \go}\Big(\Big|\frac{T(\cB)}{\rho_\cB}-\tau_{\cB}\Big|>\gd\Big)\,.
\end{align*}

Let us introduce some further notation. 
We call $G:=1+\#\{\ell :\,X_{\ell-1}=x+1,\,X_\ell=x\}$. We notice that $G$ is a geometric random variable of parameter $p_\cB$ appearing in \eqref{semplificare}.
Let $T^{(1)}:=T_{x+1}-T_{x}$ and $t_1=T_{x+1}$.
When $G\geq 2$, we iteratively define, for $2\leq i\leq G$,
\begin{align*}
t_i
	&:=\inf\{\ell>t_{i-1}:\,X_{\ell-1}=x,\,X_\ell=x+1\}\\
T^{(i)}
	&:=t_i-t_{i-1}\, .
\end{align*}
We interpret $T^{(i)}$ as the time it takes for the walk to escape from the trap for the $i$-th time.
We can now decompose $T(\cB)$ in the following way:
\begin{align}\label{decomposino}
T(\cB)=T_{x} +\sum_{i=1}^{G}T^{(i)} +\bar T_{x+1,2C_n}\,,
\end{align} 
where $\bar T_{x+1,2C_n}$ is the time it takes for $(X_j)_{j\in\N}$ to go from $x+1$ to $2C_n$ conditioned on never returning to $x$. In light of \eqref{decomposino}, for any $\go\in\bar\gG_n$ we can bound
\begin{align}\label{ddd}
P^{\bar\go}\Big(\Big|\frac{T(\cB)}{\rho_\cB}-\tau_{\cB}\Big|>\gd \Big)
	\leq D_1+D_2+D_3\,,
\end{align}
where
\begin{align*}
D_1 = P^{\bar\go}\Big(\frac{T_{x}}{\rho_\cB}>\frac{\gd}{3} \Big),\;
D_2 = P^{\bar\go}\Big(\Big|\frac{\sum_{i=1}^{G}T^{(i)}}{\rho_\cB}-\tau_{\cB}\Big|>\frac{\gd}{3}\Big) ,\;
D_3 = P^{\bar\go}\Big(\frac{\bar T_{x+1,2C_n}}{\rho_\cB}>\frac{\gd}{3} \Big)\,.
\end{align*}

\subsection{Step 2}

The first and last term are fairly easy and are treated in the same way.
For $D_1$, we use Markov's inequality to get that
\[
D_1 \leq  3 \gd^{-1} \rho_{\cB}^{-1} E^{\bar\go}[T_{x} ].
\]
Formula \eqref{natale} and \eqref{Sij} gives an explicit expression for $E^{\bar\go}[T_{x}]$. In particular, by Definition~\ref{def:goodtrapping} of a good block (in particular since $c_y \in [\e^{-4q_n}, \e^{4q_n}]$ for all $y<x-1$ and $c_{x-1}>\e^{4q_n}$), we get  that  $E^{\bar\go}[T_{x}]\leq c' C_n \e^{8q_n}$. Hence,  for $n$ large enough,
\begin{align*}
D_1
	\leq 
	c\, \gd^{-1} \e^{10q_n}\rho_{\cB}^{-1} \,.
\end{align*}

For the term $D_3$, we use the same idea:
\begin{align}
D_3 
	&\leq 3 \gd^{-1} \rho_\cB^{-1} E^{\bar\go}[ \bar T_{x+1,2C_n} ] =  3 \gd^{-1} \rho_\cB^{-1} \frac{1}{p_\cB} E^{ \bar \go}_{x+1}[ T _{2C_n}  \ind_{\{T_{2C_n}<T_{x} \}} ]	
\end{align}
Then, we can use that 
Here we have used that $E^{ \bar \go}_{x+1}[ T _{2C_n}  \ind_{\{T_{2C_n}<T_{x} \}} ]\leq  E^{ \tilde \go}_{x+1}[ T _{2C_n}   ]$, where thanks to the indicator function we have replaced $\bar \go$ by $\tilde \go$ (in which the conductances to the left of $x+1$ have been replaced by $0$), and then dropped the indicator function.
Then, formula~\eqref{natale} together with Definition~\ref{def:goodtrapping} of a good block gives that $E^{ \tilde \go}_{x+1}[ T _{2C_n} ] \leq c' C_n \e^{8 q_n}$.
We finally get that
\begin{equation}\label{termd3}
D_3 \leq c \gd^{-1} \rho_{\cB}^{-1} C_n \e^{8 q_n} /p_\cB \leq  c \gd^{-1} \rho_{\cB}^{-1} \e^{20 q_n} \, .
\end{equation}
For the last inequality, we used the formula \eqref{formulap} for $p_\cB$, together with the definition of a good block to get that $p_\cB \geq c \e^{- 8 q_n}$ (and took $n$ large enrough so that $C_n \leq \e^{4q_n}$).

\subsection{Step 3}
It remains to deal with the term $D_2$ in \eqref{ddd}, which is the most technical part of this section.
For  $i=1,\dots, G$,  we decompose $T^{(i)}$ as follows
\begin{align}\label{coniglio}
T^{(i)}=\Upsilon_i+ \Big(\sum_{j=1}^{\cG_i-1}\Theta_{x}(j,i) \Big)+1\,,
\end{align}
where $\gU_1=0$ and $\Upsilon_i:=\inf\{\ell>t_{i-1}:\,X_\ell=x+1\}-t_{i-1}$ for $i\ge 2$; $\cG_i$ is a geometric random variable with parameter $1/(1+\rho_{x})$ and $\{\Theta_{x}(j,i)\}_{j,i\in\N}$ are i.i.d.~copies of $\Theta_{x}$. In fact, we can describe $T^{(i)}$, with $i=2,\dots, G$, as follows (the discussion is similar for $i=1$): at time $t_{i-1}$ the random walk is in $x+1$ and in a time $\Upsilon_i$ it reaches $x$ for the first time after $t_{i-1}$. 
Then it jumps to the right with probability $1/(1+\rho_{x})$ (in which case  we either go to iteration $i+1$ or never return to $x$) or to the left with probability $\rho_{x}/(1+\rho_{x})$. In this second case, $\Theta_{x}(1,i)$ represents this one step plus the time it takes for $(X_\ell)$ to go from $x-1$ back to $x$. Then $(X_\ell)$ ``tries'' again to jump to $x+1$ with probability $1/(1+\rho_{x})$ (in which case  we either go to iteration $i+1$ or never return to $x$) or goes back to $x-1$ with probability $\rho_{x}/(1+\rho_{x})$, etc... This continues until the random walk manages to reach $x+1$. Since each attempt of jumping from $x$ to $x+1$ is independent from the others, the number of attempts is geometric. 
The $\Theta_{x}(j,i)$ are i.i.d.~by the Markov property. 

Thanks to the representation \eqref{coniglio}, and using i.i.d.\ exponential r.v.\  $(E_i)_{i\geq 1}$ of parameter~$1$ coupled with $\cG_i$ (by $\cG_i = \lceil E_1 \log \tfrac{\rho_{\cB}}{1+\rho_\cB} \rceil$), we can bound 

\begin{align}\label{denver}
&  D_2 
 	\leq P^{\bar\go}\Big(\sum_{i=1}^G\Big|\frac{ \Upsilon_i+\sum_{j=1}^{\cG_i-1}\Theta_{x}(j,i)+1}{\rho_\cB}- \theta_\cB E_i\Big|>\frac{\gd}{3} \Big)\nonumber\\
	&\leq P^{\bar\go}\Big(\sum_{i=1}^G \Big|\frac{ \Upsilon_i+\sum_{j=1}^{\cG_i-1}\Theta_{x}(j,i)+1}{\rho_\cB} - \theta_\cB E_i\Big|>\frac{\gd}{3} , G \leq \e^{10q_n}\Big)+P^{\bar\go}(G>\e^{10 q_n})\,.
\end{align}

For the second summand of \eqref{denver}, recall that $1+G$ is a geometric variable with parameter~$p_\cB$: we have that
$P^{\bar\go}(G>\e^{10q_n })  \leq \exp ( - p_\cB \e^{10 q_n} )$.
Now, we have already seen that $p_\cB \geq c \e^{8 q_n}$, so that for $n$ large enough we get
\begin{align}
\label{eq:boundG}
P^{\bar\go}(G>\e^{10 q_n}) \leq \exp \big( -  \e^{q_n} \big) \, .
\end{align}

We control now the first term in \eqref{denver} via the triangular inequality and  a union bound (recall that $\Upsilon_1=0$ by definition): we get that it is bounded by
\begin{align}\label{ragnarock}
\e^{10q_n} P^{\bar\go}\Big( \frac{ \Upsilon_2+1}{\rho_\cB}>\frac{\gd}{6}  \e^{-10q_n} \Big)
		+\e^{10q_n} P^{\bar\go}\Big(\Big|\frac{ 1}{\rho_\cB}\sum_{j=1}^{\cG_1-1}\Theta_{x}(j,1)- \theta_\cB \, E_1\Big|>\frac{\gd}{6} \e^{-10q_n}\Big)\,.
\end{align}
We deal with the two summands of \eqref{ragnarock} in the following claims.
\begin{claim}\label{sentenzioso1}
There is a constant $c>0$ such that,  for all  $\go \in \bar\Gamma_n$,  for all $v>0$, we have
\begin{align}\label{libellula}
P^{\bar\go}\Big(\frac{ \Upsilon_2+1}{\rho_\cB}> v \Big)
	<  c \, v^{-1}\rho_\cB^{-1} \, \e^{10 q_n} \, .
\end{align}
\end{claim}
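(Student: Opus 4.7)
The plan is to deduce the tail bound from a first-moment estimate via Markov's inequality, and then to estimate $E^{\bar\go}[\Upsilon_2+1]$ by means of the standard 1D Green's-function formula. By Markov, the claim reduces to showing $E^{\bar\go}[\Upsilon_2+1] \leq c\,\e^{10 q_n}$ uniformly over $\go \in \bar\Gamma_n$, for some constant $c$. Adopting the convention $\Upsilon_2 = 0$ on the event $\{G=1\}$ (the walk never returns to $x=x_\cB$ after time $t_1 = T_{x+1}$), the strong Markov property at $t_1$ identifies
\[
E^{\bar\go}[\Upsilon_2] \;=\; E^{\bar\go}_{x+1}\big[T_x\,\ind_{T_x<\infty}\big].
\]

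To bound the right-hand side I would split $T_x\,\ind_{T_x<\infty} \leq (T_x \wedge T_{2C_n}) + T_x\,\ind_{T_{2C_n}\leq T_x<\infty}$ and treat the two pieces separately. For the first piece, the 1D Green's-function representation (a consequence of the formulas collected in Appendix~\ref{app:formulas}) gives
\[
E^{\bar\go}_{x+1}[T_x\wedge T_{2C_n}] \;=\; \frac{R(x, x+1)}{R(x, 2C_n)}\, \sum_{z=x+1}^{2C_n-1} \pi(z)\, R(z, 2C_n),
\]
where $\pi(z) = c_{z-1}^{\gl} + c_z^{\gl}$ and $R(y, w) = \sum_{j=y}^{w-1}(c_j^{\gl})^{-1}$. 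Since $R(x,x+1)/R(x,2C_n)\leq 1$, and since the good-block bounds $c_j \in [\e^{-4q_n}, \e^{4q_n}]$ for $j \in \{x+1, \ldots, 2C_n - 1\}$ yield $\pi(z)\,R(z, 2C_n) = O(\e^{8q_n})$ uniformly in $z$, the sum is $O(C_n \e^{8q_n})=o(\e^{10 q_n})$ because $C_n = O(\log n)$. For the second piece, after applying the strong Markov property at $T_{2C_n}$, the walk must backtrack $C_n = O(\log n)$ steps from $2C_n$ to $x$ against the bias $\gl$; the explicit formula $P^{\bar\go}_{2C_n}(T_x<\infty) = \sum_{j\geq 2C_n}(c_j^{\gl})^{-1}/\sum_{j\geq x}(c_j^{\gl})^{-1}$, combined with Lemma~\ref{lem:maxrho} to control $c_j$ for $j > 2C_n$, produces a return probability of order $n^{-\gl C + o(1)}$, which even multiplied by the polynomial-in-$C_n$ expected backtracking time on the conditioned walk is negligible compared to $\e^{10q_n}$.

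The main subtlety -- and what I expect to be the chief obstacle -- concerns the wall-type case, where $c_x$ can be as small as $d_n^{-1}\,\e^{q_n^2}$ so that a naive estimate on the hitting time would blow up. The saving point is the cancellation $R(x,x+1)/R(x,2C_n) \leq 1$ already exploited above: the dangerous large resistance $1/c_x^{\gl}$ enters both the numerator $R(x,x+1)$ and the denominator $R(x,2C_n) = R(x,x+1) + R(x+1,2C_n)$, and therefore disappears from the exit-time formula. Equivalently, while a small $c_x$ suppresses the unconditional return probability $h(x+1) = P^{\bar\go}_{x+1}(T_x<\infty)$ to order $c_x$, the indicator $\ind_{T_x<\infty}$ absorbs exactly this factor, leaving the conditional expectation $E^{\bar\go}_{x+1}[T_x\mid T_x<\infty]$ of order $\e^{O(q_n)}$ and yielding the desired bound.
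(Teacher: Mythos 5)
Your first step—the Green's-function bound on $E^{\bar\go}_{x+1}[T_x\wedge T_{2C_n}]$—is correct and captures exactly the cancellation that the paper exploits. In fact the paper goes through the formula for the killed expectation \eqref{magicformula2} divided by the escape probability \eqref{classicone}, which yields $E^{\bar\go}_{x+1}[T_x\mid T_x<T_{2C_n}] = R(x,x+1)\sum_{y>x}\pi(y)R(y,2C_n)^2/(R(x+1,2C_n)R(x,2C_n))$, and this is dominated term-by-term by your expression $\frac{R(x,x+1)}{R(x,2C_n)}\sum_y\pi(y)R(y,2C_n)$ since $R(y,2C_n)\le R(x+1,2C_n)$ for $y\ge x+1$. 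The key point, that the dangerous $1/c_x^\gl$ factor cancels between the numerator and denominator (so small $c_x$ in the wall regime does no harm), is the same in both treatments and is the crux of the claim. So had you simply observed $E^{\bar\go}[\Upsilon_2]=E^{\bar\go}_{x+1}[T_x\mid T_x<T_{2C_n}]\le E^{\bar\go}_{x+1}[T_x\wedge T_{2C_n}]$, you would be done.

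The gap lies in the detour through $T_x\ind_{\{T_x<\infty\}}$. By adopting the convention $\Upsilon_2=0$ on $\{G=1\}$ and identifying $E^{\bar\go}[\Upsilon_2]$ with the \emph{unconditioned} $E^{\bar\go}_{x+1}[T_x\ind_{\{T_x<\infty\}}]$, you create the second piece $E^{\bar\go}_{x+1}[T_x\ind_{\{T_{2C_n}\le T_x<\infty\}}]$, which depends on the environment to the right of $2C_n$. But $\bar\Gamma_n$ constrains the conductances only inside the triblock, and the claim asserts a bound holding \emph{for all} $\go\in\bar\Gamma_n$ — so this piece simply cannot be bounded uniformly. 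Your attempted control of it invokes Lemma~\ref{lem:maxrho}, which is a $\bbP$-a.s.\ statement about a typical realization, not an estimate valid for every $\go\in\bar\Gamma_n$; and the phrase ``polynomial-in-$C_n$ expected backtracking time on the conditioned walk'' is asserted without proof and is in fact not clear when the right tail of the environment is arbitrary. The paper sidesteps this entirely by reading $\Upsilon_2$ (consistently with how it is used inside the event $A_n$ in the proof of Proposition~\ref{trappolone}) as the return time to $x$ \emph{given} $T_x<T_{2C_n}$, so that only the triblock environment enters; you should do the same, or equivalently bound directly by $E^{\bar\go}_{x+1}[T_x\wedge T_{2C_n}]$ and skip the second piece altogether.
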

\begin{claim}\label{sentenzioso2}
	There is  a constant $c>0$ such that, for all  $\go \in \bar\Gamma_n$,  for all $v>0$, we have
\begin{align}\label{ritorno}
P^{\bar\go}\Big(\Big|\frac{ 1}{\rho_\cB}\sum_{j=1}^{\cG_1-1}\Theta_{x}(j,1) -  \theta_\cB E_1 \Big|> v \Big)
	\leq
\e^{-\e^{q_n}}+c\,\e^{20 q_n}v^{-2}\rho_\cB^{-1}\,.
\end{align}
\end{claim}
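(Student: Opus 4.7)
The plan is to split the target quantity via the triangle inequality into a ``law of large numbers'' fluctuation and a ``geometric vs.\ exponential'' coupling error. Setting $N:=\cG_1-1$, I would write
\begin{equation*}
\frac{1}{\rho_\cB}\sum_{j=1}^{N}\Theta_{x}(j,1)-\theta_\cB E_1
=\underbrace{\frac{1}{\rho_\cB}\sum_{j=1}^{N}\bigl(\Theta_{x}(j,1)-\theta_\cB\bigr)}_{=:A}
+\theta_\cB\underbrace{\Bigl(\tfrac{N}{\rho_\cB}-E_1\Bigr)}_{=:B_0},
\end{equation*}
and bound $P^{\bar\go}(|A|>v/2)$ and $P^{\bar\go}(\theta_\cB|B_0|>v/2)$ separately.

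For the coupling error $\theta_\cB B_0$, I would use the explicit construction $\cG_1=\lceil E_1/\log(1+1/\rho_\cB)\rceil$, which produces a geometric of parameter $1/(1+\rho_\cB)$ from a unit exponential $E_1$. Since $\log(1+1/\rho_\cB)=\rho_\cB^{-1}(1+O(\rho_\cB^{-1}))$, this gives deterministically $|B_0|\le c(1+E_1)/\rho_\cB$. The bound $\theta_\cB\le c\,\e^{8q_n}$ follows from \eqref{natale}, exactly as in the proof of $D_1$, using the good-block assumption that the conductances $c_y$ with $y\neq x-1,x$ relevant to the hitting time of $x$ from $x-1$ lie in $[\e^{-4q_n},\e^{4q_n}]$ (a large $c_{x-1}$ only \emph{accelerates} the exit toward $x$). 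Truncating $\{E_1>\e^{q_n}\}$ at cost $P(E_1>\e^{q_n})=\e^{-\e^{q_n}}$ then yields $\theta_\cB|B_0|\le c\,\e^{10q_n}/\rho_\cB$, which is absorbed into the claimed bound for every $v$ for which it is not already trivial (note $\rho_\cB>d_n\e^{-q_n}\gg \e^{20q_n}$ for $n$ large, so whenever $v\le c\,\e^{10q_n}/\rho_\cB$ one has $c\,\e^{20q_n}v^{-2}\rho_\cB^{-1}\ge 1$).

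For the fluctuation term $A$, I would condition on $\cG_1$: by the strong Markov property the $\Theta_x(j,1)$ are i.i.d.\ under $P^{\bar\go}$ with mean $\theta_\cB$ and variance $\sigma_\cB^2:=\mathrm{Var}^{\bar\go}(\Theta_x)$, independently of $\cG_1$. Chebyshev's inequality conditional on $N$ gives $P^{\bar\go}(|A|>v/2\mid N)\le 4N\sigma_\cB^2/(v\rho_\cB)^2$, and averaging (using $E^{\bar\go}[N]=\rho_\cB$) produces $P^{\bar\go}(|A|>v/2)\le 16\,\sigma_\cB^2/(v^2\rho_\cB)$. The main obstacle is therefore a bound of the form $\sigma_\cB^2\le c\,\e^{20q_n}$. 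Since $\Theta_x-1$ is the hitting time of $x$ starting at $x-1$ on the birth-and-death chain with conductances in $[\e^{-4q_n},\e^{4q_n}]$ on a segment of length $O(C_n)$, I would invoke the Green-function representation of the second moment of such a hitting time (a routine consequence of the resistance formulas in Appendix~\ref{app:formulas}) to get $E^{\bar\go}[\Theta_x^2]\le c\,\bigl(E^{\bar\go}[\Theta_x]\bigr)^2\le c'\,C_n^{2}\,\e^{16q_n}\le c''\,\e^{20q_n}$ for $n$ large, using $C_n=O(\log n)\ll \e^{q_n}$. Plugging this into the Chebyshev estimate closes the proof.
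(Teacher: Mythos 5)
Your decomposition into the fluctuation term $A$ and the coupling term $\theta_\cB B_0$ is exactly the paper's, and both of your two ``easy'' steps are sound. Your treatment of $\theta_\cB B_0$ (truncate $\{E_1>\e^{q_n}\}$ and note the bound is trivial for small $v$) is a correct minor variant of the paper's, which instead bounds by $P^{\bar\go}(E_1>cv\rho_\cB\theta_\cB^{-1})$ and then uses $\e^{-z}\le c/z$. Your Chebyshev-with-averaging for $A$ (condition on $N$, then use $E^{\bar\go}[N]=\rho_\cB$) is in fact slightly cleaner than the paper's route, which truncates $\{\cG_1>\e^{2q_n}\rho_\cB\}$ at cost $\e^{-\e^{q_n}}$ and applies Chebyshev with the deterministic cutoff, picking up an extra $\e^{2q_n}$ factor that is harmless.

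The genuine gap is the variance bound. You assert that $E^{\bar\go}[\Theta_x^2]\le c\,(E^{\bar\go}[\Theta_x])^2$ with a universal constant is ``a routine consequence of the resistance formulas.'' It is not, and such an inequality should not be expected for the hitting time of $x$ from $x-1$ on a birth--death chain: a rare deep excursion to the left contributes little to the mean but a lot to the second moment. Already for the unbiased walk on $\{0,\dots,L\}$ one has $E_{L-1}[T_L]\asymp L$ while $E_{L-1}[T_L^2]\asymp L^3$, so the ratio grows with $L$; here $\lambda>0$ helps, but on a good triblock the local ratios $\rho_y$ can still be as large as $\e^{8q_n-\lambda}$, which is far from the ``strong drift'' regime where hitting times concentrate. (There is also a slip in your chain: from \eqref{formulatheta}, $\theta_\cB\le c'\e^{8q_n}$ with no $C_n$ factor, so $(\theta_\cB)^2\le c'\e^{16q_n}$; the $C_n^2$ in the claimed bound cannot come from the mean, it must come from the variance itself.) What you actually need, and what Lemma~\ref{meanvarlemma1} proves, is the cruder estimate $\mathrm{Var}^{\bar\go}(\Theta_x)\le c'C_n^2\e^{16q_n}$, obtained by writing $\Theta_x=1+\sum_{j=-C_n}^{x-1}N_j$ with $N_j$ the number of visits to $j$ before $x$, applying Cauchy--Schwarz $\Theta_x^2\le 2C_n\sum_j N_j^2$, and bounding $E^{\bar\go}[N_j^2]\le q(j)^{-2}\le c\,\e^{16q_n}$ via the escape probabilities $q(j)\ge c\,\e^{-8q_n}$ from \eqref{classicone1}. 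Since $C_n^2\le\e^{4q_n}$ for $n$ large this yields $\sigma_\cB^2\le c\,\e^{20q_n}$, which closes your Chebyshev estimate; but the inequality you sketched does not establish it.
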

We prove these two claims just below. 
Thanks to Claims \ref{sentenzioso1} and \ref{sentenzioso2}, we obtain, together with estimates \eqref{denver} and \eqref{eq:boundG}-\eqref{ragnarock}, that (using also that $\gd^{-1}\leq \gd^{-2}$)
\begin{align*}
D_2 
	\leq  2\e^{-  \e^{q_n}}+c'\,\delta^{-2}\e^{50 q_n}
	\rho_\cB^{-1} \, ,
\end{align*}
for some constant $c'>0$. This concludes the proof of Proposition \ref{trappolone}.

\begin{proof}[Proof of Claim \ref{sentenzioso1}]
We will use Markov inequality, and therefore proceed  to the calculation of the expectation of $\gU_2$. By definition,
$E^{\bar\go}[\gU_2]
	=E^{\bar\go}_{x+1}[T_{x}\,|\,T_{x}<T_{2C_n}]
	=E^{\bar\go}_{x+1}[T_{x} \ind_{\{T_{x}<T_{2C_n}\}}]/P^{\bar\go}_{x+1}(T_{x}<T_{2C_n})$. 
We use formulas \eqref{magicformula2} and  \eqref{classicone}, which yield 
\begin{align*}
E^{\bar\go}[\gU_2]
	&=\frac{1}{c_{x}^\gl}\sum_{y=x+1}^{2C_n-1}
		\pi(y)P_y(T_{x+1}<T_{2C_n})P_y(T_{x}<T_{2C_n})
	\leq \sum_{y=x+1}^{2C_n-1}\frac{c^{{\gl}}_y}{c_{x}^\gl}
	\,P_y(T_{x}<T_{2C_n})
\end{align*}
For all $y=x+1,\dots,2C_n-1$, we can use again \eqref{classicone} and bound 
\begin{align*}
\frac{c_y^\gl}{c_{x}^\gl} P_y(T_{x}<T_{2C_n})
	&= \frac{c_y}{c_{x}}\e^{\gl (y-x)}\frac{\sum_{j=y}^{2C_n-1}c_j^{-1}\e^{-\gl j}}{\sum_{j=x}^{2C_n-1}c_j^{-1}\e^{-\gl j}}
	\leq \frac{1}{c_{x}}\e^{8q_n} \frac{\sum_{j=0}^{2C_n-1-y}\e^{-\gl j}}{c_{x}^{-1}}
	= c' \,\e^{8q_n}\,,
\end{align*}
for some $c'>0$, where for the first inequality we have used the fact that in good blocks $\e^{-4q_n}<c_j<\e^{4q_n}$ for all $x<j<2C_n$ and in the denominator we simply kept the term $j=x$. We therefore obtain
\begin{align*}
E^\go[\gU_2+1]\leq 2c'C_n\e^{8q_n} \leq \e^{10 q_n}
\end{align*}
for $n$ large enough.
This gives the claim by Markov inequality.
\end{proof}

\begin{proof}[Proof of Claim \ref{sentenzioso2}]
We abbreviate $\Theta_{x}^{(j)}:=\Theta_{x}(j,1)$, and recall that $\theta_\cB = E^{\bar \go}[\Theta_x^{(1)}]$. 
By the triangular inequality, we bound the left hand side of \eqref{ritorno} by
\begin{align}\label{pony}
	P^{\bar\go}\Big(\Big|\sum_{j=1}^{\cG_1-1}\Big(\Theta_{x}^{(j)}\rho_\cB^{-1}-  \theta_\cB\rho_{\cB}^{-1} \Big)\Big|>\frac v 2\Big) 
		+ P^{\bar\go}\Big( \theta_\cB \Big|E_1-(\cG_1-1)\rho_{\cB}^{-1}\Big|>\frac v 2\Big)\,.
\end{align}

For the second term in \eqref{pony}, recall that we coupled $E_1$ with $\cG_1$ by $\cG_1 = \lceil E_1 \log \frac{\rho_{\cB}}{1+\rho_{\cB}} \rceil$: this gives that $|E_1 - (\cG_1-1)/\rho_{\cB} | \leq (2+E_1)/\rho_{\cB}$, so that the second term in \eqref{pony} is bounded by 
$
P^{\bar\go}( E_1 > c v \rho_{\cB} \theta^{-1}_\cB)
$
for some constant $c>0$.
Thanks to Lemma~\ref{meanvarlemma1} below, this is smaller than $\exp(-c''v\rho_\cB \e^{-8q_n}) \leq c''' v^{-1} \rho_{\cB}^{-1} \e^{8 q_n}$.

We are left with the first term in \eqref{pony}. We study the mean and variance of the i.i.d.~random variables $\Theta_{x}^{(j)}$, which by definition are distributed as the random variable $\Theta_{x}$, in the following lemma.
\begin{lemma}\label{meanvarlemma1}
We have, for all $\go \in \bar \Gamma_n$
\begin{align}
\theta_\cB = E^{\bar\go}[\Theta_{x}]
	&\leq c' \e^{8 q_n} \, ,\label{expectationk0}\\
\mathrm{Var}\,^{\bar\go}(\Theta_{x})
	&\leq c' C_n^2 \e^{16q_n}\label{variancek0}\,.
\end{align}
\end{lemma}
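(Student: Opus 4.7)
The plan is to compute both moments using the explicit first- and second-moment formulas for hitting times in the reversible birth-death chain on $\bar\omega$, and then to insert the uniform conductance bounds from Definition~\ref{def:goodtrapping}. Under $\bar\omega$ the walk is reflected at $x-C_n$ (because $c_{x-C_n-1}^\lambda = 0$); the good-block hypothesis gives $c_y \in [e^{-4q_n}, e^{4q_n}]$ for every $y \in \{x-C_n,\ldots,x-2\}$ and $c_{x-1} \geq e^{-4q_n}$; and $\Theta_x = 1 + (T_x - T_{x-1})$ under $P^{\bar\omega}_{x-1}$.

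For \eqref{expectationk0}, I would apply formula~\eqref{natale} (already used to bound $D_1$ and $D_3$) to write
\[
E^{\bar\omega}_{x-1}[T_x] = 1 + 2 \sum_{i=x-C_n}^{x-2} \frac{c_i^\lambda}{c_{x-1}^\lambda} = 1 + 2 \sum_{i=x-C_n}^{x-2} \frac{c_i}{c_{x-1}}\,\e^{-\lambda(x-1-i)}.
\]
Each ratio $c_i/c_{x-1}$ is bounded by $\e^{8q_n}$ thanks to the good-block bounds, while the geometric factor $\e^{-\lambda(x-1-i)}$ makes the sum over $i$ convergent uniformly in $C_n$. This gives $E^{\bar\omega}[\Theta_x] \leq c\,\e^{8q_n}$, which is \eqref{expectationk0}. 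The crucial point is that the potentially huge value of $c_{x-1}$ (for a well-type trap) enters only in the denominator, so it helps rather than hurts.

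For \eqref{variancek0}, I would decompose $\Theta_x - 1$ into excursions away from $x-1$: at each visit of the walk to $x-1$ it jumps to $x$ with probability $\omega_{x-1} = 1/(1+\rho_{x-1}) \geq c\,\e^{-8q_n}$ (by the good-block bounds on $c_{x-2}$ and $c_{x-1}$), and otherwise it performs an excursion in $[x-C_n, x-1]$. Writing $\Theta_x = 1 + \sum_{j=1}^{G} L_j$ with $G \sim \mathrm{Geom}(\omega_{x-1})$ independent of the i.i.d.\ excursion lengths $(L_j)$, Wald's identity yields
\[
\mathrm{Var}^{\bar\omega}(\Theta_x) = E[G]\,\mathrm{Var}(L_1) + \mathrm{Var}(G)\,(E[L_1])^2.
\]
The same type of hitting-time computation as in the first step gives $E^{\bar\omega}[L_1] \leq c\,\e^{8q_n}$, and an application of the explicit formula for the second moment of a birth-death hitting time in the box of length $C_n$ (whose conductance ratios are again controlled by $\e^{8q_n}$, with a geometric factor producing only $O(1)$ diagonal contribution and the $C_n^2$ factor coming from the remaining double sum over positions) gives $\mathrm{Var}^{\bar\omega}(L_1) \leq c\, C_n^2\,\e^{16 q_n}$. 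Combining with $E[G], \sqrt{\mathrm{Var}(G)} \leq c\,\e^{8q_n}$ produces \eqref{variancek0}.

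The main obstacle will be to obtain the stated $\e^{16q_n}$ power rather than a naive $\e^{32q_n}$: one must check that the dominant contribution to $\mathrm{Var}(\Theta_x)$ comes not from the product $\mathrm{Var}(G) (E[L_1])^2$ alone but from a cancellation between the two terms after using the \emph{exact} identity $E[G]^2 \mathrm{Var}(L_1) + \mathrm{Var}(G) (E[L_1])^2 \leq C_n^2 \e^{16q_n}$, which follows from the fact that the excursion lengths are bounded by the $C_n$-geometry of the box rather than by an additional geometric factor. Once this bookkeeping is done cleanly, both bounds follow.
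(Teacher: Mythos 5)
Your bound \eqref{expectationk0} is correct and matches the paper's approach: both use formula \eqref{formulatheta} and the good-block constraints $c_i\leq\e^{4q_n}$ (for $i\leq x-2$), $c_{x-1}\geq\e^{-4q_n}$, together with the geometric factor $\e^{-\lambda(x-1-i)}$ to sum over $i$.

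Your argument for \eqref{variancek0}, however, has a genuine gap, and you have correctly sensed where it lies but not resolved it. The paper writes $\Theta_x=1+\sum_{j}N_j$, with $N_j$ the visit count at $j$ before hitting $x$, applies Cauchy--Schwarz to get $(\Theta_x)^2\leq 2C_n\sum_j N_j^2$, and then uses $E^{\bar\go}[N_j^2]\leq 1/q(j)^2$ with the \emph{uniform} bound $q(j)\geq c\e^{-8q_n}$ (where $q(j)=P^{\bar\go}_j(T_x<T_j)$, controlled via \eqref{classicone1}). This gives $C_n^2\e^{16q_n}$ directly. Your excursion/Wald decomposition $\mathrm{Var}(\Theta_x)=E[G]\mathrm{Var}(L_1)+\mathrm{Var}(G)\,E[L_1]^2$ is a different route, and if you plug in the bounds you state, $E[G]\leq c\e^{8q_n}$, $\mathrm{Var}(G)\leq c\e^{16q_n}$, $E[L_1]\leq c\e^{8q_n}$, $\mathrm{Var}(L_1)\leq cC_n^2\e^{16q_n}$, you get $E[G]\mathrm{Var}(L_1)\leq cC_n^2\e^{24q_n}$ and $\mathrm{Var}(G)E[L_1]^2\leq c\e^{32q_n}$, both worse than what is claimed. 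The term $\mathrm{Var}(G)E[L_1]^2$ does in fact come out at $\e^{16q_n}$ because of the identity $\sqrt{\mathrm{Var}(G)}\,E[L_1]\leq (1+\rho_{x-1})\cdot\big(2+\tfrac{2}{c_{x-2}^\lambda}\sum_z c_{z-1}^\lambda\big)=\tfrac{2}{c_{x-2}^\lambda}\sum_z c_{z-1}^\lambda+\tfrac{2}{c_{x-1}^\lambda}\sum_z c_{z-1}^\lambda+O(\e^{8q_n})\leq c\e^{8q_n}$, but you do not write this. The real problem is $E[G]\mathrm{Var}(L_1)$: the only saving grace is that when $\rho_{x-1}$ is large, $c_{x-2}$ must be large (since $c_{x-1}\geq\e^{-4q_n}$), and a large $c_{x-2}$ makes the excursion from $x-2$ unlikely to wander deep to the left; quantitatively, the probability of even visiting a site $j<x-2$ during one excursion is bounded by $c_{x-3}^\lambda/(c_{x-2}^\lambda+c_{x-3}^\lambda)$, which shrinks exactly by the factor $1/\rho_{x-1}$ needed to compensate $E[G]$. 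To carry your route through, you would have to replace the crude bound $E[M_j^2]\leq 1/q'(j)^2$ with $E[M_j^2]\leq P_{x-2}(T_j<T_{x-1})/q'(j)^2$ and track that visiting probability explicitly. Your closing paragraph does not do this: the ``exact identity'' you write has $E[G]^2\mathrm{Var}(L_1)$ instead of $E[G]\mathrm{Var}(L_1)$, both summands are nonnegative so there is no ``cancellation between the two terms,'' and the phrase about the ``$C_n$-geometry'' does not identify the visiting-probability mechanism. So as written, the variance bound is not established. The paper's site-counting argument sidesteps the whole issue because the escape probability $q(j)$ (not a conditional excursion moment) absorbs the compensating factor automatically.
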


Let us first conclude the proof of Claim~\ref{sentenzioso2} before we turn to the proof of Lemma~\ref{meanvarlemma1}.
We have that the first term in \eqref{pony} is
\begin{align}\label{pony1}
P^{\bar\go}\Big(\Big|\sum_{j=1}^{\cG_1-1}\Big(\Theta_{x}^{(j)}-  \theta_\cB \Big)\Big|>\frac v 2\rho_{\cB}\Big)
	&\leq	P^{\bar\go}\big(\cG_1>\e^{2q_n}\rho_{\cB}\big)+\e^{2q_n}\rho_\cB\frac{\mathrm{Var}\,^{\bar\go}(\Theta_{x})}{v^2\rho_\cB^2/4}\nonumber\\
	&\leq 	\e^{-\e^{q_n}}	+c''\e^{20q_n}v^{-2}\rho_\cB^{-1}\,.
\end{align}
In the first line we have used Chebychev inequality and the independence of the $\Theta_{x}^{(j)}$, while in the second line we used the fact that $\cG_1$ is a geometric of parameter $1/(1+\rho_\cB)$ for the first term and the bound \eqref{variancek0} for the second term.
\end{proof}

\begin{proof}[Proof of Lemma \ref{meanvarlemma1}]  Recall that $\Theta_{x}$ is $1$ plus the time it takes for the random walk, starting in $x-1$, to reach $x$. 
By formula \eqref{formulatheta} and by the definition of a good triblock
we easily obtain bound \eqref{expectationk0}.

For studying the second moment of $\Theta_x$, we call $N_j$, with $j\in\{-C_n,\dots,x-1\}$, the random variable counting the number of visits to point $j$ before touching $x$, so that $\Theta_x=1+\sum_{j=-C_n}^{x-1} N_j$ and
\begin{align}\label{polipo}
(\Theta_x)^2\leq 2 C_n\sum_{j=-C_n}^{x-1}N_j^2\,.
\end{align}
Let us describe  the distribution of the random variables $N_j$: 
(i) $N_j$  is equal to $0$ with probability $P_{x-1}^{\bar\go}(T_{x}<T_j)$ (when the random walk reaches $x$ before ever visiting $j$);
(ii) $N_j$ is equal to a random variable $Y(j)$ with the remaining probability, 
where $Y(j)$ is a geometric random variable with parameter $q(j)=P_{j}^{\bar \go}(T_{x}<T_j)
$ (once $j$ is reached for the first time, the random walk returns to $j$ before touching $x$ with probability $1-q(j)$).
Hence, we get that $E^{\bar \go}[N_j^2] \leq E^{\bar \go}[Y(j)^2] \leq 1/q(j)^2$.

We notice that, on good blocks, because of formula~\eqref{classicone1},  $q(j)\geq c \e^{-8 q_n}$ for any $j\leq x-1$.
Therefore the second moment of $N_j$ is bounded by $1/q(j)^2 \leq c' \e^{16 q_n}$, 
so that taking the expectation in  \eqref{polipo} we get 
$\text{Var}\,^{\bar\go}(\Theta_{x})
	\leq c'' C_n^2 \e^{16q_n}$ as desired.
\end{proof}


\section{Reduction to a finite number of traps}
\label{sec:reduc2}

First of all, by a union bound and then Proposition \ref{trappolone}  with $\gd=  \frac{d_n \e^{-q_n}}{|\cJ_n| \rho_{\cB_j}} \leq |\cJ_n|^{-1} <1$, we get that for a.e.\ $\go$, for $n$ large enough (so that all trapping blocks are good, by Lemma~\ref{sogood})
\begin{align*}
P^{\go} \Big(  \Big| \sum_{j\in \cJ_n}T(\cB_j) -\sum_{j\in \cJ_n} \rho_{\cB_j} \tau_{\cB_j}  \Big| &\geq d_n \e^{- q_n} \Big)
	 \leq \sum_{j\in \cJ_n} P^{\go} \Big( \big| T(\cB_j) -\rho_{\cB_j} \tau_{\cB_j}  \big| \geq \frac{ d_n \e^{-q_n}  }{ |\cJ_n| } \Big) \\
	& \leq 2 |\cJ_n|  \e^{-  \e^{-q_n}} +  c  |\cJ_n| \e^{50 q_n} \Big(\frac{d_n \e^{-q_n}}{|\cJ_n|}\Big)^{-2}  \max_{j\in \cJ_n}  \rho_{\cB_j}  \, .
\end{align*}
Then, by Lemmas~\ref{disgiunti} and \ref{lem:maxrho}, we get that  $|\cJ_n|\leq \e^{5 q_n}$ and $ \max_{j\in \cJ_n}  \rho_{\cB_j} \leq d_n \e^{q_n}$ for $n$ large enough,  so that
\begin{equation}
P^{\go} \Big(  \Big| \sum_{j\in \cJ_n}T(\cB_j) -\sum_{j\in \cJ_n} \rho_{\cB_j} \tau_{\cB_j}  \Big| \geq  d_n \e^{- q_n }  \Big) \leq c' d_n^{-1}  \e^{ 70 q_n} \, .
\end{equation}
Together with Proposition~\ref{blocchetti}, we conclude that with $\bbP\otimes P^{\go}$-probability going to $1$,
\begin{equation}
\label{reductiontolargetraps}
\frac{1}{d_n} \big| T_n - \sum_{j\in \cJ_n} \rho_{\cB_j} \tau_{\cB_j} \big| \leq   \gep_n^{(1-\ga)/2} \xrightarrow{n\to\infty} 0 \, ,
\end{equation}
where we recall that $\gep_n = q_n^{-\gd}$ for some (small) fixed $\gd>0$.

Now, we show that the main contribution to $\sum_{j\in \cJ_n} \rho_{\cB_j} \tau_{\cB_j} $ comes from the blocks with the largest $\rho_{\cB}$'s (of order $d_n$).

\begin{proposition}
\label{prop:fewblocks}
For any $\eta,\eta'>0$, there exists $\gep = \gep(\eta, \eta')>0$ such that for $n$ sufficiently large
\begin{align}\label{eq:propreduction}
\bbP \otimes P^{\go} \Big(  \frac{1}{d_n} \sum_{j\in \cJ_n} \rho_{\cB_j} \tau_{\cB_j} \ind_{\{\rho_{\cB_j} \leq \gep d_n\}}   > \eta\,     \Big) \leq \eta' \, .
\end{align}
\end{proposition}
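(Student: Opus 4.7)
The plan is to bound the first moment of the sum and conclude by Markov's inequality. Using the identity $E^{\go}[\tau_{\cB_j}] = \theta_{\cB_j}/p_{\cB_j}$ from~\eqref{app:tau}, it would suffice to prove that
\[
\bbE \Big[ \frac{1}{d_n} \sum_{j\in \cJ_n} \rho_{\cB_j} \frac{\theta_{\cB_j}}{p_{\cB_j}} \ind_{\{\rho_{\cB_j} \leq \gep d_n\}} \Big] \leq \eta \eta'
\]
for $\gep$ small. The key input is the announced Lemma~\ref{lem:tauB}, which we would use as a uniform bound $\bbE[\theta_{\cB}/p_{\cB} \mid \rho_{\cB}=t] \leq M$ valid for all $t\geq d_n\e^{-q_n}$. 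This is plausible under Assumption~\ref{assumptionA}: by Proposition~\ref{prop:conditional1}, a simple trap is either a well ($c_{x-1}\gg 1$, forcing $\theta_{\cB}\approx 2$) or a wall ($c_x\ll 1$, forcing $p_{\cB}\approx 1$), and in both cases the complementary factor depends only on the surrounding typical conductances and has bounded expectation (of order $\bbE[\zeta_{\lambda}]$). The rigorous derivation would consist of plugging Proposition~\ref{prop:conditional1} into the explicit resistor-network formulas~\eqref{formulatheta} and its analogue for $p_{\cB}$ from Appendix~\ref{app:formulas}.

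Granting such a bound, translation invariance combined with the disjointness of traps (Lemma~\ref{disgiunti}) would give
\[
\bbE\Big[\sum_{j\in\cJ_n} \rho_{\cB_j} \frac{\theta_{\cB_j}}{p_{\cB_j}} \ind_{\{\rho_{\cB_j}\leq \gep d_n\}}\Big] \leq C M n\, \bbE\big[\rho_0 \ind_{\{d_n\e^{-q_n}<\rho_0\leq \gep d_n\}}\big].
\]
The one-variable expectation on the right is then computed using the tail asymptotic $\bbP(\rho_0>t)\sim\psi(t)t^{-\ga}$ from Proposition~\ref{prop:rho0}. Karamata's theorem (applicable since $\ga<1$) yields $\bbE[\rho_0 \ind_{\{\rho_0\leq b\}}] \leq \frac{C}{1-\ga}\, b\,\bbP(\rho_0>b)$, while Potter's bound (\cite[Thm.~1.5.6]{BGT89}) gives $\psi(\gep d_n)\leq C_{\kappa}\gep^{-\kappa}\psi(d_n)$ for any $\kappa>0$. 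Combined with the defining relation $n\psi(d_n)d_n^{-\ga}\to 1$ of~\eqref{def:dn}, we would obtain
\[
\frac{1}{d_n}\bbE\Big[\sum_{j\in\cJ_n}\rho_{\cB_j}\frac{\theta_{\cB_j}}{p_{\cB_j}}\ind_{\{\rho_{\cB_j}\leq \gep d_n\}}\Big] \leq C_{M,\kappa}\, \gep^{1-\ga-\kappa}.
\]
Fixing $\kappa\in (0,1-\ga)$ and then choosing $\gep$ small enough would conclude the argument via Markov's inequality.

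The main technical obstacle would be Lemma~\ref{lem:tauB} itself, namely the uniform control of $\bbE[\theta_{\cB}/p_{\cB}]$ conditionally on $\rho_{\cB}$. One must treat wells and walls simultaneously and handle possible ``moderately large'' $\rho_y$'s near the trap (inside the same triblock) that could inflate $\theta_{\cB}$ or depress $p_{\cB}$. The finiteness of $\bbE[c_0^{\ga}]$ or $\bbE[1/c_0^{\ga}]$ postulated in Assumption~\ref{assumptionA} is precisely what is needed to keep the relevant resistor-network sums bounded in expectation conditionally on a trap being present.
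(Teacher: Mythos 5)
The central step of your argument—that there is a uniform bound $\bbE[\theta_{\cB}/p_{\cB}\mid\rho_{\cB}=t]\leq M$—fails. Under Assumption~\ref{assumptionA} we have $\ga=\min(\ga_0,\ga_\infty)<1$, so at least one of $\bbE[c_0]$, $\bbE[1/c_0]$ is infinite. Writing $\xi_{\cB}=\theta_{\cB}/p_{\cB}=2(1+c_0V^{(n)})(1+\tfrac{1}{c_{-1}}W^{(n)})$ as in~\eqref{eq:writingtauB}, note that $V^{(n)}$ (resp.\ $W^{(n)}$) is independent of $(c_{-1},c_0)$, hence of $\rho_{\cB}$, and $\bbE[V^{(n)}]=\infty$ if $\bbE[1/c_0]=\infty$ (resp.\ $\bbE[W^{(n)}]=\infty$ if $\bbE[c_0]=\infty$). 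Since conditionally on $\rho_{\cB}>A_n$ the factors $\bbE[c_0\mid\rho_{\cB}>A_n]$ and $\bbE[1/c_{-1}\mid\rho_{\cB}>A_n]$ are strictly positive, the conditional expectation $\bbE[\xi_{\cB}\mid\rho_{\cB}>A_n]$ is infinite for every $n$ in the regime considered. This is not a technicality: it reflects the fact that $\tau_{\cB}$ itself is heavy-tailed with index $\ga<1$, and the limit random variable $\zeta$ from Lemma~\ref{lem:limittauB} only has a finite $\ga$-th moment (the paper proves $\bbE[\zeta^\ga]<\infty$, not $\bbE[\zeta]<\infty$—and the limiting subordinator has infinite mean). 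Your reading of ``the complementary factor $\dots$ has bounded expectation (of order $\bbE[\zeta_\lambda]$)'' presumes exactly the integrability that Assumption~\ref{assumptionA} does not give.

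Because of this, the first-moment / Markov route cannot work directly, which is precisely why the paper warns that ``we are not able to obtain directly the tail of $\rho_{\cB}\tau_{\cB}$ (they both have heavy-tail and are not independent).'' The actual proof decomposes the sum into dyadic scales $\rho_{\cB}\in[2^{-\ell}d_n,2^{-\ell+1}d_n]$, bounds the number $N_\ell$ of blocks in each scale using Lemma~\ref{lem:jth} (a Poisson-type counting estimate), and then for each scale combines a \emph{truncated} first moment with a separate tail-probability estimate from Lemma~\ref{lem:tauB}. The dyadic splitting is essential: it replaces the divergent $\bbE[\tau_{\cB}]$ by controlled quantities $\bbE[\tau_{\cB}\ind_{\{\tau_{\cB}\leq z\}}]$ together with $\bbP(\tau_{\cB}>z)$, and it is this combination (rather than a single Markov bound) that yields a quantity tending to zero as $\gep\to0$. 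Your proposal would need to be reorganized along these lines; as stated, the key inequality you rely on is false.
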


This proposition, together with \eqref{reductiontolargetraps}, shows that for any fixed $\eta,\eta'>0$, we have for $n$ large enough (recall $k_n:= n/C_n$)
\begin{equation}
\label{reductiontofew}
\frac{1}{d_n} \Big| T_n -\sum_{i =1}^{k_n} \rho_{\cB_i} \tau_{\cB_i} \ind_{\{\rho_{\cB_i} > \gep d_n\}}  \Big| \leq 2\eta
\end{equation}
with $\bbP\otimes P^{\go}$-probability at least $1-\eta'$.
We dropped the condition $j\in \cJ_n$ in the sum since the condition $\rho_{\cB_i} > \gep d_n$ ensures that $i \in \cJ_n$.

\smallskip

The difficulty here is that we are not able to obtain directly the tail of $\rho_{\cB} \tau_{\cB}$ (they both have heavy-tail and are not independent). 
Proposition~\ref{prop:fewblocks} is proved in two steps:

Step 1: We decompose the sum according to some ranges of values for $\rho_{\cB}$, and  dominate each term  by a sum of a constant number $N_{\ell}$ of i.i.d.\ random variables.

Step 2: By a union bound, we reduce the proof to controlling each term separately. In particular, an important estimate is Lemma~\ref{lem:limittauB}, which controls the tail of $\tau_{\cB}$ conditionally on having $\rho_{\cB}$ large (uniformly on the large value of $\rho_{\cB}$).

\subsection{Step 1}
We divide the contribution according to ranges of value of $\rho_{\cB_j}$:
\begin{align}
\label{eq:firstreduction}
\bbP \otimes P^{\go}  \Big(  \sum_{j\in \cJ_n} \rho_{\cB_j} \tau_{\cB_j} \ind_{\{\rho_{\cB_j}  < \gep  d_n \}}  > \eta\,   d_n     \Big)
	\leq  \bbP \otimes P^{\go}  \Big(   \sum_{\ell = \log_2( 1/\gep) }^{  2 q_n } 2^{-\ell +1}\sum_{j\in \cJ_n^{(\ell)}} \tau_{\cB_j}  > \eta \Big)
\end{align}
where we set $\cJ_n^{(\ell)} = \{ j \in \cJ_n , \,  2^{-\ell}  \leq \rho_{\cB_j} /d_n \leq 2^{-\ell +1}    \}$ (we also used that $q_n/\log 2 \leq 2q_n$).

Then, we control the number of terms in each sum, \textit{i.e.}\ we show that $|\cJ_n^{(\ell)}| \leq N_{\ell}$, for some well chosen $N_{\ell}$.
This is essentially due to the following lemma.
\begin{lemma}
\label{lem:jth}
For any $\gd'>0$, there is a constant $C_{\gd'}$ such that, for all $n\in \bbN$ and all $u \in(0,1)$, setting $i_u := \lceil u^{-\ga(1+\gd')} \rceil$,
\[
\bbP\Big(  \big| \{ 1\leq  j\leq n \, \colon \, \rho_j \geq  u d_n\} \big|   \geq  i_u  \Big) \leq  \big( C_{\gd} u \big)^{ - \gd' i_u \ga/4 } \leq 2^{- i_u} \, ,
\]
the last inequality being valid for $u$ small enough.
In the case $\ga_0=\ga_{\infty}$ (recall Assumption~\ref{mainassumption}),
we have that there is some $A>0$ such that for any $u \in (d_n^{-1/2}, 1)$
\[
\bbP\big( \big| \{ 1\leq  j\leq n \, \colon \, \rho_j \geq  u d_n\} \big|   \geq  A u^{-\ga}   \big) 
	\leq 2^{- u^{-\ga}}\, .
\]
\end{lemma}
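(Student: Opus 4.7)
At its heart, the statement is a rare-event binomial tail bound for the (almost) sum of Bernoulli variables $\ind_{\{\rho_j \ge u d_n\}}$, whose mean is of order $u^{-\ga}$ (the typical count), while $i_u \approx u^{-\ga(1+\gd')}$ overshoots this typical value by the factor $u^{-\ga\gd'}$. The two things to handle are making Potter's bound on $\psi$ quantitative, and dealing with the (weak) dependence in $(\rho_j)_{j \ge 1}$.

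\emph{Step 1: a quantitative tail estimate.} I would begin from Proposition~\ref{prop:rho0} and \eqref{def:dn}: $\bbP(\rho_0 \ge t) = \psi(t) t^{-\ga}$ with $\psi$ slowly varying, and $\psi(d_n) d_n^{-\ga} \sim 1/n$. For $u \in (0,1]$, set $p_n(u) := \bbP(\rho_0 \ge u d_n)$; then
\[ p_n(u) = u^{-\ga}\, \frac{\psi(u d_n)}{\psi(d_n)}\, \bbP(\rho_0 \ge d_n). \]
Potter's bound gives $\psi(u d_n)/\psi(d_n) \le C_\eta\, u^{-\eta}$ for any $\eta > 0$ once $u d_n$ is large enough; taking $\eta = \ga \gd'/2$ yields $p_n(u) \le C_{\gd'}\, u^{-\ga(1+\gd'/2)}/n$ for $n$ large and all $u \in (0,1]$.

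\emph{Step 2: reduction to independence and binomial tail.} Since $\rho_j = \e^{-\lambda} c_{j-1}/c_j$, the families $(\rho_{2k+1})_{k\ge 0}$ and $(\rho_{2k})_{k\ge 1}$ each involve disjoint pairs of conductances, so each family is i.i.d. Writing $N_n(u) := |\{1 \le j \le n : \rho_j \ge u d_n\}|$ and splitting it along parity, a union bound followed by the elementary estimate $\bbP(\mathrm{Bin}(m,p) \ge k) \le (e m p/k)^k$ yields
\[ \bbP(N_n(u) \ge i_u) \le 2 \Big( \frac{2 e\, n\, p_n(u)}{i_u} \Big)^{i_u/2}. \]
With $i_u = \lceil u^{-\ga(1+\gd')} \rceil$ and the bound of Step~1, the parenthesis is $\le K_{\gd'}\, u^{\ga \gd'/2}$, so that after relabelling constants the right-hand side matches the form claimed in the lemma. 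The sharper bound $2^{-i_u}$ then follows by taking $u$ small enough that $K_{\gd'}^{1/2}\, u^{\ga \gd'/4} \le 1/2$.

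\emph{Step 3: the $\ga_0 = \ga_\infty$ case, and where the main difficulty sits.} In this case, Assumption~\ref{mainassumption} and Proposition~\ref{prop:rho0} assert that $v \mapsto \psi(\e^v)$ is itself regularly varying. The restriction $u \ge d_n^{-1/2}$ forces $\log(u d_n) \ge \tfrac12 \log d_n$, so regular variation of $v \mapsto \psi(\e^v)$ yields a uniform bound $\psi(u d_n)/\psi(d_n) \le K$ for $u \in (d_n^{-1/2}, 1)$; in particular $p_n(u) \le K\, u^{-\ga}/n$ \emph{without} any Potter-type loss. Running the same binomial inequality with $k = A u^{-\ga}/2$ gives $n p_n(u)/k \le 2K/A$, which is $\le 1/(2e)$ for $A = 4 e K$, whence the claimed $2^{-A u^{-\ga}}$. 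The one delicate point is exactly this last manoeuvre: the cut-off $u \ge d_n^{-1/2}$ is essential to turn the slowly varying ratio $\psi(u d_n)/\psi(d_n)$ into a bounded constant, since a Potter-type correction $u^{-\eta}$ would otherwise force the threshold back up to $u^{-\ga(1+\gd')}$ rather than $A u^{-\ga}$.
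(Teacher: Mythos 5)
Your proof is correct and follows essentially the same route as the paper: both arguments exploit the independence of $(\rho_j)$ restricted to a single parity class, apply Potter's bound to the regularly varying tail of $\rho_0$, and then invoke a binomial-type tail estimate (you use $\bbP(\mathrm{Bin}(m,p)\ge k)\le(emp/k)^k$, the paper uses $\binom{n}{k}p^k$ with $k!\ge(ck)^k$, which is the same bound). Your treatment of the $\ga_0=\ga_\infty$ case is likewise the one the paper uses, including the identification of the cutoff $u\ge d_n^{-1/2}$ as what lets regular variation of $v\mapsto\psi(\e^v)$ replace a Potter-type loss by a bounded constant.
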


\begin{proof} 
The probability we need to control is  
\begin{align*}
 \bbP\Big( \exists &\, x_1, \ldots, x_{i_u} \in \{1,\ldots, n\}:\; \rho_{x_j} \geq  u  d_n \text{ for all } j\leq i  \Big)\\
	&\leq \bbP\Big( \exists\,   x_1, \ldots, x_{i_u/2} \leq n  \text{ all odd or all even}, \, \text{s.t. }\rho_{x_j} \geq u d_n \text{ for all } j\leq i_u/2  \Big) \\
	& \leq  \binom{n}{i_u/2} \bbP\big( \rho_0 \geq u d_n \big)^{i_u/2} \, .
\end{align*}
The second inequality comes from a union bound, and from the fact that the $\rho_{x_j}$ are independent.
Then we can use that $\bbP(\rho_0>t)$ is regularly varying with exponent $\ga$, so that by Potter's bound (cf.~\cite{BGT89}) we have a constant $C'_{\gd}$ such that for any $u\in(0,1)$, $ \bbP\big( \rho_0 \geq u d_n \big) \leq C'_{\gd} u^{- \ga(1+\gd' /2)} \bbP(\rho_0 \geq d_n)$.
Then, using that $d_n$ has been chosen such that $\bbP( \rho_0 \geq d_n ) \sim 1/n$, together with the fact that $u^{- \ga(1+ \gd' /2)} \leq i_u u^{\gd' \ga /2}$, we obtain
\begin{equation}
\bbP\big(  \big| \{ 1\leq  j\leq n \, \colon \, \rho_j \geq  u d_n\} \big|   \geq  i_u  \big) \leq \frac{n^{i_u/2}}{ (i_u/2)!}  \Big( \frac{ C'_{\gd} i_u u^{\gd' \ga /2} }{n} \Big)^{i_u/2} \leq \big( C_{\gd}'' u \big)^{ - \gd' i_u \ga/4 } \, .
\end{equation}
The last inequality comes from the fact that there is a constant $c$ such that  $(i/2)! \geq (c i)^{i/2}$ for all $i\geq 1$.

In the case where $\ga_0=\ga_{\infty}$, we have that $g(u):=\psi(\e^{u})$ is regularly varying, recall Assumption~\ref{mainassumption}. Then we use the same union bound, setting $i_u = \lceil A u^{-\ga} \rceil$: using that $\bbP\big(\rho_0 \geq u d_n\big) \leq c u^{-\ga} \psi(u d_n) d_n^{-\ga}$, and that $\psi(d_n)d_n^{-\ga} \leq c/n$, we get that 
\[
\bbP\big( \big| \{ 1\leq  j\leq n \, \colon \, \rho_j \geq  u d_n\} \big|   \geq  i_u \big)
\leq \frac{n^{i_u/2}}{ (i_u/2)!} \Big( \frac{i_u /A}{n} \frac{\psi(u d_n)}{\psi(d_n)}\Big)^{i_u/2} 
\leq \Big( c' A^{-1} \Big)^{i_u/2}\,.
\]
For the last inequality, we use that  $\log d_n \geq \log (ud_n) \geq \frac12 \log d_n$ for any$u \in (d_n^{-1/2}, 1)$, so that $\psi(u d_n) \leq c \psi(d_n)$ since $t \mapsto \psi(\e^{t})$ is regularly varying.
Having fixed $A$ sufficiently large in the above display gives the conclusion of the Lemma.
\end{proof}

\smallskip

Now, let $N_{\ell}:= 2^{ \ga \ell (1+\gd')}$ if $\ga_0\neq \ga_{\infty}$ and $N_{\ell} := A 2^{\ga \ell}$ if $\ga_0 =\ga_{\infty}$ (in any case $N_{\ell} \geq 2^{\ga\ell}$):  thanks to Lemma~\ref{lem:jth}, we have that for $\ell$ large
\[
\bbP \big( |\cJ_n^{(\ell)}| \geq N_{\ell} \big)
\leq \bbP\Big( \big| \{j \colon  \rho_{\cB_j} \geq 2^{-\ell} d_n\}\big| \geq N_{\ell} \Big)
\leq 2^{-N_{\ell}} \, .
\]
Hence, we get that
\begin{equation}
\label{def:Hell}
 \bbP \otimes P^{\go}\Big(  \sum_{j\in \cJ_n^{(\ell)}} \tau_{\cB_j}   > \cH_{\ell}    \Big) \leq 2^{-2^{\ga \ell}}  \, \qquad \text{with } \cH_{\ell} :=  \sum_{i=1}^{N_{\ell}}  \tilde\tau_{\cB_i}  \, ,
\end{equation}
where  $\tilde \tau_{\cB_i} := \tilde \tau_{\cB_i}^{(\ell)}$ are i.i.d.\ random variables, distributed as $\tau_{\cB_j}$ conditioned on $j \in \cJ_n^{(\ell)}$.

Going back to \eqref{eq:firstreduction}, we therefore get that
\begin{equation}
\label{endofstep1}
 \bbP \otimes P^{\go}  \Big(\sum_{\ell = \log_2( 1/\gep) }^{2 q_n}2^{-\ell+1}\sum_{j\in \cJ_n^{(\ell)}} \tau_{\cB_j}  > \eta \Big)
 	\leq \bbP \otimes P^{\go}  \Big(   \sum_{\ell = \log_2( 1/\gep) }^{2 q_n } 2^{-\ell+1}  \cH_{\ell}  > \eta  \Big)
 		+ \sum_{\ell \geq \log_2(1/\gep)} 2^{-2^{\ga \ell}} \, ,
\end{equation}
the last sum being bounded by a constant time $2^{-1/\gep^{\ga}}$, which goes to $0$ as $\gep\to 0$.

\subsection{Step 2.}
Using a union bound and Markov's inequality, we have
\begin{align}
\bbP \otimes P^{\go}  \Big(   \sum_{\ell = \log_2( 1/\gep) }^{  2 q_n }  2^{-\ell+1}  \cH_{\ell}    > \eta  \Big)
	\le & \sum_{\ell = \log_2(1/\gep)}^{ 2 q_n} \bbP \otimes P^{\go}\big(   \cH_{\ell}    >    2^{\ell}  \big) \notag\\
 	& + \frac{2}{\eta} \sum_{\ell = \log_2(1/\gep)}^{2 q_n }2^{-\ell} \bbE \otimes E^{\go}\big[  \cH_{\ell} \ind_{\{ \cH_{\ell}    \leq     2^{\ell}   \}}  \big] \, .
\label{unionbound}
\end{align}
We treat the two terms separately.

\smallskip
\noindent
{\bf  First term in \eqref{unionbound}.}
We need to bound the probability $\bbP \otimes P^{\go}\big( \cH_{\ell} >    2^{\ell}  \big)$, with $\cH_{\ell} = \sum_{i=1}^{N_{\ell}} \tilde \tau_{\cB_i}$.
We can use that $\bbP(j \in \cJ_n^{(\ell)})$ is bounded below by a constant times $\bbP(\rho_{\cB_j} > 2^{-\ell} d_n)$, so that we get that $\bbP\otimes P^{\go} \big( \tilde \tau_{\cB} >t \big)  \leq c\,  \bbP\otimes P^{\go} \big( \tau_{\cB} >t  \mid \rho_{\cB} >2^{-\ell} d_n\big)$.
We now need the following lemma (that we prove in Appendix~\ref{sec:appb2}), to control the distribution on $\tau_{\cB}$ conditionally on having $\rho_{\cB}$ large. 
\begin{lemma}
\label{lem:tauB}
Define $\xi_{\cB}:= \frac{\theta_{\cB}}{ p_{\cB}}$, so that $\tau_{\cB} = \xi_{\cB} \, \mathbf{e}_{\cB}$.
For any $\tilde \gd , \bar \gd >0$ fixed small enough,  there is some $n_0$ such that for any $n\geq n_0$ and uniformly for $1\leq t \leq n^{2\bar \gd}$, $A_n \geq n^{\frac1\ga-\bar \gd}$,
\begin{align}
\bbP \big(  \xi_{\cB} >t &\mid \rho_{\cB} \geq  A_n \big) 
\leq  c t^{-2 \ga+\tilde\gd} +  c L_0(t) t^{-\ga_0}  f_{\infty}(A_n) + c L_{\infty}(t) t^{-\ga_{\infty}}  f_0(A_n) \, ,
\end{align}
with $f_{\infty}(A_n):=  \bbP \big( c_{-1} >A_n \big)  /  \bbP(\rho_0 >A_n) $ and $f_0(A_n):=\bbP \big( 1/c_0 >A_n \big)  /  \bbP(\rho_0 >A_n) $.
We also have that
\begin{align}
\bbP\otimes P^{\go} \big(  \tau_{\cB} >t &\mid \rho_{\cB} \geq  A_n \big) 
\leq  c t^{-2 \ga+\tilde\gd} +  c L_0(t) t^{-\ga_0}  f_{\infty}(A_n) + c L_{\infty}(t) t^{-\ga_{\infty}}  f_0(A_n) \, ,
\end{align}
\end{lemma}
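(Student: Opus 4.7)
The plan is to exploit the factorization $\xi_{\cB}=\theta_{\cB}/p_{\cB}$, in which $\theta_{\cB}$ is a function only of the conductances to the left of $x_{\cB}$ in the triblock (depending crucially on $c_{x-1}$), whereas $1/p_{\cB}$ depends only on those to the right (and crucially on $c_x$). Conditionally on the pair $(c_{x-1},c_x)$---which alone determines $\rho_{\cB}=\e^{-\lambda}c_{x-1}/c_x$---the quantities $\theta_{\cB}$ and $p_{\cB}$ are independent, so it suffices to bound the conditional tail of $\xi_{\cB}$ for each admissible pair and then integrate against the conditional law of $(c_{x-1},c_x)$ given $\rho_{\cB}\geq A_n$, which is supplied by Proposition~\ref{prop:conditional1}.

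I would then derive pointwise tail bounds from the explicit formulas of Appendix~\ref{app:formulas}: one has $1/p_{\cB}=1+c_x\sum_{k\geq 1}c_{x+k}^{-1}\e^{-\lambda k}$, and an analogous expression for $\theta_{\cB}$ involving $c_{x-1}$ and the $c_{x-1-k}$. The geometric decay in $k$ reduces each sum to a handful of terms, so the tails are driven by the marginal tail of a single conductance. Precisely: given $c_{x-1}=u$ with $u$ large, $\theta_{\cB}$ concentrates around $2$, since any excursion away from $x$ is damped by a factor $1/u$; for $u$ of moderate size, the tail of $\theta_{\cB}$ is of order $L_{\infty}(t)t^{-\ga_{\infty}}$, coming from a sub-trap with a large conductance further to the left. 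Symmetrically, given $c_x=v$, for $v$ very small $1/p_{\cB}\approx 1$, while for $v$ of moderate size the tail of $1/p_{\cB}$ is of order $L_0(t)t^{-\ga_0}$, driven by a wall-type sub-trap on the right. The three terms of the bound now appear by splitting $\{\xi_{\cB}>t\}$ into the intersection $\{\theta_{\cB}>t^{1/2}\}\cap\{1/p_{\cB}>t^{1/2}\}$ plus the two one-sided contributions. By independence, the intersection contributes $t^{-2\ga+\tilde\gd}$ (with $\tilde\gd$ absorbing Potter-bound slack) and does not depend on $A_n$. The ``well'' branch of Proposition~\ref{prop:conditional1} (conditional weight $\sim f_{\infty}(A_n)$) pins $\theta_{\cB}$ near $2$, so only $1/p_{\cB}$ matters and yields the term $L_0(t)t^{-\ga_0}f_{\infty}(A_n)$; the symmetric ``wall'' branch yields $L_{\infty}(t)t^{-\ga_{\infty}}f_0(A_n)$. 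The statement for $\tau_{\cB}=\xi_{\cB}\mathbf{e}_{\cB}$ then follows by splitting $\{\tau_{\cB}>t\}\subset\{\xi_{\cB}>t/s\}\cup\{\mathbf{e}_{\cB}>s\}$ with $s$ a slowly growing cut-off, the second event being exponentially small.

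The main obstacle is obtaining tail estimates for $\theta_{\cB}$ that are genuinely uniform in $c_{x-1}$, since $\theta_{\cB}$ depends on the whole left half of the triblock and can inherit heavy tails from secondary $\rho_y^{(k)}$'s inside $\cB$---a manifestation of the ``$k$-distant'' well-and-wall phenomenon. Lemma~\ref{annoying} already shows that such structures are atypical, but promoting that qualitative statement to a tail bound valid uniformly in $A_n$ and in the conditioning on $\rho_{\cB}$, while tracking slowly varying functions precisely enough that all residual slack fits into the $\tilde\gd$ loss in the first term, is the delicate technical part of the argument.
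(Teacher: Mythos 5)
Your overall picture of the trapping mechanism is right (conditionally on $(c_{-1},c_0)$, $\theta_{\cB}$ and $p_{\cB}$ are independent, and the two ``one-sided'' terms of the bound come from the two branches of Proposition~\ref{prop:conditional1}), but the argument as written has two concrete gaps.

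First, the decomposition of $\{\xi_{\cB}>t\}$ at the threshold $t^{1/2}$ does not yield the claimed powers of $t$. On the one-sided event $\{\theta_{\cB}>t^{1/2},\,1/p_{\cB}\le t^{1/2}\}$ the only deterministic information is $\theta_{\cB}>t\,p_{\cB}\ge t^{1/2}$, so applying a tail bound for $\theta_{\cB}$ there only gives decay of order $L_0(t^{1/2})\,t^{-\ga_0/2}$ rather than $L_0(t)\,t^{-\ga_0}$; the same problem occurs on the other side. The paper instead uses the elementary implication $\{\theta_0/p_0>t\}\subset\{1/p_0>t/4\}\cup\{\theta_0>t/4\}\cup\{\theta_0>4,\,1/p_0>4\}$, which keeps the full power $t$ in the two main pieces; the third piece is then not a power of $t$ at all but the event $V^{(n)}W^{(n)}>3\rho_0\ge 3A_n$, bounded by $n^{-1+2\bar\gd}$ uniformly, which is negligible in the stated range of $t$. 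Relatedly, your attribution of the $t^{-2\ga+\tilde\gd}$ term to the intersection ``by independence'' is not how this term actually arises: in the paper it comes from \emph{inside} the one-factor bound of Lemma~\ref{lem:ptheta} (e.g.\ the regime $c_0>t$, in which $\rho_0>A_n$ forces $c_{-1}>tA_n$, producing $\bbP(c_{-1}>tA_n)\bbP(c_0>t)\lesssim t^{-2\ga_\infty+\tilde\gd}\bbP(\rho_0>A_n)$), not from an intersection of tails of $\theta_{\cB}$ and $1/p_{\cB}$. Since conditionally on $\rho_{\cB}\ge A_n$ these two variables are \emph{not} independent, invoking ``independence'' without first conditioning on $(c_{-1},c_0)$ is not licit anyway.

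Second, you propose to integrate pointwise tail bounds against the conditional law of $(c_{-1},c_0)$ from Proposition~\ref{prop:conditional1}, but that proposition is a weak convergence statement and provides no rate, hence no bound valid uniformly over $1\le t\le n^{2\bar\gd}$ and $A_n\ge n^{1/\ga-\bar\gd}$. You acknowledge this as ``the delicate technical part,'' but it is precisely the content of the lemma: the paper replaces the distributional limit by Lemma~\ref{lem:ptheta}, a direct quantitative estimate obtained by partitioning according to the value of $c_0$ (resp.\ $c_{-1}$) into four ranges $c_0<1/A_n$, $c_0\in[A_n^{-1},1]$, $c_0\in[1,t]$, $c_0>t$, applying Potter's bound and the definition of $d_n$ in each, and tracking the slowly varying corrections so that all slack is absorbed into $\tilde\gd$. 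This explicit computation, together with the constant/$\tfrac14$/$4$ decomposition above, is what is missing from your sketch.
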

Let us stress that, in view of Proposition~\ref{prop:rho0}, if $\bbE[1/c_0^{\ga}]<+\infty$ (resp.~$\bbE[c_{-1}^{\ga}]<+\infty$), then $ f_\infty(A_n)$ (resp.~$f_0(A_n)$) is bounded by a constant, whereas if $\bbE[1/c_0^{\ga}] = +\infty$ (resp.~$\bbE[c_{-1}^{\ga}]=+\infty$), then $ f_\infty(A_n) \to 0$ (resp.~$f_0(A_n)\to 0$).
Therefore, as a corollary, we get that for any fixed $t>1$,
\begin{align}
\limsup_{n\to+\infty} \bbP & \big(  \xi_{\cB} >t \mid \rho_{\cB} \geq  A_n \big) \notag\\
& \leq c t^{- 2\ga+\tilde \gd} + c L_0(t) t^{-\ga_0} \ind_{\{\bbE[1/c_{0}^{\ga}]<+\infty \}} +  c L_\infty(t) t^{-\ga_{\infty}} \ind_{\{\bbE[c_{-1}^{\ga}]<+\infty \}}  \,.
\label{limitingtaub}
\end{align}

\smallskip
With Lemma~\ref{lem:tauB} at hand, we decompose the probability $\bbP\otimes P^{\go} (\cH_{\ell}>z)$ for some $z>1$, according to whether there is some $1\leq i\leq N_{\ell}$ with $\tilde\tau_{\cB_i} > z$ (we will choose $z=2^{\ell}$, but the general bound will be useful). With a union bound and Markov's inequality, we get
\begin{align}
\bbP \otimes P^{\go}\Big( \sum_{i=1}^{N_{\ell}}  \tilde\tau_{\cB_i}  > z \Big)  \leq  N_{\ell}
& \bbP\otimes P^{\go} \big( \tilde \tau_{\cB} >   z\big)  + z^{-1}\,  N_{\ell}\, \bbE\otimes E^{\go} \big[ \tilde\tau_{\cB} \ind_{\{ \tilde \tau_{\cB} \leq z  \}} \big] \, .
 \label{truncated+Markov}
\end{align}

For the second term, we write
\begin{align*}
\bbE\otimes E^{\go} \big[ \tilde\tau_{\cB}  \ind_{\{ \tilde \tau_{\cB} \leq   z  \}} \big] 
	&\leq 1+  \int_{1}^{  z}   \bbP\otimes P^{\go} ( \tilde\tau_{\cB} >t ) \dd t \\
	& \leq  c   z^{ 1 - (\ga+\tilde \gd)} + c L_0( z)  z^{(1-\ga_0)} f_{\infty}(2^{-\ell} d_n)  + c L_{\infty}(  z)  z^{1-\ga_\infty} f_0(2^{-\ell} d_n)
\end{align*}
where  for the second inequality we used Lemma~\ref{lem:tauB} and integrated over $t$ using Karamata's Theorem \cite[Prop.~1.5.8]{BGT89}. We have exploited the fact that $\ga_0,\ga_{\infty}<1$ and, for the  integral $\int_1^{z} t^{- 2\ga+\tilde \gd}$, we have used that $-2\ga +\tilde \gd\leq -(\ga+\tilde \gd)$ with $\ga+\tilde \gd<1$,  for $\tilde \gd$ small enough.
Using Lemma~\ref{lem:tauB} for the first part of \eqref{truncated+Markov}, we get that  for any $z>1$
\begin{align}
\bbP  \otimes P^{\go}  \big( \cH_{\ell}  >  z \big) \leq  c' N_{\ell}\,  \Big( z^{ - (\ga+\tilde \gd)} 
+   L_0(  z)  z^{-\ga_0} f_{\infty}(2^{-\ell} d_n)  +    L_{\infty}( z)  z^{-\ga_\infty } f_0(2^{-\ell} d_n)  \Big) \, .
\label{lastgeneralbound}
\end{align}

\noindent Now, we set $z=2^{\ell}$, and we treat the first term in \eqref{endofstep1} separately the different cases.

\smallskip 

\textbullet\ If $\ga_0 > \ga_{\infty}$, then $\ga=\ga_{\infty}$ (the case where $\ga_{\infty} >\ga_0$ is symmetric) and $\bbP(\rho_0> t) \sim c L_{\infty}(t) t^{-\ga_{\infty}}$ since $\bbE[1/c_0^{\ga}] <+\infty$ (cf.~Proposition~\ref{prop:rho0}). 
We therefore get that  $f_{\infty}(A_n)$ is bounded by a constant (uniformly for $A_n \geq 1$), and that $f_{0}(A_n) \leq c A_n^{\ga_{\infty}- \ga_0 + \tilde \gd}$ (using Potter's bound).
Recalling that $N_{\ell} = 2^{\ga\ell(1+\gd')}$, and setting $z=2^{\ell}$, the left-hand side in \eqref{lastgeneralbound} is therefore bounded by a constant times 
\[
 2^{\ell ( \ga(1+\gd') - (\ga + \tilde \gd))} +  2^{\ell ( \ga(1+\gd') - \ga_0 + \tilde \gd)} +  2^{\ell (\gd' +\tilde \gd)} (2^{-\ell} d_n)^{\ga_{\infty}- \ga_0 +\tilde \gd }
 \leq 2^{- \tilde\gd \ga \ell/2 } + 2^{\ell} d_n^{-(\ga_{0}-\ga_{\infty})/2} \, ,
\]
where the last bound holds provided that $\tilde\gd$ then $\gd'$ have been fixed sufficiently small.
Going back to the first term in \eqref{unionbound}, we have that in the case $\ga_0>\ga_{\infty}$,
\begin{align}
\sum_{\ell = \log_2(1/\gep)}^{ 2 q_n }  \bbP \otimes P^{\go}\big( \cH_{\ell}>2^{\ell}  \big) 
	& \leq  c \sum_{\ell = \log_2( 1/\gep) }^{ +\infty}   2^{- \tilde\gd \ga \ell/2 } +
c d_n^{-(\ga_{0}-\ga_{\infty})/2} \sum_{\ell = 1 }^{ 2 q_n} 2^{\ell}  \notag \\
	&\leq c' \gep^{\tilde\gd \ga /2} +c' 4^{q_n} d_n^{-(\ga_{0}-\ga_{\infty})/2} \, .
\label{casega0>gainfty}
\end{align}
This can be made arbitrarily small by taking $\gep$ small enough and $n$ large.

\textbullet\ In the case $\ga_0 = \ga_{\infty} = \ga$ we have $\bbP(\rho_0>t) \sim \psi(t) t^{-\ga}$ (see~Proposition~\ref{prop:rho0}), so that $f_0(t) \sim L_0(t)/\psi(t)$ and $f_{\infty}(t) \sim L_{\infty}(t) /\psi(t)$.
Now we can use that $L_0(\e^{t})$, $L_{\infty}(\e^t)$ are regularly varying by Assumption \ref{mainassumption}, so that $\psi(\e^t)$ is regularly varying, and therefore so are $f_0(\e^{t}), f_{\infty}(\e^t)$.
Then, using that $d_n 2^{-\ell} \geq d_n^{1/2}$ for all $0\le \ell \le   q_n$, we get that there is a constant $c$ such that $f_0(2^{-\ell}d_n) \leq  c f_0(d_n)$ and $f_{\infty}(2^{-\ell}d_n) \leq c f_{\infty}(d_n)$.
Recalling that $N_{\ell}= A 2^{\ga \ell}$, we therefore get from \eqref{lastgeneralbound} (setting $z=2^{\ell}$) that there is a constant $c_A$ depending on $A$ such that
\begin{align*}
\bbP \otimes  P^{\go}\Big( \cH_{\ell} >   2^{\ell}  \Big) \leq c_{A} \big( 2^{-\tilde \gd \ell }
+  L_0(2^{\ell} ) f_{\infty}(d_n) +  L_{\infty} (2^{\ell} )  f_0 (d_n)   \big)\, .
\end{align*}
Going back to the first term in \eqref{unionbound}, we get that 
\begin{align}
\sum_{\ell = \log_2(1/\gep)}^{2 q_n } & \bbP \otimes P^{\go} \big(   \cH_{\ell}    >    2^{\ell}  \big) 
	\leq  c_{A} \sum_{\ell = \log_2( 1/\gep) }^{ +\infty}   2^{-\tilde \gd\ell  }  \notag\\
	& + c_{A} f_{\infty}(d_n) \sum_{\ell =  \log_2( 1/\gep) }^{2q_n }  L_0(2^{\ell} )  +  c_{A} f_{0}(d_n) \sum_{\ell =  \log_2( 1/\gep) }^{ 2q_n }  L_{\infty}(2^{\ell} ) \, .  
\label{casega0=gainfty}
\end{align}
The first term is bounded by a constant times $\gep^{\tilde \gd}$, and goes to $0$ as $\gep \downarrow 0$. The second and third term can be treated in an analogous way, so we focus on the second one.

\smallskip
\textbf{a)} If $\bbE[ 1/c_0^{\ga} ] <+\infty$, then  we already mentioned that $f_{\infty}(d_n)$ is bounded by a constant (thanks to Proposition~\ref{prop:rho0}). We also have that 
\begin{equation}
\sum_{\ell \geq   \log_2( 1/\gep) }  L_0(2^{\ell} ) 
	\leq c \sum_{\ell \geq   \log_2( 1/\gep) } (2^{\ell})^{\ga} \bbP(1/c_0 >2^{\ell}) 
	\leq  c' \bbE\big[  (1/c_0)^{\ga}  \ind_{\{  1/c_0 > 1/\gep  \}} \big] \, ,
\end{equation}
and hence it goes to $0$ as $\gep\to 0$.

\smallskip

\textbf{b)} If $\bbE[ 1/c_0^{\ga}] = +\infty$ but $ \bbE\big[ c_{-1}^{\ga} \big]<+\infty$, then we get from Proposition~\ref{prop:rho0} that $f_{\infty}(d_n)$ is bounded by a constant times $L_{\infty}(d_n)/L_0(d_n) \leq c (\log n)^{\gamma_{\infty} -\gamma_0} \gp_{\infty}( \log n) /\gp_0(\log n)$ with $\gamma_0 > -1 >\gamma_{\infty}$ (recall Assumption \ref{mainassumption}), so 
\begin{equation}
\label{pfffff}
f_{\infty}(d_n) \sum_{\ell =  \log_2( 1/\gep) }^{ q_n }  L_0(2^{\ell} )  
	\leq c \frac{\gp_{\infty}( \log n)  }{ \gp_0(\log n)} (\log n)^{\gamma_{\infty} - \gamma_0} \sum_{\ell =1}^{ 2q_n } \gp_0(\ell) \ell^{\gamma_0}.
\end{equation}
Using that $2q_n\leq \log n$, the last sum is bounded by a constant times $\gp_0(\log n) (\log n)^{1+\gamma_0}$, so that the second term in  \eqref{casega0=gainfty} goes to $0$ as $n\to\infty$, since $\gamma_{\infty}<-1$.

\smallskip
All together, we have obtained that, provided  $n$ is large enough and $\gep$ is small enough (how small depends on $\eta,\eta'$), \eqref{casega0=gainfty} is bounded by $\eta'/4$.

\medskip
\noindent
{\bf Second term in \eqref{unionbound}.}
We just adapt the previous argument.
Using the bound \eqref{lastgeneralbound} and integrating it over $z$ (recall $\ga+\tilde\gd ,\ga_0,\ga_{\infty}<1$), we have that
\begin{align*}
 \bbE & \otimes E^{\go}   \big[\cH_{\ell}  \ind_{\{ \cH_{\ell}    \leq     2^{\ell}   \}}  \big]
\leq 1+  \int_{1}^{2^{\ell}} \bbP  \otimes P^{\go}\big( \cH_{\ell}  >  z \big) \dd z \\
& \leq 1+ c N_{\ell} \Big( 2^{\ell (1- \ga -\tilde\gd)}
  + L_0(2^{\ell}) 2^{\ell(1-\ga_0)} f_{\infty}(2^{-\ell} d_n) 
  + L_{\infty}(2^{\ell}) 2^{\ell(1-\ga_{\infty})} f_{0}(2^{-\ell} d_n) \Big) \, .
\end{align*}
Hence we get that
\begin{align*}
2^{- \ell}  \bbE  \otimes E^{\go}&   \big[ \cH_{\ell}   \ind_{\{ \cH_{\ell}    \leq     2^{\ell}   \}}  \big] \\
& \leq  c' N_{\ell} \Big(  2^{-\ell(\ga+ \tilde \gd )} + N_{\ell}    L_0(  2^{\ell})  2^{-\ga_0 \ell } f_{\infty}(2^{-\ell} d_n)  + L_{\infty}( 2^{\ell} )  2^{-\ga_\infty \ell } f_0(2^{-\ell} d_n)  \Big)\,.
\end{align*}
This is the analogous of \eqref{lastgeneralbound}, and the proof concludes identically to what is done above.

\smallskip
All together, going back to \eqref{unionbound}, we have shown that there is a constant $C$ such that
\begin{align*}
\bbP \otimes P^{\go}  \Big(   \sum_{\ell = \log_2( 1/\gep) }^{ 2 q_n}   2^{-\ell}  \cH_{\ell}    > \eta  \Big)
\leq  \frac{C}{\eta} \big( \gep' + \gep_n  \big) \, ,
\end{align*}
with $\gep' \to 0$ as $\gep\to 0$, and $\gep_n \to 0$ as $n\to+\infty$.
This concludes the proof of Proposition~\ref{prop:fewblocks}.
\qed


\section{Conclusion of the proof of Theorem~\ref{thm:passagetime}}
\label{sec:conv}

\subsection{Convergence of the depths, positions and block-crossing times}

Recall we defined $\xi_{\cB} := \frac{\theta_{\cB}}{p_{\cB}}$, so  $\tau_{\cB}= \xi_{\cB} \, \mathbf{e}_{\cB}$ with $\mathbf{e}_{\cB}$ an exponential r.v.\ independent of the environment.
In Lemma~\ref{lem:limittauB} below, we state that under Assumption \ref{assumptionA}, conditionally on having $\rho_{\cB} \to +\infty$, $\xi_{\cB}$ converges in distribution to a r.v.\ $\zeta$, that we define as
\begin{equation}
\label{def:zeta}
\zeta:= 2  \Big( 1+  B\, { \bar c_{0}} V  +  (1-B) { \tfrac{1}{ \bar c_{-1}}} W \Big) \, ,
\end{equation}  
where:

$\ast$\ $B$ is a Bernoulli r.v.\ with parameter $q=1$ if $ \bbE[c_{-1}^{\ga}] =+\infty$; $q=0$ if $\bbE[1/c_{0}^{\ga}] = +\infty$; and  $q= \lim_{t\to+\infty} \frac{\bbE[c_{-1}^{\ga}] L_0(t) }{ \bbE[c_{-1}^{\ga}] L_0(t) + \bbE[1/c_{0}^{\ga}] L_{\infty}(t)}$ if  $ \bbE[c_{-1}^{\ga}] <+\infty $, $\bbE[1/c_{0}^{\ga}] < +\infty$.

$\ast$\ $V,W$ are defined by (see the analogous quantities for $n$ fixed in \eqref{p0}-\eqref{theta0})
\begin{equation}
V = V^{\lambda}:= \sum_{j\geq 1} \frac{1}{c_j} \e^{- \lambda (j+1)} \, , 
\qquad 
W = W^{\lambda}:=\sum_{j\geq 2} c_{-j} \e^{- \lambda(j+1)} \, .
\end{equation}

$\ast$\ If $\bbE[c_{-1}^{\ga}]<+\infty$, $\bar c_{-1}$ is a r.v.\ whose c.d.f.\ is given by $F_{\bar c_{-1}} (u) := \frac{1}{ \bbE[c_{-1}^{\ga}] } \bbE[c_{-1}^{\ga} \1{c_{-1}\leq u} ]$. If $\bbE[1/c_0^{\ga}]<+\infty$, then $1/\bar c_0$ is a r.v.\ with c.d.f.\  $F_{1/\bar c_0} (u) := \frac{1}{\bbE[1/c_0^{\ga}]}\bbE[1/c_0^{\ga} \1{1/c_0\leq u} ]$.

\begin{lemma}
\label{lem:limittauB}
Under  Assumption \ref{assumptionA}, conditionally on having $\rho_{\cB}> \gep d_n$, $\xi_{\cB}:=\frac{\theta_{\cB}}{p_{\cB}}$ converges in distribution as $n \to+\infty$ to the random variable $\zeta$ defined above in \eqref{def:zeta}.
More precisely, we have
\[
\lim_{n\to+\infty} \bbP  \big( \xi_{\cB} \in  \cdot  \mid \rho_{\cB}>\gep d_n \big) = \bbP ( \zeta\in \cdot)\, .
\]
Additionally, we have that $\bbE[\zeta^{\ga} ]<+\infty$. 
\end{lemma}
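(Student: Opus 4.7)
\textbf{Plan of proof of Lemma~\ref{lem:limittauB}.}
The strategy is to obtain an exact representation of $\xi_{\cB}=\theta_{\cB}/p_{\cB}$ as a function of the conductances in $\cB$, then feed it into the conditional limit theorem of Proposition~\ref{prop:conditional1}. First, by the classical electrical-network formulas recalled in Appendix~\ref{app:formulas}, the escape probability from $x_{\cB}+1$ can be written explicitly as
\[
\frac{1}{p_{\cB}} \;=\; 1 + c_{x}\, V^{(n)}_{x}, \qquad V^{(n)}_{x} := \sum_{k=1}^{C_n-1} \e^{-\lambda k}/c_{x+k},
\]
and similarly, using the reversibility formula for the mean hitting time of $x$ from $x-1$ in a reflected environment (with conductances to the left of $x_{\cB}-C_n$ set to $0$), one obtains
\[
\theta_{\cB} \;=\; 2 \;+\; \frac{2}{c_{x-1}}\, W^{(n)}_{x}\;+\;r_{n}, \qquad W^{(n)}_{x} := \sum_{j=1}^{C_n-1} \e^{-\lambda j}\, c_{x-1-j},
\]
where $r_n$ is a bounded remainder coming from the reflection term. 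Crucially, $V^{(n)}_{x}$ and $W^{(n)}_{x}$ depend only on the conductances $\{c_y\}_{y\neq x-1,x}$, while $c_x$ and $c_{x-1}$ sit in front as multiplicative prefactors. By monotone convergence, after translation back to the origin, $V^{(n)}_{x}\to V$ and $W^{(n)}_{x}\to W$ a.s.\ as $n\to\infty$.

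Next, I would condition on the (asymptotically unique, by Lemma~\ref{disgiunti}) position $x=x_{\cB}$ of the trap. By translation invariance, it suffices to study the law of $(c_{-1},1/c_0, \{c_y\}_{y\neq -1,0})$ conditional on $\{\rho_0>\gep d_n\}$ as $n\to\infty$. Proposition~\ref{prop:conditional1} gives the conditional limit of $(c_{-1},1/c_0)$, namely $(1-B)(\bar c_{-1},+\infty)+B(+\infty,1/\bar c_0)$ with $B\sim\mathrm{Bernoulli}(q)$, and a mild additional check (writing the joint density as a product of the marginal of $\rho_0$ with the independent environment elsewhere) shows that the rest of the environment remains i.i.d.\ with the original law $\bbP$ and asymptotically independent of $(c_{-1},c_0,B)$. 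Applying the continuous mapping theorem to the formulas for $\xi_{\cB}$: on $\{B=1\}$, the factor $W^{(n)}_{x}/c_{x-1}$ vanishes (since $c_{x-1}\ge c_x\gep d_n/ \e^{\lambda}$ while $W^{(n)}_{x}=O(\log n)$), so $\theta_{\cB}\to 2$ and $\xi_{\cB}\to 2(1+\bar c_0 V)$; on $\{B=0\}$, $c_x\to 0$ so $c_x V^{(n)}_{x}\to 0$, $1/p_{\cB}\to 1$, and $\xi_{\cB}\to 2(1+(1/\bar c_{-1})W)$. Mixing with weights $q$ and $1-q$ yields $\xi_{\cB}\Rightarrow \zeta$. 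The conclusion for $\tau_{\cB}=\xi_{\cB}\mathbf{e}_{\cB}$ follows since $\mathbf{e}_{\cB}$ is independent of everything.

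For the moment bound $\bbE[\zeta^{\ga}]<\infty$, I would use the identity (a straightforward change of variables)
\[
\bbE[\bar c_0^{\ga}] = \frac{1}{\bbE[1/c_0^{\ga}]}, \qquad \bbE[(1/\bar c_{-1})^{\ga}] = \frac{1}{\bbE[c_{-1}^{\ga}]},
\]
noting that under Assumption~\ref{assumptionA} the event $\{B=1\}$ has positive probability only when $\bbE[1/c_0^{\ga}]<\infty$, and $\{B=0\}$ only when $\bbE[c_{-1}^{\ga}]<\infty$, so the relevant expectations are finite. Since $V$ is independent of $\bar c_0$ and $W$ independent of $\bar c_{-1}$, subadditivity ($\ga<1$) gives
\[
\bbE[V^{\ga}] \le \sum_{j\ge 1}\e^{-\lambda\ga(j+1)}\bbE[c_0^{-\ga}]<\infty, \qquad \bbE[W^{\ga}] \le \sum_{j\ge 2}\e^{-\lambda\ga(j+1)}\bbE[c_0^{\ga}]<\infty,
\]
where only the summability indices relevant to the active case ($B=1$ requires $\bbE[c_0^{-\ga}]<\infty$; $B=0$ requires $\bbE[c_0^{\ga}]<\infty$) matter. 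Combining with $(a+b)^{\ga}\le a^{\ga}+b^{\ga}$ yields $\bbE[\zeta^{\ga}]<\infty$.

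The \emph{main obstacle} I anticipate is the second step: making precise that conditioning on the rare event $\{\rho_0>\gep d_n\}$ affects only the pair $(c_{-1},c_0)$ and leaves the rest of the block asymptotically i.i.d.\ and independent of the limiting pair. This requires a careful computation in the same spirit as the proof of Proposition~\ref{prop:conditional1}, decomposing the tail of $\rho_0$ according to which of $c_{-1},1/c_0$ is large, and verifying that the joint conditional law factorizes in the limit. The rest is routine continuous-mapping and moment estimation.
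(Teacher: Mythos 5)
Your proposal follows essentially the same route as the paper: write $\xi_{\cB}=\theta_{\cB}/p_{\cB}$ exactly in terms of the conductances, observe that the factors $V^{(n)},W^{(n)}$ depend only on $\{c_y\}_{y\neq x_\cB-1,x_\cB}$ and hence are independent of the conditioning event, apply Proposition~\ref{prop:conditional1} for the conditional limit of $(c_{x_\cB-1},c_{x_\cB})$, and conclude by continuous mapping. The paper writes the exact factorization $\xi_{\cB}=2\big(1+c_0V^{(n)}\big)\big(1+\tfrac{1}{c_{-1}}W^{(n)}\big)$ from \eqref{p0}--\eqref{theta0}, so the key structural insight is identical.

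Two remarks on the details. First, your formula for $\theta_{\cB}$ carries a ``bounded remainder $r_n$ coming from the reflection term'', but this term does not exist: formula \eqref{formulatheta} (derived from \eqref{natale} applied to the reflected environment $\bar\go$) gives $\theta_x=2+\tfrac{2}{c_{x-1}}\sum_{\ell=x-C_n}^{x-2}c_\ell\e^{-\lambda(x-1-\ell)}$ with no extra term. If $r_n$ were genuinely bounded but did not vanish, the limit would not be $\zeta$, so this phantom remainder should be removed (there are also a couple of harmless off-by-one slips in the exponents of $V^{(n)}_x$ and $W^{(n)}_x$, which don't affect anything). Second, the ``main obstacle'' you anticipate --- that conditioning on $\{\rho_0>\gep d_n\}$ leaves the rest of the block only ``asymptotically'' i.i.d.\ and independent --- is in fact immediate: $\{\rho_0>\gep d_n\}$ is $\sigma(c_{-1},c_0)$-measurable, and the remaining conductances $\{c_j\}_{j\neq -1,0}$ are i.i.d.\ and independent of $(c_{-1},c_0)$, so the conditional law factors exactly (not merely asymptotically). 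There is no extra computation to carry out; the paper simply records ``Notice that $V^{(n)},W^{(n)}$ are independent of $c_{-1},c_0$, hence of $\rho_0$.''

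Where you genuinely diverge from the paper is in the proof of $\bbE[\zeta^{\ga}]<\infty$: the paper cites the tail bound \eqref{limitingtaub} from Lemma~\ref{lem:tauB} (already needed for Proposition~\ref{prop:fewblocks}), whereas you compute directly. Your identities $\bbE[\bar c_0^{\ga}]=1/\bbE[1/c_0^{\ga}]$ and $\bbE[(1/\bar c_{-1})^{\ga}]=1/\bbE[c_{-1}^{\ga}]$ are correct (immediate from the tilted definition of the cdf), the mutual independence of $B,\bar c_0,\bar c_{-1},V,W$ holds by the exact factorization observed above, the subadditivity $(\sum a_i)^{\ga}\le\sum a_i^{\ga}$ with $\ga<1$ gives $\bbE[V^{\ga}]\le c\,\bbE[1/c_0^{\ga}]$ and $\bbE[W^{\ga}]\le c\,\bbE[c_0^{\ga}]$, and one checks in each case ($q=0$, $q=1$, $q\in(0,1)$) that only finite expectations appear. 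This is a self-contained and arguably more transparent route to the moment bound, at the cost of a slightly longer case analysis.
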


If $B=1$ it means that the trap is a well (roughly, $\theta_{\cB} \to 2$), while if $B=0$ it means that the trap is a wall (roughly, $p_{\cB}\to 1$).
Hence, if $q=1$ only wells can occur, while if $q=0$ only walls can occur.
The proof of Lemma~\ref{lem:limittauB} is provided in Appendix~\ref{sec:appb2}.

\smallskip
Then, we rely on Lemma~\ref{lem:limittauB} to show the following (recall $k_n = \lfloor n/C_n \rfloor$).

\begin{proposition}
\label{prop:convergence}
For any $\gep>0$, we have the following convergence in distribution:
\[
\Big\{  \Big( \frac{i}{k_n} , \frac{\rho_{\cB_i}}{d_n} ,  \xi_{\cB_i} \Big)  \colon    1 \leq i \leq k_n,  \rho_{\cB_i} > \gep d_n \Big\}
\ \Longrightarrow \
 \cP_{\gep} \, ,
\]
where $\cP_{\gep}$ is a PPP on $[0,1]\times \bbR_+ \times \bbR_+$ with intensity $\dd x \, \ga w^{-(1+\ga)} \ind_{\{w>\gep\}} \dd w \mu(\dd  z)$, and $\mu$ is the law of a random variable $\zeta$ defined in \eqref{def:zeta}.
\end{proposition}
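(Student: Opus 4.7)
The plan is to verify convergence of the three-dimensional point process to the prescribed PPP by checking convergence of Laplace functionals (equivalently, of void probabilities on a $\pi$-system generating the Borel sets of $[0,1]\times[\epsilon,\infty)\times\bbR_+$). Since the claimed intensity factorizes as $dx \cdot \alpha w^{-(1+\alpha)} \mathbf{1}_{\{w>\epsilon\}}dw \cdot \mu(dz)$, I will split the argument into two stages: first establish convergence of the planar marginal in positions and depths, and then attach the marks $\xi_{\cB_i}$ as asymptotically i.i.d.\ samples from $\mu$.

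For the planar marginal, I exploit the fact that, by Lemma \ref{disgiunti}, with probability tending to $1$ each trapping triblock $\cB_j$ contains a single index $x_{\cB_j}$ in its \emph{middle} block such that $\rho_{x_{\cB_j}} > d_n e^{-q_n}$, and $\rho_{\cB_j}$ then equals this value. Since the middle blocks are disjoint across $j$, the collection $(\rho_{\cB_j})_{1\le j \le k_n}$ reduces to an i.i.d.\ family (up to the negligible event of a double maximum), each term distributed as the maximum of $C_n$ i.i.d.\ copies of $\rho_0$. Using Proposition~\ref{prop:rho0}, the slow variation of $\psi$, and the definition \eqref{def:dn} of $d_n$, a direct computation gives
\[
k_n \, \bbP(\rho_{\cB_1}/d_n > w) \sim k_n \cdot C_n \cdot \bbP(\rho_0 > w d_n) \sim k_n \cdot \tfrac{C_n}{n}\, w^{-\alpha} = w^{-\alpha}
\]
for any $w > \epsilon$, so the classical Poisson limit theorem for triangular arrays of rare independent events yields convergence of the planar marginal to a PPP on $[0,1]\times(\epsilon,\infty)$ with intensity $dx \cdot \alpha w^{-(1+\alpha)} dw$.

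To attach the third coordinate, I rely on Lemma \ref{lem:limittauB}, which states that, conditionally on $\rho_{\cB} > \epsilon d_n$, $\xi_{\cB}$ converges in distribution to $\zeta$ (i.e.\ to $\mu$). The proof of Lemma \ref{lem:limittauB} in Appendix \ref{sec:appb2} actually shows that the convergence is insensitive to the precise value of $\rho_{\cB}$ within the range $\rho_{\cB} > \epsilon d_n$, so that each mark $\xi_{\cB_i}$ becomes asymptotically independent of both its depth $\rho_{\cB_i}/d_n$ and its position $i/k_n$. Attaching i.i.d.\ $\mu$-distributed marks then produces the full three-dimensional intensity.

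The main obstacle lies in handling the genuine dependence between $\xi_{\cB_i}$ and $\xi_{\cB_j}$ for neighboring triblocks: since $\cB_i$ and $\cB_{i+1}$ share two $n$-blocks of conductances, the marks are truly correlated when $|i-j|\le 2$. I circumvent this via Lemma \ref{disgiunti}: almost surely, any two distinct trapping triblocks are at distance at least $n e^{-5 q_n} \gg C_n$, so the environments relevant for computing $\xi_{\cB_i}$ and $\xi_{\cB_j}$ are actually disjoint whenever both $\cB_i, \cB_j \in \cJ_n$; the marks are therefore independent in the limit. This asymptotic separation is exactly what is needed to factorize the Laplace functional over the trapping triblocks and conclude.
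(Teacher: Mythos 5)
Your proposal follows essentially the same route as the paper's proof: asymptotic independence across the (well-separated) trapping triblocks via Lemma~\ref{disgiunti}, the rate computation $k_n\,\bbP(\rho_{\cB}/d_n > w) \to w^{-\alpha}$ from Proposition~\ref{prop:rho0} together with the definition~\eqref{def:dn} of $d_n$, and the attachment of marks via Lemma~\ref{lem:limittauB} (whose limiting law $\mu$ does not depend on the threshold, giving the asymptotic independence of $\xi_{\cB}$ from $\rho_{\cB}/d_n$), followed by a Poisson limit theorem. One small slip to flag: the variables $\rho_x$ within a single block are not i.i.d.\ (consecutive $\rho_x, \rho_{x+1}$ share the conductance $c_x$, and $\rho_{\cB_i}, \rho_{\cB_j}$ for $|i-j|\le 2$ likewise share a boundary conductance), so $\rho_{\cB_j}$ is not literally the maximum of $C_n$ i.i.d.\ copies of $\rho_0$---but the asymptotic $\bbP(\rho_{\cB} > td_n) \sim C_n\,\bbP(\rho_0 > td_n)$ that you actually use remains valid because, by Lemma~\ref{disgiunti}, two simultaneously large $\rho_x$'s in a block is a negligible event, which is precisely how the paper justifies the same equivalence.
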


\begin{proof}
The first remark we make is that the $(\rho_{\cB_i}, \xi_{\cB_i})_{1\leq i\leq k_n}$ are not independent: however $(\rho_{\cB_i}, \xi_{\cB_i})$ and $(\rho_{\cB_j}, \xi_{\cB_j})$ are independent as soon as $|j-i|>2$.
But because of Lemma~\ref{disgiunti}, the indices $i$ such that $\rho_{\cB_i} > \gep d_n$ are a.s.\ well separated, and we can do as if the $\rho_{\cB_i}, \xi_{\cB_i}$ appearing in $\{  ( \frac{i}{k_n} , \frac{\rho_{\cB_i}}{d_n} ,  \xi_{\cB_i} )  \colon    1 \leq i \leq k_n,  \rho_{\cB_i} > \gep d_n \} $ were independent.

We therefore only have to consider the joint distribution of $(\rho_{\cB}, \xi_{\cB})$ on the event $\rho_{\cB}>\gep d_n$.
Thanks to Lemma~\ref{lem:limittauB}, conditionally on $\rho_{\cB}>\gep d_n$,  $\xi_{\cB}$ converges in distribution to $\zeta$ defined in \eqref{def:zeta}, which does not depend on the value of $\rho_{\cB}$: on the event $\rho_{\cB}>\gep d_n$, $\xi_{\cB}$ is asymptotically independent of $\rho_{\cB}$.
It remains to see that, for any $t>0$, as $n\to+\infty$
\begin{align*}
\bbP(\rho_{\cB} \geq  t d_n) &= \bbP\big( \exists\ x\in \{1, \ldots, C_n\}, \rho_x > t d_n \big) \sim C_n \bbP(\rho_0 > t d_n) \\
&  \sim  C_n \psi(t d_n) (t d_n)^{-\ga}  \sim \frac{1}{k_n} t^{-\ga}\, .
\end{align*}
Here, we used that there is at most one trap larger than $d_n \e^{-q_n}$ in a block $\cB$, and then that $\psi(d_n )d_n^{-\ga} \sim 1/n$  by definition of $d_n$ (recall also that $k_n = n/C_n$ is the number of blocks).

All together, for any $t>\gep,v>0$, and for any $i\geq 1$, $\bbP(  \frac{\rho_{\cB_i}}{d_n} >t , \xi_{\cB_i} > v )$ is asymptotically equivalent to $\frac{1}{k_n} t^{-\ga} \bbP(\zeta >v)$. 
By the Poisson approximation (as mentioned above, we can do as if the $(\rho_{\cB_i}, \xi_{\cB_i})_{i\geq 0}$ were independent), this is enough to conclude.
\end{proof}

\subsection{Convergence in distribution of $(T_n/d_n)_{n\geq 1}$}

With Proposition \ref{prop:convergence} at hand,  and recalling that $\tau_{\cB} = \xi_{\cB} \mathbf{e}_{\cB}$ where $\mathbf{e}_{\cB}$ is an exponential random variable independent of $\go$, we easily have the convergence in distribution, under $\bbP\otimes P^{\go}$,
\begin{equation}
\label{convergencesumK}
\frac{1}{d_n}\sum_{i =1}^{k_n} \rho_{\cB_i}\tau_\cB  \ind_{\{\rho_{\cB_i} > \gep d_n\}}   \ \Longrightarrow \ \sum_{(x, w, z,r) \in \widetilde \cP} w\, z \, r \, \ind_{\{x\leq 1\}}\ind_{\{ w > \gep \}} \, ,
\end{equation}
where $\widetilde \cP$ is a PPP on $(\bbR_+)^{4}$ with intensity $\dd x \, \ga w^{-(1+\ga)}  \dd w  \, \mu(\dd z)\, \e^{-r }\dd r$.

All together, thanks to \eqref{reductiontolargetraps} and Proposition~\ref{prop:fewblocks}, and letting $\gep\downarrow 0$ (the right-hand-side of \eqref{convergencesumK} is monotone in $\gep$), we obtain the following convergence in distribution
\begin{equation}
\label{eq:compoundPoisson}
\frac{1}{d_n} T_n  \ \Longrightarrow \ \sum_{(x, w, z,r) \in \widetilde \cP} w\, z \, r \, \ind_{\{x\leq 1\}} \stackrel{(d)}{=} 
  \Big( \frac{\pi \ga}{ \sin(\pi \ga)} \bbE[\zeta^{\ga}] \Big)^{1/\ga}\,   \sum_{(x, v) \in  \cP}v \ind_{\{x\leq 1\}}
\, ,
\end{equation}
with $\cP$ a PPP on $(\bbR_+)^2$ of intensity $\dd x\, \frac{\ga}{\Gamma(1-\ga)} v^{-(1+\ga)} \, \dd v$.
Note that a crucial point, shown in Lemma~\ref{lem:limittauB}, is that $\bbE[\zeta^{\ga}]<+\infty$.

For the last identity in distribution in \eqref{eq:compoundPoisson}, we have used that $\{(x,w z r) \}_{(x,w,z, r) \in \tilde \cP}$  is a PPP on $(\bbR_+)^2$ with intensity $\dd x \ga c_{\ga, \mu}  u^{-(1+\ga)} \dd u$, where $c_{\ga,\mu} = \int_{0}^{\infty} z^{\ga} \mu(\dd z) \int_0^{\infty} r^{\ga} \e^{-r }\dd r = \bbE[\zeta^{\ga}]  \Gamma(1+\ga)$. In turn, we get that $\{(x, (c_{\ga,\mu} \Gamma(1-\ga))^{-1/\ga}w z r) \}_{(x,w,z, r) \in \tilde \cP}$  is a PPP on $(\bbR_+)^2$ with intensity $\dd x\,  \frac{\ga}{\Gamma(1-\ga)}  v^{-(1+\ga)} \dd v$ (\textit{i.e.}\ with the same law as $\cP$): this gives \eqref{eq:compoundPoisson}, using also that $\Gamma(1+\ga) \Gamma(1-\ga) = \pi \ga/ \sin(\pi \ga)$.

Now, the term $ \sum_{ (x,v)\in \cP} v \ind_{\{x\leq 1\}}$ on the right-hand-side of \eqref{eq:compoundPoisson} is a standard $\alpha$-stable subordinator at time $1$, with Laplace exponent $t^{\alpha}$ (by straighforward calculation, recall $t^{\ga} = \frac{\ga}{\Gamma(1-\ga)} \int_{0}^{\infty} (1- \e^{- t s}) s^{-(1+\ga)} \dd s$, see \cite{Ber99}).

\smallskip
As a matter of fact, our proof shows that for any fixed $u>0$,  we have the convergence in distribution, as $n\to+\infty$: under $\bbP\otimes P^{\go}$
\begin{equation}
\label{conv:finitedim}
\cT_n(u):=\frac{1}{d_n} T_{\lfloor u n \rfloor}   \Longrightarrow \Big( \frac{\pi \ga}{ \sin(\pi \ga)} \bbE[\zeta^{\ga}] \Big)^{1/\ga}\,   \,   \mathcal{S}_\ga (u) \, ,
\end{equation}
where $(\mathcal{S}_{\ga}(u))_{u\geq 0}$ is the $\ga$-stable subordinator with Laplace exponent $\log \mathbf{E}[\e^{-t\cS_{\ga}(u)}] =  u t^{\ga}$; note that $\cS_\ga(u)$  can be written, analogously to~\eqref{eq:compoundPoisson}, as
$\sum_{(x,w) \in \cP} w \ind_{\{x\leq u\}}$.

\begin{rem}
We believe that one could also obtain a quenched version of Theorem~\ref{thm:passagetime}, in analogy with Corollary~1 in \cite{ESTZ13}.
More precisely, in view of \eqref{convergencesumK}, we expect that with high $\bbP$-probability, the law of $T_n/d_n$ conditionally on $\go$ is close (for instance in Wasserstein distance) to the law of $\sum_{p=1}^{\infty}  \varpi_p \,  \mathbf{e}_p$ conditionally on $(\varpi_{p})_{p\geq 1}$, where $(\varpi_p)_{p\geq 1}$ is a Poisson Point Process of intensity $\ga \bbE[\zeta^{\ga}]  \varpi^{-(1+\ga)}$ coupled with the environment $\go$,  and $(\mathbf{e}_p)_{p\geq 1}$ are i.i.d.\ exponential r.v.\ of parameter $1$, independent of $(\varpi_p)_{p\geq 0}$. We think, though, that the convergence in distribution of Theorem \ref{thm:Biased2} already describes the essence of the limiting behavior of the walk. We prefer not to develop the technical details for the quenched result, avoiding  to make the paper heavier.
\end{rem}

\subsection{Process convergence, conclusion of the proof of Theorem~\ref{thm:passagetime}}
\label{sec:convproc}

First of all, we reduce ourselves to the study of
\begin{equation}
\label{deftildeT}
\widetilde \cT_n(u) := \frac{1}{d_n} \sum_{j=1}^{ \lfloor u k_n \rfloor} \rho_{\cB_j } \tau_{\cB_j}  \ind_{\{\cB_j \in \bar \Gamma_n\}} \, ,
\end{equation}
the \emph{coarse} version of $\cT_u^{(n)}$ (where we only keep the crossing times of good trapping blocks). Thanks to Propositions~\ref{blocchetti} and~\ref{trappolone} we know that, with probability going to $1$, the $M_1$-distance between $(\cT_u^{(n)})_{u\in [0,1]}$ and $(\widetilde \cT_{u}^{(n)})_{u \in [0,1]}$ goes to $0$ as $n\to+\infty$.
In view of Lemma~\ref{disgiunti}, $\bbP\otimes P^{\go}$-a.s., for $n$ large enough the trapping blocks are disjoint, and the non-zero terms $\rho_{\cB_{j}} \tau_{\cB_j} \ind_{\{\cB_j \in \bar \Gamma_n\}}$ are independent.
Hence, we may reduce to studying
\begin{equation}
\label{defhatT}
\widehat \cT_{n}(u) := \frac{1}{d_n} \sum_{i=1}^{\lfloor u k_n \rfloor} Y_i^{(n)} \, ,
\end{equation}
with $(Y_i^{(n)})_{i\geq 1}$ i.i.d.\ copies of $\rho_{\cB} \tau_{\cB} \ind_{\{\cB \in \bar\Gamma_n\}}$.
It is then clear that the proof of \eqref{conv:finitedim} shows that all finite-dimensional marginals of $(\widehat \cT_{n}(u))_{u\in [0,1]}$ converge to those of the $\ga$-stable subordinator $(\cS_{\ga}(u))_{u\in [0,1]}$, since for $v>u$, $(\widehat \cT_{n}(v) - \widehat \cT_n (u))$ is independent of $\widehat \cT_n(u)$ for all~$n$.

We can easily upgrade this to a process convergence: thanks to Prohorov's theorem, and since we already have the convergence of the finite-dimensional distributions, we only need to show that $\{ \widehat \cT_n , n \in \bbN\}$ is tight,  see e.g.\ \cite[Thm.~15.1]{Bill}. 
Showing the tightness is relatively standard, since we are working here with increasing processes $\widehat \cT_n$ (and actually follows from the finite-dimensional convergence): the proof is identical to that in \cite[Sec.~5]{BABC08} (or to that of \cite[Sec.~11]{FK16}). 
Tightness can also be seen as a consequence of Proposition~\ref{prop:fewblocks}, which says that with high probability (uniform in $n$) the main contribution to $\widehat \cT_{n}$ comes from jumps larger than $\gep$, \textit{i.e.}\ $\widehat \cT_{n}(u)- \frac{1}{d_n} \sum_{i=1}^{\lfloor u k_n \rfloor} Y_i^{(n)} \ind_{\{Y_i^{(n)} >\gep d_n\}}$ is uniformly smaller than $\eta$ with probability at leat $1-\eta'$ (this correspond to the (15.11) tightness criterion of \cite[Thm.~15.4]{Bill}, the part (15.10) following from finite-dimensional convergence).
This concludes the proof of Theorem~\ref{thm:passagetime}.


\section{Case of well-and-wall traps}
\label{sec:wellandwalls}

\noindent Throughout this section we will assume Assumption~\ref{assumptionB}. 

\subsection{Definition of  $k$-distant traps and their properties}

Recall that the maximal depth of a trap between $0$ and $n$ will be of order $d_n$, see  the definition~\eqref{def:dn} of $d_n$.
We introduce here the definition of $k$-traps, or well-and-wall traps, that is, traps formed by a very big conductance $c_{x-1}$ followed by a very small conductance $c_{x+k}$, making $\rho_x^{(k)} := \e^{-\lambda(k+1)} c_{x-1}/c_{x+k}$ large (we include the case $k=0$). As we shall see, the only $k$-traps that will appear are such that neither $c_{x-1}$ nor $1/c_{x+k}$ are big enough to give rise to a simple-trap (in the sense of Definition \ref{simpletraps}). Recall that we let $q_n := (\log n)^{1/4}$.

\begin{definition}\label{ktraps}
	For $k \geq 0$, a site $x$ is called a \emph{$k$-distant} (well-and-wall) $n$-trap if 
	\[
	\rho_x^{(k)} > d_n \e^{-q_n},\quad  c_{x-1}>\e^{q_n^2}\quad\,  and\, \quad  \frac{1}{c_{x+k}} >\e^{q_n^2} \,.
	\]
	We call  $\cW_x^{(k)}:= \{\rho_x^{(k)} > d_n \e^{-q_n}\} \cap \{ c_{x-1}, \frac{1}{c_{x+k}}>\e^{q_n^2} \}$ the event that $x$ is a $k$-distant trap.
\end{definition}

We now extend Definitions~\ref{def:trapping}-\ref{def:goodtrapping} to the case of $k$-distant traps. We  use the same notations as in the case of simple traps for simplicity.

\begin{definition}
	\label{def:trapping2}
	For a given $n\in\N$, we define a trapping environment as
	\[\gG_n:= \bigcup_{x=0}^{C_n -1}   \bigcup_{k=0}^{C_n-1} \cW_x^{(k)},\]
	that is, an environment with a $k$-distant $n$-trap in the first block.
	A $n$-triblock $\cB_j$ is called a trapping block if $\theta^{jC_n} \go \in \gG_n$. Let
	$\cJ_n  := \{j\in\{0,\dots,k_n\}:\,\theta^{jC_n} \go \in \Gamma_n\}$ be the set of indices of the trapping blocks.
\end{definition}

\begin{definition}\label{def:goodtrapping2}
	A trapping $n$-triblock with a $k$-distant $n$-trap at site $x$ is said to be \emph{good}  if for all $ y \neq x-1,x+k$ in the $n$-triblock,  $c_y \leq \e^{4q_n}$ and $1/c_y\leq \e^{4q_n}$.
	We denote 
	\[
	\bar \gG_n 
	:= \bigcup_{x=0}^{C_n-1}  \bigcup_{k=0}^{ C_n-1}
	\Big( \cW_x^{(k)} 
	\bigcap_{\substack{ -C_n \leq y  < 2C_n \\ y \neq x-1, x+k } } \{ c_y \in [\e^{-4q_n},\e^{4q_n}] \}  \Big) \, .
	\]
\end{definition}
\noindent Note that in the definition $\cW_x^{(k)}$, we  have that $c_{x+k} < \e^{-q_n^2} \leq \e^{4 q_n}$ and $c_{x-1}>\e^{q_n^2} > \e^{-4 q_n}$.

\subsection{Properties of $k$-distant traps}
In this section we collect some important properties of $k$-distant traps and of the new trapping blocks. Besides updating to $k$-distant traps some of the properties of simple traps, we add one new feature (property 4).
\begin{itemize}
	\item[1.]  $k$-distant traps are isolated, in fact distant by at least $n\e^{- 5 q_n}$ (Lemma \ref{disgiunti2});
	\item[2.]  $k$-distant traps are not too deep, they cannot be deeper than $d_n \e^{q_n}$ (Lemma \ref{lem:maxrho2});
	\item[3.] All trapping triblocks are good (Lemma \ref{sogood2});
	\item[4.] $k$-distant traps cannot have $k$ too large, in fact, $k< \frac6\lambda q_n$ (Lemma \ref{k-is-small});
	\item[5.] $k$-distant traps are the only annoying parts of the environment (Lemma~\ref{annoying2}).
\end{itemize}

\begin{lemma}\label{disgiunti2}
	Let $\cD_n$ be the event
	\[
	\cD_n 
	:= \bigcup_{ \substack{-n \leq x<y\leq n \\ |x-y|\leq n \e^{- 5 q_n } } }  \bigcup_{k\geq 0}  \bigcup_{k'\geq 0}  \cW^{(k)}_x \cap \cW_y^{(k')} \, .
	\]
	Then, $\bbP$-a.s.\ $\cD_n$ occurs for only finitely many $n$.
\end{lemma}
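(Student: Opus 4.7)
The proof parallels that of Lemma~\ref{disgiunti}, with two added complications: the infinite union over $k, k' \geq 0$, and the possibility that two $k$-distant traps share the ``small'' conductance. The broad strategy is a union bound combined with Borel--Cantelli, but since the resulting estimate on $\bbP(\cD_n)$ will only decay as $\e^{-3 q_n}$, the Borel--Cantelli step must be applied along a sparse subsequence and then extended by a monotonicity argument.

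After the union bound, the task reduces to controlling $\bbP(\cW_x^{(k)} \cap \cW_y^{(k')})$ for each quadruple $-n \leq x < y \leq n$, $|x-y| \leq n \e^{-5 q_n}$, $k, k' \geq 0$. Since $\cW_x^{(k)}$ is $\sigma(c_{x-1}, c_{x+k})$-measurable, I classify pairs according to how the four positions $\{x-1, x+k, y-1, y+k'\}$ overlap. A direct check shows three possibilities only: all four positions are distinct (Case~A); $x+k = y+k'$, i.e.\ the two traps share $c_{x+k}$ (Case~B); or $x+k = y-1$, which forces $c_{x+k}$ to be both $> \e^{q_n^2}$ and $< \e^{-q_n^2}$ and hence contributes zero probability.

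In Case~A, independence gives
\[
\bbP(\cW_x^{(k)} \cap \cW_y^{(k')}) \leq \bbP(\rho_x^{(k)} > d_n \e^{-q_n}) \, \bbP(\rho_y^{(k')} > d_n \e^{-q_n}).
\]
Using \eqref{asymprhok}, Potter's bound for $\psi$, and the defining relation $\psi(d_n) d_n^{-\ga} \sim 1/n$, each factor is at most $c\, \e^{-\gl \ga k/2 + (\ga+\gd) q_n}/n$ for any $\gd > 0$; fixing $\gd$ small (so $\ga + \gd < 1$), the summed Case~A contribution is bounded by $2 n^2 \e^{-5 q_n} \cdot (C \e^{q_n}/n)^2 \leq C' \e^{-3 q_n}$. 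In Case~B I abandon the tail estimate on $\rho_y^{(k')}$ in favour of the stricter inclusion $\cW_x^{(k)} \cap \cW_y^{(k')} \subset \cW_x^{(k)} \cap \{c_{y-1} > \e^{q_n^2}\}$ (these being independent since $y-1 \notin \{x-1, x+k\}$), yielding
\[
\bbP(\cW_x^{(k)} \cap \cW_y^{(k')}) \leq c\, \e^{-\gl \ga k/2 + q_n}/n \cdot L_\infty(\e^{q_n^2}) \e^{-\ga_\infty q_n^2}.
\]
Summing geometrically over $k \geq y-x$, then over the gap $y-x \geq 1$, then over $x$, produces a total of order $\e^{q_n - \ga_\infty q_n^2/2}$, which is $o(\e^{-3 q_n})$. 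Hence $\bbP(\cD_n) \leq C \e^{-3 q_n}$.

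Since $\sum_n \e^{-3 q_n} = \sum_n \e^{-3(\log n)^{1/4}}$ diverges, direct Borel--Cantelli fails; this is the main subtlety. I therefore restrict to $n_j := 2^j$, for which $\sum_j \e^{-3 q_{n_j}} = \sum_j \e^{-3 (j \log 2)^{1/4}} < \infty$, so $\cD_{n_j}$ occurs for only finitely many $j$ almost surely. To cover $n \in [n_j, n_{j+1}]$ I introduce an enlarged event $\widetilde \cD_j$ defined as $\cD_{n_{j+1}}$ but with the \emph{weaker} thresholds $d_{n_j} \e^{-q_{n_j}}$ and $\e^{q_{n_j}^2}$ and with the pair separation enlarged to $n_{j+1} \e^{-5 q_{n_j}}$. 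The monotonicity (eventually) of $n \mapsto d_n \e^{-q_n}$ and of $n \mapsto \e^{q_n^2}$, together with $[-n, n] \subset [-n_{j+1}, n_{j+1}]$ and $n \e^{-5 q_n} \leq n_{j+1} \e^{-5 q_{n_j}}$, yields $\cD_n \subset \widetilde \cD_j$ for every $n \in [n_j, n_{j+1}]$, while the same Case~A/B analysis gives $\bbP(\widetilde \cD_j) \leq C'' \e^{-3 q_{n_j}}$. A second Borel--Cantelli then concludes. The principal obstacle is precisely the non-summability of $\bbP(\cD_n)$, which forces the subsequence-plus-monotonicity construction; the secondary technicality is Case~B, where the loss of independence must be compensated by trading the depth condition for the much sharper tail $\bbP(c_{y-1} > \e^{q_n^2})$.
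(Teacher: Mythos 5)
Your proof is correct, and it makes one genuine simplification relative to the paper's argument. The key departure is in the case $y-1 = x+k$ (equivalently $y = x+k+1$): you observe that the definition of $\cW_x^{(k)}$ forces $c_{x+k} < \e^{-q_n^2}$ while $\cW_y^{(k')}$ forces $c_{y-1} = c_{x+k} > \e^{q_n^2}$, so the intersection is empty and contributes zero. The paper instead handles this configuration by noting $\rho_x^{(k)}\rho_y^{(k')} = \rho_x^{(k+k'+1)}$ and estimating $\bbP(\rho_x^{(k+k'+1)} > d_n^2 \e^{-2q_n}) \lesssim n^{-3/2}$. The reason the paper takes the longer route is that its proof is written to establish Lemma~\ref{disgiunti} (simple traps, where $\cW_x$ carries no conductance constraints) and Lemma~\ref{disgiunti2} simultaneously; your shortcut exploits the extra structure of well-and-wall traps and would not cover the simple-trap case. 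Your case~B (the shared endpoint $x+k = y+k'$, which forces $k>0$) matches the paper's third bullet, with essentially the same independence argument trading the depth condition on $\rho_y^{(k')}$ for the sharp tail of $c_{y-1}$. The Borel--Cantelli step also differs in flavour: you take $n_j = 2^j$ and exploit that $q_n$ varies negligibly on dyadic scales, whereas the paper takes the much sparser $n_\ell = \exp((2\log\ell)^4)$ so that $q_{n_\ell} = 2\log\ell$ and the bound $\e^{-cq_{n_\ell}}$ becomes polynomially small in $\ell$; both yield summability, and both need the same monotonicity device (your $\widetilde\cD_j$, the paper's $\tilde\cD_\ell$ with a slack constant $\tilde c$). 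The exponent bookkeeping in your Case~A is slightly sharper than the paper's ($\e^{-3q_n}$ versus $\e^{-q_n}$), but this is immaterial since neither is summable over $n$.
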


\noindent The proof of Lemma \ref{disgiunti2} can be found in Section \ref{proofdisgiunti} of the Appendix.
	As a consequence of Lemma \ref{disgiunti2}, we have that $|\cJ_n| \leq \e^{5 q_n}$ also in the case of $k$-distant traps.

\begin{lemma}\label{lem:maxrho2}
	$\bbP$-a.s., there is some $n_0$ such that for all $n\geq n_0$
	\[ 
	M_n:=\sup_{ -n \leq x \leq n} \sup_{k\geq 0}\rho_x^{(k)} 
	\leq  d_n \e^{ q_n} \, .
	\]
\end{lemma}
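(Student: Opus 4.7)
The approach mimics the proof of Lemma~\ref{lem:maxrho} for simple traps, with the only new ingredient being an additional union bound over $k\geq 0$; this extra sum is absorbed by the exponential decay in $k$ built into \eqref{asymprhok}. Summing the geometric series over $k$ and union-bounding over $-n\leq x\leq n$ yields
\[
\bbP\bigl(M_n > d_n \e^{q_n}\bigr) \;\leq\; C\, n\, \psi\bigl(d_n \e^{q_n}\bigr)\,\bigl(d_n \e^{q_n}\bigr)^{-\ga}.
\]

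Next, I would apply Potter's bound \cite[Thm.~1.5.6]{BGT89} to the slowly varying $\psi$, which gives $\psi(d_n \e^{q_n})/\psi(d_n) \leq C' \e^{\ga q_n/4}$ for $n$ large. Combining this with the defining relation $n\,\psi(d_n)\,d_n^{-\ga} \to 1$ from \eqref{def:dn}, I obtain $\bbP(M_n > d_n \e^{q_n}) \leq C''\,\e^{-3\ga q_n/4}$. This bound is not directly summable in $n$, so I pass to the dyadic subsequence $n_j := 2^j$ and use the stronger bound
\[
\bbP\bigl(M_{n_{j+1}} > d_{n_j}\, \e^{q_{n_j}}\bigr) \;\leq\; C'''\,\e^{-3\ga q_{n_j}/4},
\]
where the factor $n_{j+1}= 2n_j$ is absorbed by $\psi(d_{n_j})d_{n_j}^{-\ga}\sim 2^{-j}$. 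Since $q_{n_j} = (j\log 2)^{1/4}$, the right-hand side decays faster than any polynomial in $1/j$, so Borel--Cantelli yields $M_{n_{j+1}} \leq d_{n_j}\, \e^{q_{n_j}}$ $\bbP$-a.s.\ for all $j$ large enough. The monotonicity of $n\mapsto M_n$ and of $n \mapsto d_n \e^{q_n}$ then delivers, for any $n\in[n_j, n_{j+1}]$, the bound $M_n \leq M_{n_{j+1}} \leq d_{n_j}\e^{q_{n_j}}\leq d_n \e^{q_n}$, which is the claim.

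There is no substantial obstacle in the argument. The only delicate point is to phrase Borel--Cantelli at the correct dyadic ``shift'' --- bounding $M_{n_{j+1}}$ by $d_{n_j}\e^{q_{n_j}}$ rather than by $d_{n_{j+1}}\e^{q_{n_{j+1}}}$ --- so that the monotonicity step goes through for every $n$ in the geometric window $[n_j, n_{j+1}]$. The choice $q_n = (\log n)^{1/4}$ is calibrated precisely for this: $\e^{q_n}$ dominates Potter's slowly-varying correction on $\psi$ with room to spare, while $q_{2^j}\sim j^{1/4}$ still tends to infinity fast enough to guarantee Borel--Cantelli summability along the geometric subsequence.
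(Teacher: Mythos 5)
Your argument is correct and follows the same strategy as the paper: a union bound over $x$ and $k$ using \eqref{asymprhok}, Potter's bound on $\psi$, Borel--Cantelli along a subsequence, then monotonicity to fill the gaps. The only notable difference is the subsequence: you use $n_j = 2^j$ together with a ``shifted'' Borel--Cantelli estimate on $M_{n_{j+1}}$ at the threshold $d_{n_j}\e^{q_{n_j}}$, so that the final monotonicity step needs no slack, whereas the paper takes the much sparser $n_\ell = \exp\{(\tfrac{8}{\ga}\log\ell)^4\}$ (chosen so the probability bound is exactly $\ell^{-2}$) and exploits $n_{\ell+1}/n_\ell \to 1$ to absorb a constant factor. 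Both are valid --- in your version summability holds because $\sum_j \e^{-c j^{1/4}} < \infty$, as you correctly note --- and your bookkeeping is arguably cleaner.
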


\noindent The proof of Lemma \ref{lem:maxrho2} can be found in Section \ref{prooflem:maxrho} of the Appendix.

\begin{lemma}\label{sogood2}
	Let $\cG_n$ be the event
	\[
	\cG_n := \bigcup_{ - n\leq x\leq n} \bigcup_{k\geq 0}  \big\{ \rho_x^{(k)} > d_n\, \e^{-q_n} \big\} \cap  \Big( \bigcup_{ \substack{ y = x- 2C_n \\ y\neq x-1,x+k } }^{x+2 C_n}  \{ c_y \notin [\e^{-4 q_n},\e^{4 q_n}] \} \Big) \, .
	\]
	Then $\bbP$-a.s.\ $\cG_n$ occurs only for finitely many $n$.
\end{lemma}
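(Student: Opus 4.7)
The plan is to follow the same strategy as Lemma~\ref{sogood}: an independence-based union bound combined with Borel--Cantelli applied along a sparse subsequence. The key observation is that $\rho_x^{(k)}=\e^{-\lambda(k+1)}c_{x-1}/c_{x+k}$ depends only on $(c_{x-1},c_{x+k})$, whereas for any $y\in\{x-2C_n,\ldots,x+2C_n\}\setminus\{x-1,x+k\}$ the event $\{c_y\notin[\e^{-4q_n},\e^{4q_n}]\}$ depends only on $c_y$; these two events are therefore independent. A union bound yields
\[
\bbP(\cG_n)\leq\sum_{x=-n}^{n}\sum_{k\geq 0}\sum_{\substack{y=x-2C_n\\ y\neq x-1,\,x+k}}^{x+2C_n}\bbP\bigl(\rho_x^{(k)}>d_n\e^{-q_n}\bigr)\,\bbP\bigl(c_y\notin[\e^{-4q_n},\e^{4q_n}]\bigr).
\]

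Next, the tail estimate~\eqref{asymprhok} together with Potter's bound and the definition~\eqref{def:dn} of $d_n$ gives $\bbP(\rho_x^{(k)}>d_n\e^{-q_n})\leq c\,\e^{-\lambda\ga k/2}\e^{\ga q_n(1+o(1))}/n$, so that the sum over $k\geq 0$ is finite by the geometric factor. Assumption~\ref{mainassumption}, together with $\ga=\min(\ga_0,\ga_\infty)$ and the fact that slowly varying functions are sub-exponential in $q_n$, gives $\bbP(c_y\notin[\e^{-4q_n},\e^{4q_n}])\leq c\,\e^{-4\ga q_n(1+o(1))}$. Collecting the $2n+1$ values of $x$ and the $O(\log n)$ values of $y$, one obtains
\[
\bbP(\cG_n)\leq C\log n\cdot\e^{-3\ga q_n(1+o(1))}.
\]

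Since $q_n=(\log n)^{1/4}$, this bound is stretched-exponentially small but \emph{not} summable in $n$, so a direct Borel--Cantelli fails. The remedy is to pass to the subsequence $n_j:=2^j$ and consider the monotonized event
\[
\tilde\cG_j:=\bigcup_{|x|\leq n_{j+1}}\bigcup_{k\geq 0}\bigl\{\rho_x^{(k)}>d_{n_j}\e^{-q_{n_{j+1}}}\bigr\}\cap\!\!\bigcup_{\substack{|y-x|\leq 2C_{n_{j+1}}\\ y\neq x-1,\,x+k}}\{c_y\notin[\e^{-4q_{n_j}},\e^{4q_{n_j}}]\}.
\]
Monotonicity of $n\mapsto d_n$, $n\mapsto q_n$, $n\mapsto C_n$ and of the ranges of $x$ and $y$ ensures $\cG_n\subset\tilde\cG_j$ for every $n\in[n_j,n_{j+1}]$. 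Running the same estimate on $\tilde\cG_j$ and using that $q_{n_{j+1}}/q_{n_j}\to 1$ under the choice $n_j=2^j$, we get $\bbP(\tilde\cG_j)\leq C\log n_{j+1}\cdot\e^{-3\ga q_{n_j}(1+o(1))}\leq C j\,\e^{-3\ga(j\log 2)^{1/4}(1+o(1))}$, which is summable in~$j$. Borel--Cantelli then gives that $\tilde\cG_j$, and hence $\cG_n$ for all $n\in[n_j,n_{j+1}]$, occurs only finitely often a.s.

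The main obstacle is precisely the non-summability of the naive union bound: each factor decays only stretched-exponentially in $\log n$, so some monotonization is indispensable. Once the containment $\cG_n\subset\tilde\cG_j$ is in place and one checks that pushing $q_n$ up to $q_{n_{j+1}}$ in the trap-probability factor costs only a $(1+o(1))$ in the exponent (while the favourable $\e^{-4\ga q_{n_j}}$ factor is preserved), the argument becomes routine.
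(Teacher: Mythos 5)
Your proof is correct and follows essentially the same strategy as the paper's: an independence-based union bound (using that $\rho_x^{(k)}$ depends only on $c_{x-1},c_{x+k}$), Potter's bound for the tail estimates, monotonization over a sparse subsequence, and Borel--Cantelli. The only inessential difference is the choice of subsequence ($n_j=2^j$ versus the paper's $n_\ell=\exp((\tfrac{4}{\ga}\log\ell)^4)$, which makes $\bbP(\tilde\cG_{n_\ell})\leq c/\ell^2$ manifestly summable); both yield a summable bound.
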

\noindent The proof of Lemma \ref{sogood2} can be found in Section \ref{proofsogood} of the Appendix.

\begin{lemma}\label{k-is-small}
	Let $\cK_n$ be the event that there exists a $k$-distant trap with $k\ge \frac{6}{\lambda} q_n$, \textit{i.e.}\
	\[
	\cK_n := \bigcup_{-n\leq x\leq n} \bigcup_{k\ge \frac{6}{\lambda} q_n } \cW_{x}^{(k)} \, .
	\]
	Then $\bbP$-a.s., $\cK_n$ occurs for only finitely many $n$.
\end{lemma}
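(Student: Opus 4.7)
The plan is a union bound over all positions $x\in\{-n,\ldots,n\}$ and all distances $k\ge \tfrac{6}{\lambda}q_n$; I will discard the individual conductance conditions in the definition of $\cW_x^{(k)}$ and keep only the constraint $\rho_x^{(k)}>d_n\e^{-q_n}$. The key observation is that the factor $\e^{-\lambda\ga k/2}$ appearing in the tail bound \eqref{asymprhok} on $\rho_x^{(k)}$ produces a sufficient gain when summed over $k\ge \tfrac{6}{\lambda}q_n$ to overcome the $\e^{\ga q_n}$ loss that comes from shifting the threshold from $d_n$ to $d_n\e^{-q_n}$.

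More precisely, by \eqref{asymprhok} and Potter's bound applied to the slowly varying $\psi$, together with the defining relation $\psi(d_n)d_n^{-\ga}\sim 1/n$, one has, for any $\delta>0$ and $n$ large,
\[
\bbP\bigl(\rho_x^{(k)}>d_n\e^{-q_n}\bigr)\le c\,\e^{-\lambda\ga k/2}\psi(d_n\e^{-q_n})(d_n\e^{-q_n})^{-\ga}\le \frac{C\,\e^{-\lambda\ga k/2}\e^{(\ga+\delta)q_n}}{n}.
\]
Summing the geometric series $\sum_{k\ge 6q_n/\lambda}\e^{-\lambda\ga k/2}\le C'\e^{-3\ga q_n}$ and over $(2n+1)$ values of $x$ yields $\bbP(\cK_n)\le C''\e^{-(2\ga-\delta)q_n}$.

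This bound decays only as a stretched exponential in $n$, so the direct Borel--Cantelli does not apply. However, along the subsequence $n_j:=2^j$ we have $q_{n_j}=(j\log 2)^{1/4}$, so $\sum_j \e^{-c q_{n_j}}=\sum_j \e^{-c' j^{1/4}}<\infty$ and Borel--Cantelli gives that $\bbP$-a.s.\ $\cK_{n_j}$ holds for only finitely many $j$. To pass from the subsequence to all $n$, I use a sandwich argument: for $n\in[n_j,n_{j+1}]$, the monotonicity of $d_n\e^{-q_n}$, $\e^{q_n^2}$, and $\tfrac{6}{\lambda}q_n$ shows that $\cK_n$ is contained in an event $\widetilde{\cK}_j$ of the same form with $x$ ranging in $\{-n_{j+1},\ldots,n_{j+1}\}$, threshold $d_{n_j}\e^{-q_{n_{j+1}}}$, and distance condition $k\ge \tfrac{6}{\lambda}q_{n_j}$. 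Since $q_{n_{j+1}}/q_{n_j}\to 1$ and $d_{n_{j+1}}/d_{n_j}$ is bounded (as $d_n$ is regularly varying), repeating the three estimates above with these modified thresholds gives $\bbP(\widetilde{\cK}_j)\le C\e^{-(2\ga-2\delta)q_{n_j}}$, which is again summable in $j$; Borel--Cantelli on $\widetilde{\cK}_j$ then finishes the proof.

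The main obstacle is precisely this stretched-exponential (rather than polynomial) decay of $\bbP(\cK_n)$, which forces the subsequence-plus-sandwich device rather than a direct Borel--Cantelli argument. One might hope to sharpen the bound by also exploiting the conductance conditions $c_{x-1},1/c_{x+k}>\e^{q_n^2}$ built into $\cW_x^{(k)}$, but the would-be gain of $\e^{-2\ga q_n^2}$ is more than wiped out by the factor $n$ from the union bound over $x$, since $\log n\gg q_n^2=\sqrt{\log n}$; these conditions are therefore simply discarded.
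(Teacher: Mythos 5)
Your proof is correct and follows essentially the same strategy as the paper's: you discard the conductance conditions in $\cW_x^{(k)}$, apply the tail bound \eqref{asymprhok} to $\rho_x^{(k)}$, use the definition \eqref{def:dn} of $d_n$ to cancel the factor $n$ from the union over $x$, exploit the geometric decay in $k$ over the range $k\ge\tfrac{6}{\lambda}q_n$ to beat the $\e^{O(q_n)}$ loss from Potter's bound, and then upgrade the resulting $\bbP(\cK_n)\to 0$ to an almost-sure statement via a subsequence-plus-sandwich Borel–Cantelli argument (the paper defers this last step to the monotonicity device already spelled out in the proof of Lemma~\ref{disgiunti}). The only cosmetic differences are that you apply Potter with an arbitrarily small exponent $\delta$ on $\psi$ (giving the slightly sharper $\e^{-(2\ga-\delta)q_n}$ versus the paper's $\e^{-\ga q_n}$), and you take the dyadic subsequence $n_j=2^j$ whereas the paper chooses $n_\ell$ so that $n_{\ell+1}/n_\ell\to1$; both choices work because in either case the ratio $n_{j+1}/n_j$ stays bounded and $q_{n_{j+1}}/q_{n_j}\to1$, which is all the sandwich estimate requires.
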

\noindent The proof of Lemma \ref{k-is-small} can be found in Section \ref{proofk-is-small} of the Appendix.

The last lemma shows that, with high probability, there are no ways to have $\rho_x^{(k)}$ close to $d_n$ without having  both $c_{x-1}, \frac{1}{c_{x+k}} > \e^{q_n^2}$: it tells that the traps of depth of order $d_n$ can only be of well-and-wall type.

\begin{lemma}
	\label{annoying2}
Let $\gep_n =q_n^{-\gd}$ for some $\gd>0$, and let
	\[
	\cH_n 
	:= \bigcup_{-n\leq x \leq n}  \bigcup_{k\ge 0}   \{\rho_{x}^{(k)} > \gep_n  \e^{-\lambda k/2} d_n \}\cap (\cW_x^{(k)})^{c}\,.
	\]
	Then, if $\gd$ is sufficiently small, we have that $\bbP(\cH_n)  \to 0$ as $n\to+\infty$.
\end{lemma}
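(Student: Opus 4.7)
The plan is to bound $\bbP(\cH_n)$ via a union bound and a case analysis, starting from
\[
\bbP(\cH_n) \leq \sum_{x=-n}^{n} \sum_{k=0}^{C_n} \bbP\big(\{\rho_x^{(k)} > \gep_n \e^{-\lambda k/2} d_n\} \cap (\cW_x^{(k)})^c\big),
\]
and writing $(\cW_x^{(k)})^c$ as the union of $A = \{\rho_x^{(k)} \leq d_n \e^{-q_n}\}$, $B = \{c_{x-1} \leq \e^{q_n^2}\}$, and $C = \{1/c_{x+k} \leq \e^{q_n^2}\}$. A preliminary observation is that, under Assumption~\ref{assumptionB}, the divergence of $\bbE[c_0^\ga]$ and $\bbE[1/c_0^\ga]$ forces $\ga_0 = \ga_\infty = \ga$; combined with the exclusion $\gga_0, \gga_\infty \neq -1$ in Assumption~\ref{mainassumption}, this yields $\gga_0, \gga_\infty > -1$, which is used repeatedly. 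I would also exploit the identity $\bbP(\rho_x^{(k)} > T) = \bbP(\rho_0 > T\e^{\lambda k})$ (since $c_{x-1}/c_{x+k}$ has the same law as $c_{-1}/c_0$), together with the sharp tail $\bbP(\rho_0 > t) \sim \psi(t) t^{-\ga}$ from Proposition~\ref{prop:rho0}; this sharpens the crude bound \eqref{asymprhok} where needed.

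For case $B$ (case $C$ is symmetric), setting $s := \gep_n \e^{\lambda(k/2+1)} d_n$, conditioning on $c_{x-1}$ and integrating gives
\[
\bbP(c_{-1}/c_{k} > s,\, c_{-1} \leq \e^{q_n^2}) \sim \ga\, s^{-\ga} L_0(s) \int_0^{q_n^2} \gp_\infty(u)\, u^{\gga_\infty}\,\dd u,
\]
using $L_0(s\e^{-u}) \sim L_0(s)$ for $u \leq q_n^2 \ll \log s \sim \log d_n$. Since $\gga_\infty > -1$, the integral is $\sim c(q_n^2)^{1+\gga_\infty} \gp_\infty(q_n^2)$. Together with $\bbP(\rho_x^{(k)} > \gep_n \e^{-\lambda k/2} d_n) \leq c\,\gep_n^{-\ga}\, \e^{-\ga\lambda k/2}\, \psi(d_n)\, d_n^{-\ga}$ (from \eqref{asymprhok}) and $\psi(d_n) d_n^{-\ga} \sim 1/n$, summing geometrically over $k$ and then over $x \in [-n, n]$ bounds the case $B$ contribution by
\[
\gep_n^{-\ga}\left(\tfrac{q_n^2}{\log d_n}\right)^{1+\gga_\infty}\tfrac{\gp_\infty(q_n^2)}{\gp_\infty(\log d_n)} \sim (\log n)^{\gd\ga/4\,-\,(1+\gga_\infty)/2\,+\,o(1)},
\]
which vanishes for $\gd < 2(1+\gga_\infty)/\ga$; symmetrically, $\gd < 2(1+\gga_0)/\ga$ handles case $C$.

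For case $A$, the ring is empty unless $k \geq k_0 := \lceil(2/\lambda)(q_n - \gd\log q_n)\rceil$. Writing $k = k_0 + r$ with $r \geq 1$, the ring probability factors (via the tail asymptotic for $c_{-1}/c_k$) as
\[
\bbP\big(\rho_x^{(k)} \in (s_{low}(k), d_n\e^{-q_n}]\big) \sim (1-\e^{-\ga\lambda r/2})\, \bbP\big(\rho_x^{(k)} > s_{low}(k)\big),
\]
and the sharp asymptotic gives
\[
\bbP(\rho_x^{(k)} > s_{low}(k)) = \bbP(\rho_0 > \gep_n \e^{\lambda k/2} d_n) \sim c\, \gep_n^{-\ga}\, \e^{-\ga\lambda k/2}/n \;\sim\; c\, q_n^{2\gd\ga}\, \e^{-\ga q_n}\, \e^{-\ga\lambda r/2}/n,
\]
the crucial $\e^{-\ga q_n}$ emerging from $\e^{-\ga\lambda k_0/2} = q_n^{\gd\ga}\e^{-\ga q_n}$. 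Summing $\sum_{r\geq 1}\e^{-\ga\lambda r/2}(1-\e^{-\ga\lambda r/2}) < \infty$ and then over $x$ yields a total of order $q_n^{2\gd\ga}\e^{-\ga q_n}$, which decays superpolynomially.

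The hard part will be case $A$: the crude bound \eqref{asymprhok} loses the critical $\e^{-\ga q_n}$ factor (the $\e^{\ga\lambda k/2}$ arising from $s_{low}(k)^{-\ga}$ exactly cancels the $\e^{-\ga\lambda k/2}$ prefactor), so the argument must invoke the sharper identity $\bbP(\rho^{(k)} > T) = \bbP(\rho_0 > T\e^{\lambda k})$ combined with Proposition~\ref{prop:rho0} to extract the decay. For cases $B$ and $C$, the delicate point is obtaining a polynomial-in-$\log n$ decay rate for the ratio of the restricted tail to the full tail, which hinges crucially on $\gga_0, \gga_\infty > -1$---a property unavailable in general under Assumption~\ref{assumptionA}, but guaranteed here by the combination of Assumption~\ref{assumptionB} with the exclusions built into Assumption~\ref{mainassumption}.
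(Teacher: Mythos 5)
Your proof follows essentially the same route as the paper's: a union bound over $(x,k)$, decomposition of $(\cW_x^{(k)})^c$ into the three failing conditions, and for the bounded-conductance cases a conditioning argument plus Karamata's theorem, exploiting $\gamma_0,\gamma_\infty>-1$ (which you correctly deduce from Assumption~\ref{assumptionB} together with the exclusion $\gamma_0,\gamma_\infty\neq -1$). Your case-B computation is the symmetric twin of the paper's case-C computation, and your threshold on $\gd$ agrees with theirs up to the precision of Potter's bound. You additionally make explicit the contribution of the case $\rho_x^{(k)}\leq d_n\e^{-q_n}$ (your case A), which the paper's written proof leaves out (it only states the estimate for ``$c_{x-1}<\e^{q_n^2}$ or $1/c_{x+k}<\e^{q_n^2}$''): this case is indeed nonvacuous for $k\geq(2/\lambda)(q_n-\gd\log q_n)$ and must be handled, and your observation that the crude bound \eqref{asymprhok} loses the geometric-in-$k$ decay — so that the identity $\bbP(\rho_x^{(k)}>T)=\bbP(\rho_0>T\e^{\lambda k})$ combined with the sharp tail of $\rho_0$ is what yields the superpolynomial factor $\e^{-\ga q_n}$ — is correct and worth recording. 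One minor imprecision in your case B: the claim $L_0(s\e^{-u})\sim L_0(s)$ uniformly for $u\leq q_n^2$ implicitly uses $u=\log c_{-1}\geq 0$; when $c_{-1}<1$ the argument $\log(s/c_{-1})$ can exceed $\log s$ by an arbitrarily large amount. The paper avoids this by treating the analogous subcase ($1/c_{x+k}<1$) separately, bounding directly by $\bbP(c_{x-1}>s)$; the same split repairs your estimate, and the $c_{-1}<1$ contribution is in fact of smaller order, so the conclusion stands.
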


\noindent The proof of Lemma \ref{annoying2} can be found in Section \ref{proofannoying2} of the Appendix.

\subsection{Reduction to large traps}

Also under Assumption \ref{assumptionB} the result of Proposition \ref{blocchetti} is valid:
 with $\gep_n = q_n^{-\gd}$ defined in Lemma~\ref{annoying2}, we have that
\begin{align}
\label{reductiontolargetraps2}
P^\go\Big( \frac{1}{d_n} \big|T_n-\sum_{j\in \cJ_n} T(\cB_j)\big|  > \gep_n^{(1-\ga)/2} \Big) \leq  \gep_n^{(1-\ga)/2}\, ,
\end{align}
with $\bbP$-probability going to $1$.
The proof is identical to the one of Proposition \ref{blocchetti}, and relies on Lemma~\ref{annoying2} in place of Lemma~\ref{annoying}, in order to say that with high probability, all $\rho_z^{(k)}$ outside trapping blocks are smaller than $\gep_n \e^{-\lambda k/2} d_n$ (so that one gets \eqref{eq:expectT}).

\subsection{Crossing of large traps}

As in the case of simple traps, Lemma \ref{disgiunti2} guarantees that to each trapping block $\cB$ can be associated a unique site $x_{\cB}$ and a unique $k_{\cB} \in \{0,\ldots,  \tfrac{6}{\lambda}  q_n \}$ such that $x_{\cB}$ is a $k_{\cB}$-distant $n$-trap (Lemma~\ref{k-is-small} ensures that $k_{\cB} < \tfrac{6}{\lambda} q_n$).   As before we denote by $\rho_{\cB}:= \rho_{x_{\cB}}^{(k_{\cB})}$ the depth of the trap associated with~$x_\cB$. We approximate again $T(\cB)/\rho_{\cB}$ by  $\tau_\cB$, which in the case of well-and-wall traps becomes extremely simple: 
\begin{equation}
\label{def:taubk}
\tau_{\cB} 
= 2\, \mathbf{e}_{\cB} \, ,
\end{equation}
where $\mathbf{e}_\cB \sim \mathrm{Exp}(1)$.
Analogously to the case of simple traps define
\begin{itemize}
	\item[$p_{\cB}$]$:= P_{x_{\cB}+k_\cB+1}^\go(T_{x_{\cB}} > T_{x_{\cB} +k_{\cB} +C_n})$ the probability of escaping to the right of $x_\cB+k_\cB+1\,$;
	\item[$\theta_\cB$] $:=E^{\bar \go}[\Theta_{x_{\cB} + k_{\cB}}]$, where $\Theta_{x_{\cB}+k_{\cB}}$ equal to $1$ plus the time that it takes for $X_j$ to go from $x_\cB+k_\cB-1$ to $x_\cB+k_\cB$.
\end{itemize}
Since we have $c_{x_{\cB}+k_\cB}^{-1}> \e^{q_n^2}$, we get that $p_{\cB}\to 1$ as $n\to +\infty$. On the other hand, also $c_{x_{\cB}-1}>\e^{q_n^2}$, so that the main contribution to $\theta_\cB$  comes from the time spent on the edge $(x_{\cB}-1, x_{\cB})$, which is approximately $2$ if $k_{\cB}=0$ and  $2 c_{x_{\cB}-1}$ if $k_{\cB}>0$.
In \eqref{semplificare}, this roughly corresponds to having $p_\cB =1$ and $\theta_{\cB}=2$, and this gives a heuristic reason why \eqref{def:taubk} holds.

In analogy with Proposition \ref{trappolone}, we claim that there is a constant $c>0$ such that for any $\gd \in(0,1)$,
\begin{align}\label{paolo2}
\sup_{\go\in \bar \gG_n} 
P^\go\Big(\Big|\frac{T(\cB)}{\rho_\cB}-\tau_{\cB}\Big|>\gd  , A_n\Big)
	\leq c \e^{-q_n^2/2}+c \gd^{-2}\e^{-q_n^2/2}\e^{2 \gl k_{\cB}} \leq \gd^{-2} \e^{- q_n^{2}/4}  \, ,
\end{align}
the last inequality holding $\bbP$-a.s., for $n$ large enough, since $\lambda k_{\cB} \leq 6 q_n$ thanks to Lemma~\ref{k-is-small}.
We follow the three steps of the proof of  Proposition \ref{trappolone}. We use the same notations as in Section \ref{sectioncrossinglarge}, re-adapted to the case of $k$-traps. We call  $x=x_\cB$ and $k=k_\cB$. 

\smallskip

\noindent{\bf Step 1.}  We let $T^{(1)}:=T_{x+k+1}-T_{x+k}$ and $t_1=T_{x+k+1}$. The random variable $G:=1+\#\{\ell \, :\,X_{\ell -1}=x+k+1,\,X_\ell=x+k\}$ is a geometric r.v.\ of parameter $p_\cB$, representing the number of attempts to run away from $x+k+1$ and never come back into $x+k$.  When $G\geq 2$, we iteratively define, for $2\leq i\leq G$,
\begin{align*}
t_i
:=\inf\{\ell >t_{i-1}:\,X_{\ell-1}=x+k,\,X_\ell=x+k+1\}\quad\mbox{ and }\quad
T^{(i)}
:=t_i-t_{i-1}\, .
\end{align*}
As in \eqref{decomposino} we rewrite $T(\cB)$ as
$
T(\cB)=T_{x+k} +\sum_{i=1}^{G}T^{(i)} +\bar T(x+k+1,2C_n)\,,
$
where $\bar T(x+k+1,2C_n)$ is the time it takes for $(X_\ell)_{\ell\in\N}$ to go from $x+k+1$ to $2C_n$ conditioned on never returning to $x+k$. By the triangular inequality 
\begin{align}\label{ddd2}
P^{\bar\go}\Big(\Big|\frac{T(\cB)}{\rho_\cB}-\tau_{\cB}\Big|>\gd \Big)
\leq D_1+D_2+D_3\,,
\end{align}
with $D_1$, $D_2$ and $D_3$ that are the obvious homologous of the quantities appearing in \eqref{ddd}.

\smallskip

\noindent{\bf Step 2.} For $D_1$ we use Markov inequality. We notice that in the case $k=0$ we have $E^{\bar\go}[T_{x+k}]\leq c' C_n^2 \e^{8q_n}$, while for $k>0$ we get $E^{\bar\go}[T_{x+k}]\leq c' c_{x-1} C_n  \e^{8q_n}$ (cf.~\eqref{natale}, together with Definition~\ref{def:goodtrapping2}). Hence
\begin{align*}
D_1
\leq 
\begin{cases}
c\, \gd^{-1} \e^{11q_n}\rho_x^{-1} \quad&\mbox{if }k=0 \, ;\\
c\, \gd^{-1} \e^{11q_n} (c_{x-1})^{-1}\e^{\gl k}\leq  c\delta^{-1}\e^{-q_n^2/2} e^{\lambda k} \quad &\mbox{if }k>0 \, .
\end{cases}
\end{align*}

For the term $D_3$, we use the same idea as \eqref{termd3}. For well-and-wall traps we even have, as can be seen from \eqref{pibi2} below, $p_\cB>c>0$, so
\begin{align*}
D_3 
&\leq c' \gd^{-1} \rho_\cB^{-1} E^{\bar\go}[ \bar T_{x+k+1,2C_n}] 
\leq c''\, \gd^{-1} \rho_\cB^{-1} C_n \e^{8q_n}\, .
\end{align*}

\smallskip

\noindent{\bf Step 3.}
It remains to deal with the term $D_2$. Following the reasoning after \eqref{pibi} and remembering that $A_n$ happens a.s.~for $n$ large enough, we see that for well-and-wall traps, using \eqref{formulap}, since $c_{x+k} < \e^{-q_n^{2}}$ and all other conductances are in $[\e^{-4q_n}, \e^{4q_n}]$,
\begin{align}\label{pibi2}
p_\cB
=   P_{x+k+1}^\go(T_{x+k+C_n}<T_{x+k})
\geq \Big(1+c' \e^{-q_n^2/2}\Big)^{-1}\,.
\end{align}
In light of \eqref{pibi2}, it is sufficient to replace the bound  \eqref{denver} by 
\begin{align}
D_2
\leq P^{\bar\go}\Big(\Big|\frac{ T^{(1)}}{\rho_\cB} -2\ \mathbf{e}_{\cB}\Big|>\frac{\gd}{3}\Big)+P^{\bar\go}(G>1)\,.
\label{newdenver}
\end{align}
Note that the second term is equal to $1-p_\cB\leq c' \e^{-q_n^2/2}$.

In order to control the first term in \eqref{newdenver}, we use decomposition \eqref{coniglio} with $\gU_1=0$,   $\cG_1$  a geometric of parameter $1/(1+\rho_{x+k})$ (with $\mathbf{e}_{\cB}$ coupled with $\cG_1$, $\cG_1 =\lceil \mathbf{e}_{\cB} \log \frac{\rho_{x+k}}{1+\rho_{x+k}} \rceil$) and $\{\Theta_{x+k}(j)\}_{j\in\N}$ a collection of i.i.d.~copies of $\Theta_{x+k}$. 
We end up with
\begin{align}\label{ragnarock2}
P^{\bar\go}\Big(\Big|\frac{ T^{(1)}}{\rho_\cB} - 2 \mathbf{e}_{\cB}\Big|>\frac{\gd}{3}\Big)
=  P^{\bar\go}\Big(\Big|\frac{ 1}{\rho_\cB} \sum_{j=1}^{\cG_1-1}\Theta_{x+k}(j)- 2\, \mathbf{e}_{\cB}\Big|>\frac{\gd}{3}\Big)\,.
\end{align}
Then, the triangular inequality gives the following upper bound, analogously to \eqref{pony}, 
\begin{align}\label{pony2}
P^{\bar\go}\Big(\Big|\sum_{j=1}^{\cG_1-1}\Big(\Theta_{x+k}(j)\rho_\cB^{-1}-  2\rho_{x+k}^{-1} \Big)\Big|>\frac \delta{6}\Big) 
+ P^{\bar\go}\Big(2 \Big|\mathbf{e}_\cB-(\cG_1-1)\rho_{x+k}^{-1}\Big|>\frac \delta{6}\Big)\,.
\end{align}
For the second term in \eqref{pony2}, we use that $|\mathbf{e}_\cB - (\cG_1-1)/\rho_{x+k} | \leq (2+\mathbf{e}_\cB)/\rho_{x+k}$, so that the second term is bounded by 
$
P^{\bar\go}( \mathbf{e}_\cB> c' \delta \rho_{x+k} )\leq \exp\{- c' \gd \e^{q_n^2/2 } \}
$
for some constant $c>0$, since $\rho_{x+k}>\e^{q_n^2/2}$ (either $k=0$ and this is obvious, or $k>1$ and $c_{x+k-1} > \e^{-4q_n }$, $1/c_{x+k} >  \e^{q_n^2}$ by definition of a good  block).

We finally deal with the  first term in \eqref{pony2}. We first split it with the triangular inequality into (recall $\theta_{\cB} := E^{\bar \go}[\Theta_{x+k}]$)
\begin{align} \label{pony222}
P^{\bar\go}\Big(\Big|\sum_{j=1}^{\cG_1-1}\Big(\Theta_{x+k}(j)- \theta_{\cB}\Big)\Big|> \frac \delta{12}\rho_{\cB}\Big)
+ P^{\bar\go}\Big(\cG_1\Big|  \theta_{\cB} \rho_\cB^{-1}- 2 \rho_{x+k}^{-1}\Big|>\frac \delta{12}\Big)\,.
\end{align}
These two terms can be controlled thanks to the estimates on the mean and the variance of $\Theta_{x+k}$ given by the following lemma.
\begin{lemma}\label{meanvarlemma2}
	There exists a constant $c'>0$ such that, for $\go\in \bar \Gamma_n$:

	$\ast$  if $k= 0$,
		\begin{align}
	\theta_{\cB} := E^{\bar\go}[\Theta_{x+k}]	&\in \big[ 2 \, ,\, 2 + c' \e^{-q_n^2/2} \big]\label{expectationk1} \\
	Var(\Theta_x) &\leq c' C_n^2 \e^{16 q_n}
	\label{variancek1}\,.
	\end{align}
	
	$\ast$ if $k\geq 1$,
	\begin{align}
	\theta_{\cB} := E^{\bar\go}[\Theta_{x+k}]
	&\in\Big[ 2\frac{c_{x-1}}{c_{x+k-1}}\e^{-\gl k}\,,\, 2\frac{c_{x-1}}{c_{x+k-1}}\e^{-\gl k}+ c' \e^{4q_n}\Big]\label{expectationk2} \\
	Var(\Theta_{x+k})	&\leq	c' C_n^2  \e^{16 q_n} c_{x-1}^2 \label{variancek2}\,.
	\end{align}
\end{lemma}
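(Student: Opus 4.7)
The plan is to mimic the proof of Lemma~\ref{meanvarlemma1}, isolating the contribution of the anomalously large conductance $c_{x-1}$ and controlling the remainder via the standard bounds on conductances in good triblocks (Definition~\ref{def:goodtrapping2}).

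For the mean I would apply formula~\eqref{formulatheta} in the environment $\bar\omega$, which gives
\[
\theta_{\cB} = 1 + 2 \sum_{i=-C_n}^{x+k-2} \e^{-\lambda(x+k-1-i)} \frac{c_i}{c_{x+k-1}}.
\]
In the case $k=0$ the sum runs over $i\leq x-2$, so every $c_i \leq \e^{4q_n}$ while $c_{x-1} \geq \e^{q_n^2}$; each summand is at most $\e^{4q_n-q_n^2}\,\e^{-\lambda(x-1-i)}$ and the geometric series yields a residual of order $\e^{-q_n^2/2}$, giving \eqref{expectationk1}. In the case $k\geq 1$ the index $i=x-1$ is now included and contributes exactly $2\e^{-\lambda k} c_{x-1}/c_{x+k-1}$, the dominant term. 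For the remaining indices I would split $\{i\leq x-2\}$ (geometrically decaying factor $\e^{-\lambda(x+k-1-i)} \leq \e^{-\lambda(k+1)}$ times $c_i/c_{x+k-1}\leq \e^{O(q_n)}$) from $\{i=x,\dots,x+k-2\}$ (at most $k-1 \leq C_n$ terms, each $\leq \e^{-\lambda}\,\e^{O(q_n)}$), for a total residual $\leq c'\e^{O(q_n)}$; this proves~\eqref{expectationk2}.

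For the variance I would decompose $\Theta_{x+k}-1 = \sum_{j=-C_n}^{x+k-1} N_j$ where $N_j$ counts visits to $j$ before hitting $x+k$, and use Cauchy--Schwarz to get $\mathrm{Var}^{\bar\omega}(\Theta_{x+k}) \leq 2(C_n+x+k)\sum_j E^{\bar\omega}[N_j^2]$. Conditionally on being positive, $N_j$ is geometric of parameter $q(j) := P_j^{\bar\omega}(T_{x+k} < T_j^+)$, hence $E^{\bar\omega}[N_j^2] \leq 2/q(j)^2$. Formula~\eqref{classicone1} combined with the good-block conductance bounds yields $q(j) \geq c\,\e^{-8 q_n}$ for $j\neq x-1$ (the large value of $c_{x-1}$ only appears in the denominator of $q(j)$ and cannot degrade the lower bound), hence $E^{\bar\omega}[N_j^2] \leq c'\e^{16 q_n}$ for such $j$, exactly as in Lemma~\ref{meanvarlemma1}.

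The main new obstacle is the term $E^{\bar\omega}[N_{x-1}^2]$ in the case $k\geq 1$: the anomalously large $c_{x-1}$ forces the walk to revisit $x-1$ many times before reaching $x+k$. Using the gambler's ruin formula on $\{x-1,\dots,x+k\}$, together with $\omega_{x-1}\approx 1$ and the good-block bounds on $c_x,\dots,c_{x+k-1}$, I would derive $q(x-1) \geq c\,\e^{-O(q_n)}/c_{x-1}$, whence $E^{\bar\omega}[N_{x-1}^2] \leq c'\e^{O(q_n)} c_{x-1}^2$. Plugging this into the Cauchy--Schwarz bound, the $j=x-1$ contribution dominates (since $c_{x-1}^2 \geq \e^{2q_n^2}$), which gives \eqref{variancek2}. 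In the case $k=0$ the site $x-1$ is the starting point and $\omega_{x-1}\approx 1$ makes $q(x-1) = \Theta(1)$, so no such amplification arises and the argument of Lemma~\ref{meanvarlemma1} applies essentially verbatim to yield \eqref{variancek1}.
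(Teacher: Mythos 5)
Your treatment of the mean $\theta_{\cB}$ matches the paper's approach, and so does the overall variance strategy via the visit-count decomposition $\Theta_{x+k}-1=\sum_j N_j$ with $E^{\bar\go}[N_j^2]\leq c/q(j)^2$. There is, however, a genuine error in the variance part for $k\geq 1$.

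You assert that $q(j)\geq c\,\e^{-8q_n}$ for every $j\neq x-1$, justifying this by the remark that the large $c_{x-1}$ ``only appears in the denominator of $q(j)$ and cannot degrade the lower bound''. This is wrong for $j=x$. Writing $q(j)=1/\big(\pi(j)\,S(j,x+k-1)\big)$ with $\pi(j)=c_{j-1}^{\lambda}+c_j^{\lambda}$, the large conductance enters $\pi(x)=c_{x-1}^{\lambda}+c_x^{\lambda}$; equivalently, the first step from $x$ goes to $x+1$ only with probability $\go_x=c_x^{\lambda}/(c_{x-1}^{\lambda}+c_x^{\lambda})\asymp c_x/c_{x-1}$, which is tiny. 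Hence $q(x)\asymp \e^{O(q_n)}/c_{x-1}$, of the same (small) order as $q(x-1)$, and certainly not bounded below by $c\,\e^{-8q_n}$. The paper explicitly notes this: it uses the uniform bound $q(j)\geq c\,\e^{-8q_n}/c_{x-1}$ for \emph{all} $j<x+k$ and remarks ``one could get a better bound in the case $j\neq x-1,x$'', i.e.\ both $j=x-1$ and $j=x$ are the degenerate sites. Your claimed bound on $q(x)$ is simply false, so the step where you plug $E^{\bar\go}[N_x^2]\leq c'\e^{16q_n}$ into the sum is unjustified. As it happens, the correct bound $E^{\bar\go}[N_x^2]\leq c'\e^{O(q_n)}c_{x-1}^2$ still leads to \eqref{variancek2} (one more term of the same size as $j=x-1$ does not change the order), but you need to include $j=x$ in the ``bad'' set, or else use the paper's uniform bound over all $j$, for the argument to be sound.
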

Let us first bound \eqref{pony222} with the help of this lemma.
In the case $k=0$, we have that $\rho_{\cB}  = \rho_{x+k}$. Since $|\theta_{\cB} -2 | \leq c' \e^{- q_n^2/2}$, we get that the second term in \eqref{pony222} is bounded by $P^{\go} \big( \cG_1 \rho_{\cB}^{-1} > c' \gd \e^{q_n^2/2} \big) \leq \exp\{ - c'' \gd \e^{q_n^2/2} \} \leq c \gd^{-1} \e^{- q_n^2/2}$ (using that $\cG_1$ is a geometric r.v.\ with parameter $1/(1+\rho_{x+k}) > (2\rho_{\cB})^{-1}$).
For the first term in \eqref{pony222}, we get thanks to Chebychev inequality and the variance bound \eqref{variancek1}, as in \eqref{pony1}:
\begin{align*}
P^{\bar\go}\Big(\Big|\sum_{j=1}^{\cG_1-1}\Big(\Theta_{x+k}(j)-   \theta_{\cB} \Big)\Big|> \, \frac \delta{12}\rho_{\cB}\Big) &
 \leq P^{\bar\go}\big(\cG_1>\e^{q_n}\rho_{\cB}\big)+\e^{{q_n}}\rho_{\cB} \frac{Var(\Theta_x)}{\rho_{\cB}^2 (\gd/12)^2}	\\
&\leq \e^{- c\e^{q_n} }+c'\gd^{-2} \e^{20 q_n}  \rho_{\cB}^{-1} \, .
\end{align*}
All together, this gives that $D_2\leq c' \e^{- q_n^2/2}  + c''\gd^{-2} \e^{ - q_n^2/2}$.

In the case $k\geq 1$, recall that $\rho_{\cB}:= \frac{c_{x-1}}{c_{k+1}} \e^{-\lambda (k+1)}$: by \eqref{expectationk2}, we see that $| \theta_{\cB} \rho_\cB^{-1}- 2 \rho_{x+k}^{-1}|\leq c' \e^{4q_n}\rho_\cB^{-1}$. Hence, since $\cG_1$ is a geometric r.v.\ with parameter $1/(1+\rho_{x+k})$, the second term in \eqref{pony222} is bounded by $ \exp\{- c \gd \e^{- 4 q_n} \rho_{\cB} /\rho_{x+k}\} \leq \exp\{- c' \gd \e^{q_n^2/2 } e^{-\lambda k} \} \leq c'' \gd^{-1} \e^{-q_n^2/2} \e^{\lambda k}$, where we used that $\rho_{\cB}/\rho_{x+k} =  \e^{-\lambda k} c_{x-1}/c_{x+k-1}$, with $c_{x-1}> \e^{q_n^2}$ and $c_{x+k-1} \le \e^{4q_n}$.
It remains to bound the first term of \eqref{pony222}, again thanks to Chebychev inequality and  \eqref{variancek2} as above:
\begin{align*}
P^{\bar\go}\Big(\Big|\sum_{j=1}^{\cG_1-1}\Big(\Theta_{x+k}(j)-   \theta_{\cB} \Big)\Big|>\frac \delta{12}\rho_{\cB}\Big)
&\leq P^{\bar\go}\big(\cG_1>\e^{q_n}\rho_{x+k}\big)+\e^{{q_n}}\rho_{x+k}\frac{Var(\Theta_{x+k})}{\rho_{\cB}^2(\gd/12)^2}	\\
&\leq \e^{-\e^{q_n} /2}+c'\gd^{-2}\e^{-q_n^2/2}\e^{2 \gl k}, 
\end{align*}
where we used that $\rho_{x+k} Var(\Theta_{x+k})/\rho_{\cB}^2 \leq c' \e^{20 q_n} e^{2\lambda k} c_{x+k} c_{x+k-1}$, with $c_{x+k} < \e^{- q_n^2}$ and $c_{x+k-1}\leq \e^{4 q_n}$.
Collecting all the previous estimates, we get that 
$
D_2
\leq c\e^{-q_n^2/2} + c''\gd^{-2}\e^{-q_n^2/2}\e^{ 2 \gl k}
$,
which concludes the proof of \eqref{paolo2}.

\begin{proof}[Proof of Lemma \ref{meanvarlemma2}]  We follow the proof of Lemma \ref{meanvarlemma1}. We have, see~\eqref{formulatheta}
	\begin{align*}
	E^{\bar\go}[\Theta_{x+k}]
	=1+ E^{\bar\go}_{x+k-1}[T_{x+k}]
	=2+\frac{2}{c_{x+k-1}}\sum_{\ell=x+k -C_n}^{x+k-2}c_\ell \e^{- \gl(x+k-1 -\ell)}\,.
	\end{align*}  
In the case $k=0$, all $c_{\ell}$ in the sum are smaller than $\e^{4 q_n} \leq \e^{q_n^2/2}$, and $c_{x-1}> \e^{q_n^2}$, which gives \eqref{expectationk1}.
In the case $k\geq 1$, then we separate the term $\ell=x-1$ in the sum, and we use that all $c_\ell<\e^{4q_n}$ for $\ell\neq x-1 $ to obtain \eqref{expectationk2} (we also use that $C_n \leq \e^{q_n}$).

	For the second moment, we bound, analogously to \eqref{polipo}
	\begin{align}\label{polipo2}
	(\Theta_{x+k})^2\leq 2C_n \sum_{j\neq x-1,x} N_j^2
	\end{align}		
	with $N_j$ the random variable counting the number of visits to point $j$ before touching $x+k$, for $j\in\{-C_n,\dots,x+k-1\}$. 
As above, $N_j \leq Y(j)$, where $Y(j)$ is a geometric random variable with parameter $q(j)=P_{j}^{\bar \go}(T_{x+k}<T_j)$. 
Thanks to  \eqref{classicone1}, we get that if $k=0$ then $q(j) \geq c \e^{-8q_n}$ for all $j<x$, by definition of a good block; on the other hand, if $k>0$, we get that $q(j) \geq c \e^{- 8 q_n} / c_{x-1}$ for all $j< x+k$ (one could get a better bound in the case $j\neq x-1,x$, but we do not need it).
Then, using that $E^{\go}_{x+k-1}[N_j^2] \leq 1/q(j)^2$, 
equations \eqref{variancek1} and \eqref{variancek2}  follow by taking the expectation in \eqref{polipo2} (and using that $C_n \leq \e^{q_n}$).
\end{proof}


\subsection{Reduction to a finite number of traps}

First of all, using a union bound and then \eqref{paolo2} (with $\gd=  \frac{d_n \e^{-q_n}}{|\cJ_n| \rho_{\cB_j}} \leq |\cJ_n|^{-1} <1$), we get that for all a.e.\ $\go$, for $n$ large enough
\begin{align*}
P^{\go} \Big(  \Big| \sum_{j\in \cJ_n}T(\cB_j) -\sum_{j\in \cJ_n} 2 \rho_{\cB_j} \mathbf{e}_{\cB_j}  \Big| \geq d_n \e^{- q_n} \Big)
 	&\leq \sum_{j\in \cJ_n} P^{\go} \Big( \big| T(\cB_j) - 2 \rho_{\cB_j} \mathbf{e}_{\cB_j}  \big| \geq \frac{ d_n \e^{-q_n}  }{ |\cJ_n| } \Big) \\
	& \leq c  |\cJ_n|^3  \e^{- q_n^2/4}  \, .
\end{align*}
Then, by Lemma~\ref{disgiunti2}, a.s.\ $|\cJ_n|\leq \e^{5q_n}$ for $n$ large, so this goes to $0$. Hence, together with~\eqref{reductiontolargetraps2}, and using that $\gep_n^{(1-\ga)/2} \geq \e^{-q_n}$ for $n$ large, we have that with $\bbP\otimes P^{\go}$-probability going to $1$, as $n\to+\infty$,
\begin{equation}
\label{reductiontolargetraps3}
 \frac{1}{d_n} \big| T_n - 2 \sum_{j\in \cJ_n} \rho_{\cB_j} \mathbf{e}_{\cB_j} \big| \leq  2 \gep_n^{(1-\ga)/2} \to 0 \, .
\end{equation}

Now, the analogous of Proposition~\ref{prop:fewblocks} holds: for any $\eta,\eta'>0$, there exists $\gep = \gep(\eta, \eta')$ such that 
\eqref{eq:propreduction} holds also in the present case,  \textit{i.e.}\ the main contribution to $\sum_{j\in \cJ_n} \rho_{\cB_j} \mathbf{e}_{\cB_j}$ comes from the blocks with $\rho_{\cB}> \gep d_n$.
The proof  
follows the same scheme as that of Proposition~\ref{prop:fewblocks}, with fewer technicalities since $\tau_{\cB_j}$ is simply replaced by $2\mathbf{e}_{\cB_j}$ (we do not need Lemma~\ref{lem:limittauB}). In particular, Step~1 of the proof is the same (up to \eqref{endofstep1}), but Step~2 is much easier, since $\cH_{\ell}$ is now the sum of $N_{\ell}$ independent exponential random variables. We do not provide the details here, since they are straighforward.

In the end, \eqref{eq:propreduction} combined with \eqref{reductiontolargetraps3} shows that for any fixed $\eta,\eta'>0$, we can choose $\gep$ so  that \eqref{reductiontofew} holds with $\bbP\otimes P^{\go}$-probability at least $1-\eta'$.

\subsection{Convergence}
The analogous of  Proposition~\ref{prop:convergence} holds and is in fact much  simpler, since $\xi_{\cB} =2$ under Assumption~\ref{assumptionB}.
We  simply need to use that for any $\gep>0$, $\{ (\tfrac{i}{k_n} , \tfrac{1}{d_n} \rho_{\cB_i}) ; 1\leq  i\leq k_n, \rho_{\cB_i} > \gep d_n\}$ converges in distribution to a Poisson Point Process $\cP_{\gep}$ on $[0,1]\times \bbR_+$ with intensity $\dd x \ga w^{-(1+\ga)} \ind_{\{w>\gep\}} \dd w$---this is due to the tail behavior of $\rho_0$ and to the definition~\eqref{def:dn} of $d_n$, see the proof of Proposition~\ref{prop:convergence}.
This easily gives that, analogously to \eqref{convergencesumK}, we have the following  under $\bbP\otimes P^{\go}$
\[
2\sum_{i=1}^{k_n} \rho_{\cB_i} \mathbf{e}_{\cB_i} \ind_{\{ \rho_{\cB_i} > \gep d_n\}} \  \Longrightarrow  \  2 \sum_{(x, w, r) \in \bar \cP} w\,  r \, \ind_{\{x\leq 1\}}\ind_{\{ w > \gep \}} \, ,
\]
where $\bar \cP$ is a PPP on $(\bbR_+)^{3}$ with intensity $\dd x \, \ga w^{-(1+\ga)}  \dd w  \e^{-r }\dd r$.
All together, with~\eqref{reductiontolargetraps3}, letting $\gep\downarrow 0$, we get as in \eqref{eq:compoundPoisson} that
\begin{equation}
\label{eq:compoundPoisson2}
\frac{1}{d_n} T_n  \ \Longrightarrow \
  2 \Big(  \frac{\pi \ga}{\sin(\pi \ga)} \Big)^{1/\ga}\,   \sum_{(x, w) \in  \cP} w \ind_{\{x\leq 1\}} \, ,
\end{equation}
with $\cP$ a PPP on $(\bbR_+)^2$ of intensity $\dd x\, \frac{\ga}{\Gamma(1-\ga)} w^{-(1+\ga)} \, \dd w$.
The conclusion of the proof of Theorem~\ref{thm:passagetime}, \textit{i.e.}\ the convergence of the process, is identical to Section~\ref{sec:convproc} from that point on.

\begin{appendix}


\section{Some formulas for resistor networks}
\label{app:formulas}

In this section, we recall some classical formulas for resistor networks, which translate into properties for the hitting times of random walks among random conductances.
The first important identity is the following: for $ i<x<j$, we have
\begin{align}\label{classicone}
P_x^{\go}(T_i<T_j)=\frac{\Ceff(\{x\}\leftrightarrow \{i\})}{\Ceff(\{x\}\leftrightarrow \{i,j\})}\, .
\end{align}
The effective conductance $C_{\rm eff}$ is here (see \cite{LP16} for the general definition)
\begin{align}
C_{\rm eff}(\{x\}\leftrightarrow\{i\})&=S(i,x-1)^{-1}\label{effettivamente1}\\
C_{\rm eff}(\{x\}\leftrightarrow\{i, j\})&=S(i,x-1)^{-1}+S(x,j-1)^{-1}\,,\label{effettivamente2}
\end{align}
with (recall the definition \eqref{def:rhok} of $\rho_x^{(k)}:= e^{-\lambda(k+1)} c_{x-1}/c_{x+k}$)
\begin{equation}
\label{Sij}
S(i,j):=\sum_{\ell=i}^j\frac{1}{c_\ell^{\lambda}} = \frac{1}{c_{i-1}^{\lambda}}  \sum_{k=0}^{j-i} \rho_{i}^{(k)} . 
\end{equation}
For the first formula we have used that we have conductances in series, while for the second formula we have two sequences of conductances-in-series that are in parallel. Then, \eqref{effettivamente1} and \eqref{effettivamente2} together with \eqref{classicone} give
\begin{align}\label{classicone1}
P_x^{\go}(T_i<T_j)
	=\frac{S(x,j-1)}{S(i,j-1)}\,.
\end{align}

This apply for instance to the probability $p_{\cB}$
appearing in Section~\ref{sec:crossingontraps}:
we have that
\begin{equation}
\label{formulap}
p_x 
	:= P_{x+1}^{\go}(T_{x+ C_n} > T_{x})
	= \Big( 1+ c_{x} \sum_{j=1}^{C_n} \frac{1}{c_{x+j}}  \e^{-\lambda(j+1)}\Big)^{-1}  \, .
\end{equation}

Another important identity we use throughout the paper deals with the expectation of the hitting times.
We use the following representation, cf.~\cite[Eq.~(3.22)]{B06}:
\begin{align}\label{magicformula}
E_x^{\go} [T_y]&=\frac{1}{C_{\rm eff}(\{x\}\leftrightarrow\{y\})}\sum_{z<y}\pi(z)P^\go_z(T_x<T_y)
\end{align}
where $\pi(z):=c_{z-1}^\gl+c_z^\gl$  is a reversible measure for $(X_n)_{n\in\N}$.
We notice that in \eqref{magicformula} the quantity $P^\go_z(T_x<T_y)$ is equal to $1$ for $z\leq x$, while for $x<z<y$ we can use \eqref{classicone1}.
We therefore get that, after calculation, see \cite[Eq.~(2.10)]{BS17}, for $y>x$
\begin{align}\label{natale}
E_x^{\go}[T_y] &= \sum_{z\leq x} (c_{z}^{\lambda}+c_{z-1}^{\lambda}) S(x,y-1)  + \sum_{x<z<y} (c_{z}^{\lambda}+c_{z-1}^{\lambda}) S(z,y-1) \notag\\
& = 2\sum_{z \leq x-1} c_{z}^{\lambda} S(x,y-1) + \sum_{x\leq z <y} c_z^{\lambda} S(z,y-1) +  \sum_{x <  z < y} c_{z-1}^{\lambda} S(z,y-1)  \notag \\
& =(y-x) + 2  \sum_{z \leq x} c_{z-1}^{\lambda}  S(x,y-1) +  2 \sum_{x <  z < y} c_{z-1}^{\lambda} S(z,y-1) 
\end{align}
(we used that $c_z S(z,y-1) = 1+ c_z S(z+1,y)$, and that $S(y,y-1)=0$).
We can finally rewrite this as follows:
\begin{align}
E_x^{\go}[T_y] = (y-x) + 2 \sum_{z< x} \sum_{k=x-z}^{y-z} \rho_z^{(k)}  + 2 \sum_{ z=x}^y \sum_{k=0}^{y-z} \rho_z^{(k)} \, .
\label{realmagic}
\end{align}
This applies for instance to quantity $\theta_x = 1+ E^{\go}_{x-1}[T_x] $ defined in Section~\ref{sec:crossingontraps}: 
\begin{equation}
\label{formulatheta}
\theta_x 
	=  1+E^{\bar \go}_{x-1}[T_x] 
	= 2+ \frac{2}{c_{x-1}} \sum_{\ell =x-C_n}^{x-2} c_\ell \e^{-\lambda (x-1-\ell)} \, .
\end{equation}
We also need a formula for the expectation of hitting times for a random walk killed at some site $z$. It is not explicitly stated in \cite{B06}, but can be obtained in the same way as \eqref{magicformula} (as mentioned in the sentence before \cite[eq.~(3.21)]{B06}):
for $y<x<v$, we have
\begin{align}\label{magicformula2}
E_x^{\go} [T_{y} \1{T_y<T_v}]
	=\frac{1}{C_{\rm eff}(\{x\}\leftrightarrow \{y,v\})}\sum_{y < z< v}\pi(z) P^\go_z \big(T_x< T_{y} \wedge T_v \big) P^\go_z( T_y < T_v)\,.
\end{align}


\section{Estimates conditionally on having a large trap}
\label{sec:rho}

Let us recall Proposition~\ref{prop:rho0}, which derives from \cite[Corollary 5]{C86}. It gives the following sharp asymptotics:  under Assumption~\ref{assumptionA} we have as $t\to+\infty$
\begin{equation}
\label{rhosimple}
\bbP( \rho_0 >t) \e^{\lambda\ga} \sim \bbE[c_0^{\ga}]  L_0(t) t^{-\ga_0} \ind_{\{\bbE[c_0^{\ga}] <+\infty \}} +\bbE[1/c_0^{\ga}]  L_\infty(t) t^{-\ga_{\infty}} \ind_{\{\bbE[1/c_0^{\ga}] <+\infty \}} ;
\end{equation}
under Assumption~\ref{assumptionB} we have as $t\to+\infty$
 \begin{equation}
 \label{rhowellandwall}
\bbP( \rho_0 >t) \e^{\lambda \ga} \sim  \ga \frac{\Gamma (1+\gamma_0) \Gamma(1+\gamma_{\infty})}{ \Gamma(1+\gamma_0+\gamma_{\infty})}   \, (\log t)   L_0(t)L_{\infty}(t) t^{-\ga} \, .
\end{equation}

\subsection{Proof of Propositions~\ref{prop:conditional1}}


Recall that when $\bbE[c_{0}^{\ga}]<+\infty$, we define $\bar c_{-1}$ a random variable with c.d.f.\ $F_{\bar c_{-1}}(u) = \frac{1}{\bbE[c_{-1}^{\ga}]} \bbE[c_{-1}^{\ga} \ind_{c_{-1}\leq u}]$, and similarly for $1/\bar c_0$.
We consider two cases separately: 1) $\bbE[c_{0}^{\ga}]  < +\infty$, $\bbE[1/c_0^{\ga}]  = +\infty$ (the case $\bbE[c_{0}^{\ga}] = +\infty$, $\bbE[1/c_0^{\ga}] <+\infty$ is symmetric); 2) the case $\bbE[c_0^{\ga}] ,\bbE[1/c_0^{\ga}] =+\infty$.

{\bf 1)} Suppose that $\bbE[c_0^{\ga}]  < +\infty$, $\bbE[1/c_0^{\ga}]  = +\infty$, so the second term in \eqref{rhosimple} is equal to~$0$.
Then for any fixed $0<a<b$ and $v>0$ we have as $t\to+\infty$
\begin{align*}
\bbP( c_{-1} \in (a,b]  ,1/c_0>v , \rho_0>t) &= \bbE \big[ \bbP( c_{-1}/c_0 > \e^{\lambda} t \mid c_{-1}) \ind_{\{c_{-1} \in (a,b]\}} \big] \\
	& = (1+o(1)) L_{0}(t) \e^{-\lambda \ga_0} t^{-\ga_{0}} \bbE[c_{-1}^{\ga} \ind_{\{ c_{-1} \in (a,b]\}}] \, .
\end{align*}
Here we used that $\bbP(c_{-1}/c_0 > \e^{\lambda } t/x) = (1+o(1)) L_{0}(t) \e^{-\lambda \ga_0} (t/x)^{-\ga_{0}}$ as $t\to\infty$, uniformly for $x \in (a,b]$.
Letting $b\to \infty$, we  get that $\bbP( c_{-1} >a  , 1/c_0>v , \rho_0 >t) = (1+o(1)) L_{0}(t) \e^{-\lambda \ga_0} t^{-\ga_{0}} \bbE[c_{-1}^{\ga} \ind_{\{ c_{-1} >a \}}] $.

As a consequence, in view of \eqref{rhosimple}, we have that
\[ \bbP\big( c_{-1} >a , 1/c_0>v \mid  \rho_0>t\big) \xrightarrow{t\to+\infty}  \frac{1}{\bbE[c_{-1}^{\ga}]}\bbE[c_{-1}^{\ga} \ind_{\{c_{-1} > a\}}] = 1- F_{\bar c_{-1}}(a) \, , \]
giving that $(c_{-1}, 1/c_0)$, conditionally on $\rho_0>t$, converges in distribution to $ (\bar c_{-1}, +\infty)$ (\textit{i.e.}\ $B=0$ in \eqref{limitconditional}).

{\bf 2)} If $\bbE[c_0^{\ga}]<+\infty$, $\bbE[1/c_0^{\ga}] <+\infty$ (necessarily $\ga=\ga_0=\ga_{\infty}$), then both $\bar c_{-1}$ and $1/\bar c_0$ are well defined, and both terms in \eqref{rhosimple} are non-null.
The same reasoning as above is still valid: 
for any fixed $0<a<b$ and $v>0$, and any fixed $0<c<d$ and $v'>0$, we get as $t\to+\infty$
\begin{align*}
\bbP\big( c_{-1}\in (a,b] , 1/c_0>v , \rho_0>t\big) 
	& =(1+o(1)) L_0(t) \e^{-\lambda \ga_0} t^{-\ga_0} \bbE[c_{-1}^{\ga} \ind_{\{c_{-1}\in (a,b]\}}] \, ;\\
\bbP\big( c_{-1}>v' , 1/c_0 \in (c,d] , \rho_0>t\big) 
	& =(1+o(1)) L_{\infty}(t) \e^{-\lambda \ga_{\infty}}  t^{-\ga_{\infty}} \bbE[1/c_0^{\ga} \ind_{\{1/c_0\in (c,d]\}}]\, .
\end{align*}
As a consequence, we get that for any $a,c>0$, as $t\to+\infty$
\[
\bbP\big( c_{-1} >a , 1/c_0>c , \rho_0>t\big) \sim  L_0(t) \e^{-\lambda \ga} t^{-\ga} \bbE[c_{-1}^{\ga} \ind_{\{c_{-1}>a\}}]+ L_{\infty}(t) \e^{-\lambda \ga}  t^{-\ga} \bbE[1/c_0^{\ga} \ind_{\{1/c_0>c\}}]\, .
\]
In view of \eqref{rhosimple}, we get that
\begin{align*}
\bbP\big( c_{-1} >a , 1/c_0>c \mid \rho_0>t\big) \xrightarrow{t\to+\infty}  q \,(1-F_{\bar c_{-1}}(a)) + (1-q)\, (1-F_{1/\bar c_0}(c)) \, ,
\end{align*}
where $q = \lim_{t\to +\infty} \frac{\bbE[c_0^{\ga}]  L_0(t) }{\bbE[c_0^{\ga}]  L_0(t)+ \bbE[1/c_0^{\ga}]  L_\infty(t) } =\lim_{c\to +\infty} \lim_{t\to+\infty} \bbP( 1/c_0>c \mid \rho_0>t)$. This concludes the proof.

\subsection{Proof of Propositions~\ref{prop:conditional2}}
Under Assumption~\ref{assumptionB}, as above, we get that for any fixed $b,d>0$, as $t\to+\infty$
\begin{align*}
\bbP\big( c_{-1} \leq a , \rho_0>t\big) & =(1+o(1))  L_0(t)  \e^{-\lambda \ga_0} t^{-\ga_0} \bbE[c_{-1}^{\ga_0} \ind_{\{c_{-1} \leq b\}}] \, ;\\
\bbP\big(  1/c_0 \leq  d , \rho_0>t\big) & =(1+o(1))  L_{\infty}(t)  \e^{-\lambda \ga_\infty} t^{-\ga_{\infty}}  \bbE[1/c_0^{\ga_\infty} \ind_{\{1/c_0 \leq d\}}]\, .
\end{align*}
Since $L_0(t) = \gp_0(\log t) (\log t )^{\gamma_0}$,  $L_{\infty}(t) = \gp_{\infty}(\log t) (\log t)^{\gamma_{\infty}} $ with$\gamma_\infty,\gamma_0>-1$, we have that  $L_0(t), L_{\infty}(t)$ are both negligible compared to $(\log t) L_0(t) L_{\infty}(t)$.
In view of \eqref{rhowellandwall}, for any fixed $b,d>0$, we get
$
\bbP \big( c_{-1}\leq b \text{ or } 1/c_0\leq d \mid \rho_0>t\big) \to  0\, ,$
which concludes the proof.


\subsection{Distribution of $\tau_{\cB}$ conditionally on having $\rho_{\cB}$ large}
\label{sec:appb2}
In this section, we prove Lemma~\ref{lem:tauB} and \ref{lem:limittauB}: we consider the case of \ref{assumptionA}.
For simplicity, we assume that $\rho_{\cB} = \rho_0$ (\textit{i.e.}\ $x_{\cB}=0$). We recall from \eqref{app:tau} that $\tau_{\cB}$ is equal to
$
\tau_{\cB} := \xi_{\cB} \, \mathbf{e}_{\cB}=\frac{\theta_{\cB}}{p_{\cB}} \, \mathbf{e}_{\cB}
$
with $\mathbf{e}_{\cB}\sim \mathrm{Exp}(1)$, and with 
\begin{equation}
\label{p0}
p_{\cB} = p_0  =  P_{1}^{\go}(\tau_0>\tau_{C_n}) =  \Big( 1+  c_{0} V^{(n)} \Big)^{-1} \qquad \text{with }  V^{(n)}:= \sum_{j = 1}^{C_n} \frac{1}{c_j} \e^{-\lambda (j+1)} \, ,
\end{equation} 
\begin{equation}
\theta_{\cB} =\theta_0 = 1+E_{0}^{\bar \go}[ T_{1}] =  2\Big(  1+ \frac{1}{c_{-1}} W^{(n)}  \Big) \qquad \;\,\quad\text{with } W^{(n)} := \sum_{j = 2}^{C_n} c_{-j} \e^{- \lambda(j+1)} \, .
\label{theta0}
\end{equation}
We refer to \eqref{formulap}, \eqref{formulatheta} for the formulas.

Let us stress right away that $\bbP(V^{(n)}>t)$ is bounded by a constant times $\bbP(1/c_0 >t)$. Indeed, denoting $C_{\lambda}:= \sum_{j\geq 1} \e^{-\lambda(j+1)}$, a union bound gives  that $\bbP(V^{(n)}>t)$ is bounded by
\begin{align}
 \sum_{j\geq 1}  \bbP \Big( \frac{1}{c_j} \geq  \frac{\e^{\lambda(j+1)} t}{C_{\lambda}}  \Big)  \leq \sum_{j \geq 1} c'_{\lambda} \e^{-\frac{\ga}{2}\lambda(j+1)}  \bbP(1/c_j >t)
=c \bbP(1/c_{0} >t) \, ,
\label{tailV}
\end{align}
where we also used Potter's bound. Similarly, $\bbP(W^{(n)}>t) \leq c\,  \bbP(c_{-1} >t)$.

Before we prove Lemma~\ref{lem:tauB}, we prove the following result, which deals with the tail distribution of $p_{0},\theta_0$ conditionally on having $\rho_0$ large.

\begin{lemma}
\label{lem:ptheta}
For any $\tilde \gd , \bar \gd >0$ fixed small enough, there is a constant $c>0$ such that for any $1\leq t\leq n^{2 \bar \gd}$ and any $A_n \geq n^{\frac1\ga - \bar \gd}$,
\begin{align}
\label{forp}
\bbP \big(1/p_{0} > t \mid  \rho_0 \geq A_n \big) & \leq  c t^{-2\ga_{\infty} +\tilde \gd} + c   \bbP \big(1/c_0 >t  \big)  f_{\infty}(A_n) \, ,\\
\bbP\big(\theta_0 > t \mid  \rho_{0} \geq A_n \big) &\leq c t^{-2\ga_{0} + \tilde \gd} + c   \bbP \big(c_{-1} >t  \big) f_0(A_n) \, ,
\label{fortheta}
\end{align}
with $f_{\infty}(A_n):=  \bbP \big( c_{-1} >A_n \big)  /  \bbP(\rho_0 >A_n) $ and $f_0(A_n):=\bbP \big( 1/c_0 >A_n \big)  /  \bbP(\rho_0 >A_n) $.
\end{lemma}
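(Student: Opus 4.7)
The bounds \eqref{forp} and \eqref{fortheta} are symmetric under the involution $c_x \leftrightarrow 1/c_x$, which swaps the roles of large and small conductances and hence exchanges $V^{(n)} \leftrightarrow W^{(n)}$, $c_0 \leftrightarrow 1/c_{-1}$, $\ga_0 \leftrightarrow \ga_\infty$, and $L_0 \leftrightarrow L_\infty$; I will therefore focus on \eqref{forp}. The starting point is the identity $1/p_0 = 1 + c_0 V^{(n)}$ from \eqref{p0}, the fact that $V^{(n)}$ is independent of the pair $(c_0, c_{-1})$, and the tail estimate $\bbP(V^{(n)} > u) \leq c\, L_0(u)\, u^{-\ga_0}$ from \eqref{tailV}. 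Writing the trap event as $\{\rho_0 \geq A_n\} = \{c_{-1} \geq e^\lambda A_n c_0\}$ and conditioning on $c_0$,
\[
\bbP(c_0 V^{(n)} > t-1,\ \rho_0 \geq A_n) = \bbE\bigl[\bbP(V^{(n)} > (t-1)/c_0)\, \bar F_{c_{-1}}(e^\lambda A_n c_0)\bigr],
\]
with $\bar F_{c_{-1}}(x) := \bbP(c_{-1} \geq x)$; the final normalization is $\bbP(\rho_0 \geq A_n) \sim \psi(A_n) A_n^{-\ga}$ from Proposition~\ref{prop:rho0}.

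My strategy splits the $c_0$-range into three regimes tied to trap morphology, and treats each by independence plus Potter's bound. In the \emph{well-like} regime $\delta \leq c_0 \leq K$ for fixed $\delta, K$, the factor $\bbP(V^{(n)} > (t-1)/c_0)$ is bounded by $c L_0(t) t^{-\ga_0}$ (Potter slack in $c_0$), while $\bar F_{c_{-1}}(e^\lambda A_n c_0) \leq c \bar F_{c_{-1}}(A_n)$; after dividing by $\bbP(\rho_0 \geq A_n)$ the quotient $\bar F_{c_{-1}}(A_n)/\bbP(\rho_0 \geq A_n) = f_\infty(A_n)$ produces exactly the announced second additive term $c L_0(t) t^{-\ga_0} f_\infty(A_n)$. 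In the \emph{wall-like} regime $c_0 \leq \delta$, I further split at $c_0 = A_n^{-1/2}$: on $\{c_0 \leq A_n^{-1/2}\}$ the constraint $c_0 V^{(n)} > t-1$ forces $V^{(n)} > (t-1)\sqrt{A_n}$, and using the independence of $V^{(n)}$ the contribution is bounded by $\bbP(V^{(n)} > t\sqrt{A_n})\, \bbP(\rho_0 \geq A_n)$; dividing out, the estimate $\bbP(V^{(n)} > t\sqrt{A_n}) \leq c L_0(t\sqrt{A_n}) t^{-\ga_0} A_n^{-\ga_0/2}$ is converted into a negative power of $t$ via the hypotheses $A_n \geq n^{1/\ga - \bar\gd}$ and $t \leq n^{2\bar\gd}$ (for $\bar\gd$ small, $A_n^{-\ga_0/2}$ dominates any fixed positive power of $t$), so that the resulting bound is absorbed into $t^{-2\ga_\infty + \tilde\gd}$. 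The sub-band $A_n^{-1/2} < c_0 \leq \delta$ is handled by a dyadic decomposition $c_0 \in (2^k A_n^{-1/2}, 2^{k+1} A_n^{-1/2}]$ with the same independence-plus-Potter mechanism; the resulting geometric series in $k$ converges owing to the joint decay of the $V^{(n)}$ and $c_{-1}$ tails. Finally, in the \emph{$c_0$-driven} regime $c_0 > K$ I introduce an auxiliary cutoff $t^{1-\epsilon}$, bounding $\bbP(V^{(n)} > (t-1)/c_0) \leq \bbP(V^{(n)} > t^\epsilon/2)$ on $K < c_0 \leq t^{1-\epsilon}$ and leaving $c_0 > t^{1-\epsilon}$ for a direct Karamata/Potter estimate of $\bbE[\bar F_{c_{-1}}(e^\lambda A_n c_0); c_0 > t^{1-\epsilon}]$; the joint bound ends up of order $A_n^{-\ga_\infty} L_\infty(\cdot) L_\infty(\cdot) t^{-2\ga_\infty(1-\epsilon) - \epsilon \ga_0}$, and after dividing by $\bbP(\rho_0 \geq A_n)$ and using $A_n^{-(\ga_\infty-\ga)} \leq 1$, choosing $\epsilon$ small gives the desired $t^{-2\ga_\infty + \tilde\gd}$.

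The main obstacle will be the bookkeeping of slow variation: one must calibrate the parameters $\delta, K, \epsilon, \tilde\gd$, and $\bar\gd$ in a consistent order so that Potter's inequality absorbs every residual $L$-factor, uniformly over the allowed range $1 \leq t \leq n^{2\bar\gd}$, $A_n \geq n^{1/\ga - \bar\gd}$. A secondary issue is that the three regimes permitted by Assumption~\ref{assumptionA} (wells only, walls only, or both with $\ga_0 = \ga_\infty$) modify both the asymptotic of $\psi(A_n)$ in Proposition~\ref{prop:rho0} and the size of $f_\infty(A_n)$ (which may be bounded away from $0$, vanishing, or $O(1)$ respectively); the two-term structure of the bound is flexible enough to capture each of these configurations, provided the split thresholds above are chosen uniformly, and the proof of \eqref{fortheta} is then identical by the symmetry noted at the outset.
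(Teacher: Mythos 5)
Your overall skeleton — write $1/p_0 = 1 + c_0 V^{(n)}$, exploit the independence of $V^{(n)}$ from $(c_0,c_{-1})$, and decompose by the size of $c_0$ — is the same as the paper's. But there is a genuine gap in the regime $c_0 > K$. You bound $\bbP(V^{(n)} > (t-1)/c_0)$ by the single worst-case value $\bbP(V^{(n)} > t^\epsilon/2)$ over the entire range $K < c_0 \leq t^{1-\epsilon}$, handle $c_0 > t^{1-\epsilon}$ separately by the Karamata estimate on $\bbE[\bar F_{c_{-1}}(e^\lambda A_n c_0);\, c_0 > t^{1-\epsilon}]$, and then assert the combined contribution is of order $A_n^{-\ga_\infty} t^{-2\ga_\infty(1-\epsilon)-\epsilon\ga_0}$. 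That expression is the \emph{product} of a factor coming from $c_0 \in (K, t^{1-\epsilon}]$ and a factor coming from $c_0 > t^{1-\epsilon}$; but these two ranges are disjoint, so their contributions must be \emph{added}, not multiplied. Adding them gives (roughly, after dividing by $\bbP(\rho_0 \ge A_n)$) a term $t^{-\epsilon\ga_0} f_\infty(A_n) + t^{-2\ga_\infty(1-\epsilon)}$, and for $\epsilon$ small the first summand $t^{-\epsilon\ga_0}$ is far larger than either of the target terms $t^{-2\ga_\infty+\tilde\gd}$ or $L_0(t)t^{-\ga_0}f_\infty(A_n)$. There is no choice of $\epsilon \in (0,1)$ that closes the two requirements simultaneously: pushing $\epsilon$ up to make $t^{-\epsilon\ga_0}$ small enough destroys the $c_0 > t^{1-\epsilon}$ estimate (and is outright impossible when $\ga_0 = \ga_\infty$, since one would need $\epsilon \ge 2 - \tilde\gd/\ga$). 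What is actually needed in this region is a full dyadic split of $c_0$ over $[1,t]$, as in the paper (sum over $c_0 \in [2^l,2^{l+1}]$, $0\le l \le \log_2 t$), which records, scale by scale, the simultaneous decay of the $V^{(n)}$-tail and the $c_{-1}$-tail; the resulting geometric factor $(2^l)^{\ga_0 - 2\ga_\infty + \tilde\gd}$ is what produces $\bbP(1/c_0>t)\bbP(c_{-1}>A_n)$ when it converges and $t^{-2\ga_\infty+2\tilde\gd}\bbP(c_{-1}>A_n)$ when it does not. A single uniform cutoff cannot recover this.

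A secondary inaccuracy: in your region $A_n^{-1/2}<c_0\le\delta$ you assert the geometric series converges ``owing to the joint decay of the $V^{(n)}$ and $c_{-1}$ tails.'' In fact the ratio per dyadic step is $2^{2\ga_0-\ga_\infty}$, which is $>1$ whenever $\ga_\infty < 2\ga_0$ — in particular whenever $\ga_0 = \ga_\infty$. The sum is then dominated by the endpoint $c_0 \asymp \delta$, contributing a term of order $L_0(t)t^{-\ga_0}\bbP(c_{-1}>A_n)$ which is absorbed into the second target term, so the estimate survives — but the justification as stated is wrong. This is the same distinction the paper makes explicit with its two cases $\ga_\infty < 2\ga_0-\tilde\gd$ vs.\ $\ga_\infty \ge 2\ga_0-\tilde\gd$.
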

Notice that, as a function of $t$, these bounds are regularly varying.

\begin{proof}
We only treat \eqref{forp}, the other bound \eqref{fortheta} being similar.
Using \eqref{p0}, we write  
\begin{align*}
 \bbP \big(  1/p_0 > t  \mid \rho_0>A_n  \big) = 
 	\bbP \big(  c_0 V^{(n)} > t/2  , \rho_0  >A_n  \big) \,/\, \bbP(\rho_0 >A_n)\, .
\end{align*}
We  split $\bbP (  c_0 V^{(n)} > t/2  , \rho_0 >A_n  )$ into four parts (recall that $\rho_0 = \e^{-\lambda} c_{-1}/c_0$)
\begin{align}
 \bbP  \big( & c_0 V^{(n)} > t/2  , \rho_0 >A_n  , c_0 <1/A_n \big) 
  + \bbP  \big(  c_0 V^{(n)} > t/2  , \rho_0 >A_n  , c_0 \in [A_n^{-1},1] \big) \notag \\
&+ \bbP  \big(  c_0 V^{(n)} > t/2  , \rho_0 >A_n  , c_0 \in [1,t] \big) 
 + \bbP  \big(  c_0 V^{(n)} > t/2  , \rho_0 >A_n  , c_0  > t \big)
 \label{fourterms}
\end{align}

\textbullet\ The first term is bounded by $\bbP ( V^{(n)}> t A_n/2) \bbP( 1/c_0 >A_n  )$, so that, recalling \eqref{tailV}, it is bounded by a constant times $A_n^{- 2\ga_0 + \tilde \gd}$, which is itself bounded by a constant times $A_n^{- \ga_0+ 2 \tilde \gd } \bbP(\rho_0 >A_n)$ (we have $\ga_0\geq \ga$).

\textbullet\  The last term in \eqref{fourterms} is bounded by $\bbP(c_{-1} >t A_n) \bbP(c_{0} >t) \leq c t^{-2\ga_{\infty}+\tilde \gd} \bbP(c_{-1}>A_n)$, the inequality coming from Potter's bound. Note also that   $ \bbP(\rho_0 >A_n) \geq c \bbP( c_{-1} > A_n) $.

\textbullet\  Using \eqref{tailV} and Potter's bound, we get that the second term in \eqref{fourterms} is bounded by
\begin{align}
\sum_{l=0}^{\log_2(A_n) -1}
\bbP  \big( V^{(n)}> 2^{l-1} t \big) &\bbP(c_{-1} >2^{-(l +1)} A_n) \bbP \big( c_0\in [2^{-(l+1)},2^{-l} ] \big) \notag\\
& \leq c\sum_{l=0}^{\log_2(A_n)} (2^{l})^{-2\ga_{0} +\ga_{\infty} + \tilde\gd} \bbP \big( 1/c_0 > t \big) \bbP(c_{-1} > A_n) \, .
\label{secondterminfourterms}
\end{align}

\textbullet\ Finally, the third term  in \eqref{fourterms} is bounded by
\begin{align*}
\sum_{l=0}^{\log_2(t)-1} \bbP  \big( V^{(n)}> 2^{-l-2} t \big)& \bbP(c_{-1} >2^l A_n) \bbP  \big (c_0\in [2^l,2^{l+1}] \big ) \notag \\
& \leq c\sum_{k=0}^{\log_2(t)} (2^{l})^{\ga_{0} -2\ga_{\infty} + \tilde \gd } \bbP \big( 1/c_0 > t \big) \bbP(c_{-1} > A_n) \, .
\end{align*}

Now, if $\ga_{\infty} < 2 \ga_0 -\tilde \gd$ and $\ga_0< 2\ga_{\infty}-\tilde \gd$ (which is the case for instance if $\ga_0=\ga_{\infty} =\ga$  and $\tilde \gd$ small), then both sums over $k$ are finite: we get that the second and third term in \eqref{fourterms} are bounded by $\bbP \big( 1/c_0 > t \big) \bbP(c_{-1} > A_n)$.
If on the other hand $\ga_{\infty} \geq  2 \ga_0 - \tilde \gd >\ga_0$ (in which case $\ga_0< 2\ga_{\infty}- \tilde \gd$), then we get that $\sum_{l=0}^{\log_2(A_n)} (2^{l})^{ -2\ga_{0}+\ga_{\infty} +\tilde \gd} \leq  c A_n^{ \ga_{\infty} - 2\ga_0 + \tilde \gd }$, and \eqref{secondterminfourterms} is bounded by a constant times $A_n^{ -2\ga_0+2\tilde \gd} \leq c A_n^{-\ga_0 +3\tilde \gd} \bbP(\rho_0 >A_n)$ (recall that if  $\ga_0<\ga_{\infty}$ then  $\bbP(\rho_0 >A_n) \sim c' \bbP(1/c_0>A_n)$).
Finally, if $\ga_0 \geq 2\ga_{\infty} - \tilde \gd$, we have that  $\sum_{l=0}^{\log_2(t)} (2^{l})^{\ga_{0} -2\ga_{\infty} +\tilde \gd} \leq  c t^{\ga_0 - 2\ga_{\infty}+\tilde \gd}$, and then the third term in \eqref{fourterms} is bounded by a constant times $t^{-2\ga_{\infty} +2\tilde \gd} \bbP(c_{-1} >A_n)$, with $\bbP(c_{-1}>A_n) \sim c \bbP(\rho_0 >A_n)$, since $\ga_0>\ga_{\infty}$.

All together, we have bounded the four terms in \eqref{fourterms}, so that
\begin{equation}
 \bbP \big(  1/p_0 > t  \mid \rho_0>A_n  \big)  \leq c A_n^{-\ga_0 +3\tilde \gd}  + c t^{- 2\ga_{\infty} +2\tilde \gd} + \bbP \big( 1/c_0 > t \big) \frac{\bbP(c_{-1} > A_n) }{ \bbP(\rho_0 >A_n)}\, .
 \label{conclusionB1}
\end{equation}
We then get \eqref{forp} by bounding $A_n^{-\ga_0 +3\tilde \gd}$ by $n^{-1+ 4\tilde \gd}$ (recall $A_n \geq n^{\frac{1}{\ga} -\bar \gd}$), and using that $t\leq n^{2 \bar \gd}$: we get that the first term in \eqref{conclusionB1} is negligible.
\end{proof}

\begin{proof}[Proof of Lemma~\ref{lem:tauB}]
Let us write $\bbP_{A_n}(\cdot)$ for $\bbP(\cdot \mid \rho_0 > A_n)$ for simplicity, and $\xi_{0} = \theta_0/p_0$.
Recall \eqref{app:tau}. We decompose the probability according to whether $\theta_0 \leq 4$ or $1/p_0 \leq 4$ (or neither), and we write
\begin{align*}
\bbP_{A_n} \Big( \xi_0  > t \Big)  \leq 
\bbP_{A_n} \Big( \tfrac{1}{p_0}  > t /4 \Big) 
+ \bbP_{A_n} \Big( \theta_0  > t/4 \Big) 
+  \bbP_{A_n} \Big( \theta_0 >4, \tfrac{1}{p_0}>4 \Big)\, .
\end{align*}
For the last term, recalling formulas~\eqref{p0}-\eqref{theta0}, we get that
\begin{align*}
\bbP_{A_n}\big(  \theta_0 >4, 1/p_0 < 4 \big) 
&=  \bbP_{A_n}\big(  \frac{1}{c_{-1}}  W^{(n)}>1 , c_0 V >3 \big)
\leq  \bbP_{A_n} \big(  V^{(n)} W^{(n)} > 3 \rho_0 \big)\\
& \leq \bbP\big(  V^{(n)} W^{(n)} >  A_n \big) \, .
\end{align*}
Now, one can easily get that $\bbP(V^{(n)} >t) =  t^{-\ga_0+o(1)}$ and $\bbP(W^{(n)}>t) =   t^{-\ga_{\infty}+o(1)}$ as $t\to +\infty$ (uniformly in $n$, see \eqref{tailV}), and hence that $\bbP(V^{(n)}W^{(n)} >t) = t^{-\ga+o(1)}$ (see \cite[Lemma~1.3]{BS17}). 
Therefore, we get that $\bbP (  V^{(n)} W^{(n)} >  A_n )\leq n^{-1+2 \bar \gd}$, for $n$ large enough and $\bar \gd$ small enough (recall $\ga<1$ and $A_n\geq n^{\frac1\ga -\tilde\gd}$). Hence
\begin{equation*}
\bbP_{A_n} \big( \xi_0 > t \big)\leq   \bbP_{A_n}(1/p_0 > t /4 ) + \bbP_{A_n}(\theta_0 > t /4 ) +  n^{-1+2 \bar \gd} \, ,
\end{equation*}
which together with Lemma~\ref{lem:ptheta} concludes the proof of the first part of the lemma (note that $n^{-1 +2\bar \gd}$ is negligible compared to $t^{-2\ga + \tilde \gd}$, uniformly over $t\le n^{\bar \gd}$, provided $\bar \gd$ is small).

For the second part of the lemma, recall that $\tau_{\cB} := \xi_{\cB} \, \mathbf{e}_{\cB}$, with $\mathbf{e}_{\cB} \sim \mathrm{Exp}(1)$ independent of $\xi_{\cB}$: we therefore get, conditioning first on $\xi_{\cB}$ 
\begin{align*}
\bbP \otimes P^{\go}( \xi_{\cB}\, \mathbf{e}_{\cB} >t \mid \rho_{\cB} > A_n )  = \bbE \big[ \e^{- t/\xi_{\cB}} \mid  \rho_{\cB} > A_n \big]
\leq \e^{-t} + \bbP_{A_n} \big( \xi_0 > t \big),
\end{align*}
which concludes the proof.
\end{proof}

\begin{proof}[Proof of Lemma~\ref{lem:limittauB}]
The proof follows essentially from Proposition~\ref{prop:conditional1}. Again, for simplicity of notations, we reduce to the case where $\rho_{\cB}=\rho_0$, \textit{i.e.}\ $x_{\cB} =0$.
Recall the definition of $\xi_{\cB}$, and the formulas \eqref{p0}-\eqref{theta0} for $p_{\cB}, \theta_{\cB}$,
\begin{equation}
\label{eq:writingtauB}
\xi_{\cB}:=\frac{\theta_{\cB}}{ p_{\cB}}= 2  \Big( 1+ c_0 V^{(n)}\Big) \Big(1+ \frac{1}{c_{-1}} W^{(n)}\Big)   \, .
\end{equation}
Notice that $V^{(n)}, W^{(n)}$ are independent of $c_{-1}, c_0$, hence of $\rho_0$. We clearly have that $V^{(n)}, W^{(n)} \to V, W$ as $n \to +\infty$, by monotone convergence.
We  now simply use Proposition~\ref{prop:conditional1}: it gives that conditionally on $\rho_0>t$, $( \frac{1}{c_{-1}}, c_0 )$ converges in distribution as $t\to+\infty$ to
\begin{enumerate}
\item[1.] $(\frac{1}{\bar c_{-1}}, 0)$ if $\bbE[c_{-1}^{\ga}]<+\infty$, $\bbE[1/c_0^{\ga}] =+\infty$;
\item[2.] $(0, \bar c_0)$ if $\bbE[c_{-1}^{\ga}]=+\infty$, $\bbE[1/c_0^{\ga}] <+\infty$;
\item[3.] $((1-B) \frac{1}{\bar c_{-1}} , B\, \bar c_0)$ if $\bbE[c_{-1}^{\ga}]<+\infty$, $\bbE[1/c_0^{\ga}] <+\infty$.
\end{enumerate}
The distributions of $\bar c_{-1}, \bar c_0$ and $B\sim \mathrm{Bern}(q)$ are those given in the statement of Lemma~\ref{lem:limittauB}.
Hence, in view of \eqref{eq:writingtauB}, we get that conditionally on $\rho_0>n$, $\xi_{\cB}$ converges in distribution as $n\to+\infty$ to 
\[
\zeta:= 2 \Big( 1+ B\, \bar c_0 V\Big) \Big(1+ (1-B)\tfrac{1}{\bar c_{-1}} W \Big)  = 2 \Big(1+ B\, \bar c_0 V+ (1-B)\tfrac{1}{\bar c_{-1}} W \Big) \, ,
\]
with $B=1$ if $\bbE[c_{-1}^{\ga}]<+\infty$, $\bbE[1/c_0^{\ga}] =+\infty$ and $B=0$ if $\bbE[c_{-1}^{\ga}]=+\infty$, $\bbE[1/c_0^{\ga}] <+\infty$.

Moreover, in view of Lemma~\ref{lem:tauB} (see in particular \eqref{limitingtaub}), we get that there is a constant $c>0$ such that for any $t>1$,
\begin{equation*}
\bbP( \zeta  >t) \leq c L_0(t) t^{-\ga_0} \1{\bbE[1/c_0^{\ga}]<+\infty} + c L_{\infty}(y)t^{-\ga_{\infty}} \1{\bbE[c_{-1}^{\ga}]<+\infty} \, .
\end{equation*}
This implies in particular that $\bbE[(\zeta)^{\ga}]<+\infty$.
\end{proof}


\section{Trap properties: proofs of the lemmas}\label{dimostrazionitrappole}
In this section we collect the proofs of some technical properties of both simple and $k$-distant traps. Often the proofs deal with both cases at once.
We recall that $\rho_x^{(k)} = \e^{-\lambda(k+1)} \frac{c_{x-1}}{c_{x+k}} =   \rho_{x} \cdots \rho_{x+k}\,$, and $\rho_x = \rho_x^{(0)}$.

\subsection{Traps are isolated: Proof of Lemmas	\ref{disgiunti} and \ref{disgiunti2}}\label{proofdisgiunti}

	The case $k,k'=0$ will include both the case of simple traps of Lemma~\ref{disgiunti} and the case of $0$-distant well-and-wall traps of Lemma \ref{disgiunti2}.
We start by estimating the probability $\bbP(\cW_x^{(k)} \cap \cW_y^{(k')}   )$, for all $ - n \leq x <y \leq n$ and $ k,k' \geq 0$. 
Note that $\cW_x^{(k)} \subset \{\rho_x^{(k)} > d_n \e^{-q_n}\}$.
	
	\textbullet\ First, if $y -1\neq x+k$ and $y +k'\neq x+k$, then $\rho_x^{(k)}$ and $\rho_y^{(k')}$ are independent, so that by \eqref{asymprhok} (recall also the definition~\eqref{def:dn} of $d_n$)
	\begin{align*}
	\bbP\big( \rho_x^{(k)}, \rho_y^{(k')} > d_n \e^{-q_n}\big) 
	& \leq  \bbP( \rho_x^{(k)} > d_n \e^{-q_n} ) \bbP(\rho_y^{(k')} > d_n \e^{-q_n})  \\
	& \leq  c \,\e^{- \lambda \ga k/2} \e^{- \lambda \ga  k'/2}\psi(d_n)^2 d_n^{-2\ga} \e^{ 4 \ga q_n} \leq  \frac{c'}{n^2} \e^{4  q_n }  \e^{- \lambda \ga (k+k')/2} \, .
	\end{align*}
Here, we used Potter's bound to get that $\psi(d_n \e^{-q_n}) \leq c \psi(d_n) \e^{\ga q_n}$ (and the fact that $\ga\leq 1$).

	\textbullet\ If $y=x+k+1$, then we have that $\rho_x^{(k)} \rho_y^{(k')} = \rho_x^{(k+k'+1)}$, and
	\begin{align*}
	\bbP\big( \rho_x^{(k)}, \rho_y^{(k')} > d_n e^{-q_n} \big) 
	& \leq \bbP\big(  \rho_x^{(k+k'+1)} > d_n^2 \e^{- 2q_n} \big) \\
	& \leq  c\, \e^{-\lambda \ga (k+k'+1)/2} \psi(d_n^2 \e^{- 2q_n }) d_n^{-2\ga} \e^{2\ga q_n} \leq  c'\, \e^{-\lambda \ga (k+k')/2}  n^{-3/2} \, .
	\end{align*}
	The last inequality uses once again Potter's bound (the expression is regularly varying in $n$, with index $-2$).

	\textbullet\ If $ y +k' = x+k$, then necessarily $k>0$. Note that $c_{y-1}$ is independent of $\rho_x^{(k)}$, so recalling Definition~\ref{ktraps},  $\bbP(\cW_x^{(k)} \cap \cW_y^{(k')}   )$ is bounded by
	\begin{align*}
	\bbP\big( \rho_x^{(k)} > d_n e^{-q_n}  ; c_{y-1} > \e^{q_n^2} \big) &\leq c \e^{- \lambda k/2} \psi(d_n \e^{-q_n}) d_n^{-\ga} \e^{\ga q_n} e^{- \ga_{\infty} q_n^{2} /2} \\
	& \leq \frac{c}{n} \e^{- \ga_{\infty} q_n^2/4}  \e^{ - \lambda k/2}  \, .
	\end{align*}
For the last inequality, we used Potter's bound and the definition \eqref{def:dn} of $d_n$, and then took $n$ large enough.

	\smallskip
	Therefore, by a union bound, we have that 
	\begin{align*}
	\bbP  (\cD_n) 
	\leq  C n^2 \e^{- 5 q_n} &\sum_{k,k'\geq 0} \frac{1}{n^2} \e^{4  q_n }  \e^{- \lambda \ga (k+k')/2}\\
	& + c'  n \sum_{k,k'=0}^{+\infty} \e^{-\lambda \ga (k+k')/2}  n^{-3/2} 
+c n \sum_{k=1}^{+\infty} \frac{c}{n}  \e^{- \ga_{\infty} q_n^2/4}  \e^{ - \lambda k/2}  \, .
	\end{align*}
	All together, we obtain that  $\bbP(\cD_n)$ is bounded by a constant times $\e^{-q_n}$ and therefore goes to $0$ as $n\to +\infty$.
	To upgrade this to an almost sure statement, we use a monotonicity trick. We need to introduce
	\begin{align}
	\label{tildeDn}
	\tilde \cD_{\ell} &:= \bigcup_{ \substack{-2\ell \leq x<y\leq 2\ell \\ |x-y|\leq 2 \ell  \e^{- 5q_{\ell}} }}  \bigcup_{k\geq 0}  \bigcup_{k'\geq 0}  \tilde \cW^{(k)}_x \cap \tilde \cW_y^{(k')} \, ,
	\end{align}
	with $\tilde \cW_x^{(k)}:= \{ \rho_x > \tilde c\,  d_\ell \e^{-5q_{\ell}}\} \cap \{c_{x-1}, \frac{1}{c_{x+k}} > \tilde c \e^{q_{\ell}^2}\}$ for $k\geq 0$ (for simple traps the second condition is absent).
The constant $\tilde c$ is chosen large enough so that for all $n \in \{\ell, \ldots, 2\ell \}$ we have $ \tilde c\,  d_\ell \e^{-5 q_{\ell} } \leq  d_{n} \e^{-q_n}$, $ \tilde c\,  \e^{q_{\ell}^2} \leq  d_{n} \e^{- q_n^2}$.
	With this definition, we have that $\cD_n \subset \tilde \cD_\ell $ for all $\ell\leq n\leq 2\ell$.
	Then, as above, we obtain that $\bbP(\tilde \cD_\ell) \leq c \,\e^{- 5 q_{\ell}}$. Setting $n_\ell = \exp( (2\log \ell)^4 )$ so that $q_{n_{\ell}} = 2\log \ell$, we therefore  have that $\sum_\ell \bbP(\tilde \cD_{n_\ell}) <+\infty$, and by Borel-Cantelli there is some $\ell_0$ such that $\tilde \cD_{n_\ell}$ does not occur for $\ell\geq \ell_0$.
	Then, we realize that $n_{\ell+1} \sim n_{\ell}$ as $\ell \to +\infty$, so  $n_{\ell+1} \leq 2 n_{\ell}$ for all  $\ell$ large enough, say $\ell \geq \ell_1$. Therefore, for any $\ell\geq  \tilde \ell:=\max (\ell_0,\ell_1)$,  $\cD_{n_\ell}$ does not occur, and additionally  $\cD_n \subset \tilde \cD_{n_\ell}$ for any $n_{\ell} \leq n \leq 2 n_{\ell} \leq n_{\ell+1}$: we conclude that $\cD_{n}$ does not occur for any  $n\geq n_{\tilde \ell}$.

\subsection{Traps are not too deep: Proof of Lemmas	\ref{lem:maxrho} and \ref{lem:maxrho2}}\label{prooflem:maxrho}

	The case $k=0$ will include both the case of simple traps of Lemma \ref{lem:maxrho} and the case of $0$-distant well-and-wall traps of Lemma \ref{lem:maxrho2}.
	First of all, by \eqref{asymprhok} (and using Potter's bound), we have that
	\[
	\bbP\big( \rho_x^{(k)} > d_n \e^{q_n/2} \big)
	\leq c\,  \e^{-\ga \lambda  k/2}\psi(d_n) d_n^{-\ga} \e^{-\frac\ga 4 q_n} 
	\leq \frac{c'}{n} \e^{-\lambda \ga k/2}  \e^{-  \ga q_n /4}  \, .
	\]
	We therefore get thanks to a union bound that 
	$\bbP (  M_n > d_n \e^{q_n} ) \leq c'' \e^{- \ga q_n/4}$.
	Hence, setting $n_{\ell}:= \exp\{ (\tfrac{8}{\ga}\log \ell)^{4} \}$ (so that $q_{n_\ell} = \frac{8}{\ga} \log n$), we get by Borel-Cantelli that if $\ell$ is large enough, $M_{n_{\ell}} \leq d_{n_{\ell}} \e^{q_{n_{\ell}}/2}$. Then, since $M_n$ is increasing, we get that for all $ n_{\ell} \leq n\leq n_{\ell+1}$ we have $M_n \leq M_{n_{\ell+1}} \leq  2  d_{n} \e^{q_n/2}$ for $\ell$ large enough, since $n_{\ell+1}\sim n_{\ell}$ as $\ell\to+\infty$. We get Lemma~\ref{lem:maxrho2} by using that $2\leq \e^{q_n/2}$ for $n$ large enough.

\subsection{All triblocks are good: Proof of Lemmas	\ref{sogood} and \ref{sogood2}}\label{proofsogood}

The case $k=0$ will include both the case of simple traps of Lemma \ref{lem:maxrho} and the case of $0$-distant well-and-wall traps of Lemma \ref{lem:maxrho2}.
Let us estimate the probability, for any $x$, $k\geq 0$ and $y\neq x-1, x+k$, 
	\begin{align*}
	\bbP\big(  \rho_x^{(k)} > d_n\, \e^{-q_n} , c_{y} \notin [\e^{-4q_n},\e^{4 q_n}] \big)  
	& \leq \bbP \big(  \rho_x^{(k)} > d_n\, \e^{-q_n} \big) \big( \bbP(c_y > \e^{4 q_n} ) + \bbP(\tfrac{1}{c_y} > \e^{4 q_n}) \big)\\
	& \leq  c \e^{-\lambda \ga k/2} \psi(d_n) d_n^{-\ga} \e^{ 2\ga q_n}  \e^{- 3\ga q_n} \, .
	\end{align*}
	Here, we used that $c_y$ and $\rho_x^{(k)}$ are independent, and Potter's bound to get that $\psi(d_n \e^{q_n})\leq c\psi(d_n) \e^{\ga q_n}$ and that $\bbP(c_y > \e^{4 q_n} )\leq c \e^{- 3\ga q_n}$ (and similarly for $\bbP( \tfrac{1}{c_y} > \e^{4 q_n} )$).
For Lemma~\ref{sogood}, we additionally have to bound
\begin{align*}
\bbP\big(  \rho_x  > d_n\, \e^{-q_n} , c_{x} > \e^{4q_n}\big)   &\leq 
\bbP\big( c_{x-1}  > d_n \e^{3 q_n} \,  , c_{x} > \e^{4q_n}\big) \\
& \leq c L_\infty(d_n) d_n^{-\ga_{\infty}} \e^{-\ga q_n} \leq c \psi(d_n) d_n^{-\ga} \e^{-\ga q_n}\, .
\end{align*}
A similar bound holds for $\bbP\big(  \rho_x  > d_n\, \e^{-q_n} , c_{x-1} < \e^{-4q_n}\big) $.
	
Therefore, since by definition of $d_n$ we have $ \psi(d_n) d_n^{-\ga}\leq c'/n$, we get by a union bound
\begin{align*}
\bbP(\cG_n ) \leq c' C_n \sum_{k\geq 0}   \e^{-\lambda \ga k/2} \e^{- \ga q_n} \leq c'' \e^{- \ga q_n/2}\, ,
\end{align*}
so $\bbP(\cG_n )\to 0$ as $n\to+\infty$.

We can easily upgrade this to an a.s.\ statement, in the same manner as in Section~\ref{proofdisgiunti}, by introducing some appropriate event $\tilde \cG_\ell$ (analogously to \eqref{tildeDn}, by using the events $\{\rho_x^{(k)} > \tilde c d_n\e^{-q_n} \}$ and $\{ c_{y} \notin [ \tilde c \e^{-4q_n}, \tilde c \e^{4 q_n}]\}$ for some appropriate constant $\tilde c$), in such a way that $\cG_n \subset \tilde \cG_{\ell}$ for all $\ell \leq n \leq 2 \ell$.
Then, $\bbP$-a.s.,  $\tilde \cG_\ell$ occurs  finitely many times along the subsequence $n_\ell = \exp ( (\tfrac{4}{\ga} \log \ell)^{4})$ (so $q_{n_{\ell}}= \frac{4}{\ga} \log n$), and we get the conclusion of the lemma since $n_{\ell+1}\leq 2 n_{\ell}$ for $\ell$ large enough).

\subsection{$k$ is small: Proof of Lemma \ref{k-is-small}}\label{proofk-is-small}

Let us notice that for any $x$, and any $k \geq 0$, recalling \eqref{asymprhok}
\begin{align*}
\bbP\big( \cW_x^{(k)} \big) \leq \bbP\big( \rho_x^{(k)} > d_n \e^{-q_n} \big) \leq c\e^{- \lambda \ga k/2} \psi(d_n ) d_n^{-\ga}  \e^{2\ga q_n} \, ,
\end{align*}
where we used Potter's bound to get that $\psi(d_n \e^{-q_n}) \leq c \psi(d_n )\e^{\ga q_n}$.
Since $\psi(d_n ) d_n^{-\ga}\leq c/n$ by definition of $d_n$, we get by a union bound that
\begin{align*}
\bbP\big( \cK_n \big) \leq c'  \e^{2\ga q_n}\sum_{k\geq \tfrac6\lambda q_n} \e^{- \lambda \ga k/2} \leq c'' \e^{2\ga q_n} \e^{-3\ga q_n } = c'' \e^{-\ga q_n}\, ,
\end{align*}
so $\bbP(\cK_n )\to 0$ as $n\to+\infty$.
This is easily upgraded  to an a.s.\ statement, as in Section~\ref{proofdisgiunti}.

\subsection{Traps are the only annoying parts of the environment (\ref{assumptionA}): Proof of Lemma~\ref{annoying}}\label{proofannoying}

In this subsection, we work with Assumption~\ref{assumptionA}.
Let us estimate, for any $x\in \bbZ$ and $k\geq 1$, the probability 
\begin{align*}
\bbP\big( \rho_x^{(k)} > \gep_n \e^{-\lambda \ga k/2} d_n \,   ; \,\rho_x, \rho_{x+k} \leq  d_n \e^{- q_n}  \big)\,.
\end{align*}
We assume that we are in the case where $\bbP( \rho_0 > d_n ) \sim  c L_0(d_n) d_n^{-\ga_0}$: in particular $\ga_{\infty} \geq \ga_0 =\ga$ (and $\gamma_{\infty}\leq \gamma_0$ if $\ga_{\infty} = \ga_0$), and we have $\bbE[c_0^{\ga}]<+\infty$. The case $\bbP( \rho_0 >t ) \sim c L_{\infty} (t) t^{-\ga_{\infty}}$ is symmetric. 

Since $\{\rho_x<d_n\e^{-q_n}\}\subset \big\{\{c_x>\e^{\frac{q_n}{2}-\lambda}\}\cup\{c_{x-1}<d_n\e^{-q_n/2}\}\big\}$, it is enough to control two contributions.
The first contribution is
\begin{align}
\bbP\big( \rho_x^{(k)} > \gep_n \e^{-\lambda k/2}  d_n ;\,  c_{x} > & \e^{\frac{q_n}{2}-\lambda}\text{ or } \tfrac{1}{c_{x+k-1}} > \e^{\frac{q_n}{2}-\lambda}  \big)
	\leq 2 \bbP\big( \tfrac{c_{x-1}}{c_{x+k}} > \gep_n \e^{\lambda k/2}  d_n  \big) \e^{- \ga q_n /4} \notag\\
	& \leq  c  \e^{ - \lambda \ga k/4} \psi(d_n) d_n^{-\ga}  \gep_n^{-2\ga}  \e^{- \ga q_n /4} \leq \frac{c'}{n}  \e^{ - \lambda \ga k/4} \e^{- \ga q_n /5}\, .
\label{daje1}
\end{align}
Here, we used that $c_x, c_{x+k-1}$ are independent of $\rho_{x}^{(k)}$, together with Potter's bound and the definition of $d_n$. We also used the fact that $\gep_n^{-2\ga} = \e^{o(q_n)}$ from our choice of $\gep_n$.

The second contribution we need to control is, using $c_{x-1} \leq  \rho_x c_x$ and $\tfrac{1}{c_{x+k}} \leq \rho_{x+k} \tfrac{1}{c_{x+k-1}}$ (recall that in $\cH_n$ we have that $\rho_x,\rho_{x+k} \leq d_n \e^{-q_n}$),
\begin{align}
\label{daje2}
\bbP\Big(  \rho_x^{(k)} \geq  \gep_n  \e^{-\lambda k/2}  d_n   &\, ;\, c_{x-1} , \tfrac{1}{c_{x+k}} \, \leq d_n \e^{- \frac12 q_n}  \Big) 
\notag \\
&\leq \bbP  \Big( \tfrac{1}{c_{x+k}}  >  \tfrac{1}{c_{x-1}} \e^{ \lambda k/2}  \gep_n d_n   ; \e^{ \frac14 q_n} \leq c_{x-1} \leq d_n \e^{- \frac12 q_n}   \Big)
\, .
\end{align}
Indeed, we used that $c_{x-1} = c_{x+k} \e^{\lambda (k+1)}   \rho_x^{(k)}$, so that $c_{x-1} \geq \e^{\lambda k/2} \gep_n \e^{q_n/2} \geq \e^{q_n/4}$ (for $n$ large enough). Then, the idea is similar to the proof of Proposition~\ref{prop:conditional1}, but here we need a more quantitative estimate, which bring several technicalities. Conditioning with respect to $c_{x-1}$, we get that \eqref{daje2} is bounded by a constant times
\begin{align}
 \bbE\Big[ L_0& \Big( \e^{ \lambda k/2} d_n \gep_n /c_{x-1} \Big)   \e^{ - \lambda \ga k/2} d_n^{-\ga} \gep_n^{-\ga} c_{x-1}^{\ga} \ind_{\{ \e^{q_n/4} \leq c_{x-1} \leq d_n \e^{-q_n/2}   \}} \Big] \notag \\
& \leq c'  \e^{ - \lambda \ga k/4} \gep_n^{-2\ga}  d_n^{-\ga} 
 \bbE\Big[ L_0\Big( d_n /c_{x-1} \Big)  c_{x-1}^{\ga} \ind_{\{ \e^{q_n/4} \leq c_{x-1} \leq d_n \e^{-q_n/2}   \}} \Big]\, ,
\label{daje3}
\end{align}
where we used Potter's bound. 

\textbullet\ If $\ga= \ga_{0} <\ga_\infty$, we get by Potter's bound that for any $\eta>0$ (small enough)
\begin{align*}
\bbE\Big[ L_0 \big( d_n /c_{0} \big)  c_{0}^{\ga} \ind_{\{ e^{q_n/4} \leq c_{0} \leq d_n \e^{-q_n/2}   \}} \Big]
& \leq c L_0(d_n) \bbE\big[   (c_{0})^{\ga +\eta} \ind_{\{ c_{0} \geq  \e^{q_n/4}   \}} \big]\\
& \leq  c L_0(d_n) \e^{ - \eta' q_n} \, ,
\end{align*}
with $\eta' = (\ga -\ga_{\infty} +2\eta)/4>0$ (in particular $\ga+\eta < \ga_{\infty}$).
Plugged into \eqref{daje3}, we therefore get that 
\begin{equation}
\bbP\Big(  \rho_x^{(k)} \geq  \gep_n d_n  \, ;\, c_{x-1} , \tfrac{1}{c_{x+k}} \leq d_n \e^{-q_n/2}  \Big)  \leq  \frac{c}{n} \gep_n^{-2\ga} \e^{-\lambda\ga k/4} \e^{ - \eta' q_n} \, ,
\label{daje4}
\end{equation}
where we used that $L_0(d_n) d_n^{-\ga} \leq c/n$ (recall that $\bbP( \rho_0 >d_n ) \sim c L_0(d_n) d_n^{-\ga_0}$).

\textbullet\ If $\ga_0=\ga_{\infty}$, then we split
\begin{align}
\bbE\Big[ L_0 \big( d_n /c_{0} \big)  c_{0}^{\ga} \ind_{\{ \e^{q_n/4} \leq c_{0} \leq d_n \e^{-q_n/2}   \}} \Big] 
= &\, \bbE\Big[ L_0 \big( d_n /c_{0} \big)  c_{0}^{\ga} \ind_{\{ \e^{q_n/4} \leq c_{0} \leq \sqrt{d_n }   \}} \Big]  \notag\\
&+ \bbE\Big[ L_0 \big( d_n /c_{0} \big)  c_{0}^{\ga} \ind_{\{ \sqrt{d_n}\leq c_{0} \leq d_n \e^{-q_n/2}   \}} \Big] \, .
\label{daje5a}
\end{align}
For the first term, we use that $L_0(\e^t)$ is regularly varying to get that $L_0(d_n/c_0) \leq c L_0(d_n)$ for $ \e^{q_n/4} \leq c_0 \leq \sqrt{d_n}$ (since $ \frac12 \log d_n \leq  \log(d_n/c_0) \leq  \log d_n$).
Therefore, the first term in \eqref{daje5a} is bounded by a constant times $L_0 (d_n) \bbE\Big[ c_{0}^{\ga} \ind_{\{ c_{0}> \e^{q_n/4}  \}} \Big]$,
with
\begin{align}
\bbE\big[ c_{0}^{\ga} \ind_{\{ c_{0}> \e^{q_n/4}  \}} \big] & = \e^{\ga q_n/4} \bbP(c_0^{\ga} > \e^{\ga q_n/4}) +  \int_{\e^{\ga q_n/4}}^{+\infty} \bbP(  c_0^{\ga} >t ) \dd t \notag \\
& \sim L_{\infty}(\e^{q_n/4})   + \ga \int_{q_n/4}^{+\infty} L_{\infty}(\e^u) \dd u \, ,
\label{expectation}
\end{align}
where for the second line we used that $ \bbP(  c_0^{\ga} >t )\sim L_0(t^{1/\ga}) t^{-1}$ and then  a change of variable $t= \e^{\ga u}$.
Now, we use that $L_{\infty}(\e^u) = \gp_{\infty}(u) u^{\gamma_{\infty}}$ with $\gamma_{\infty} >-1$ to get that the last integral is bounded by a constant times $ \gp_{\infty}(q_n) q_n^{1+\gamma_{\infty}}$ (and so is the first term).

For the second term in \eqref{daje5a}, we write
\begin{align}
\bbE\Big[ L_0 \big( d_n /c_{0} \big)  c_{0}^{\ga} \ind_{\{ \sqrt{d_n} \leq c_{0} \leq d_n \e^{-q_n}  \}} \Big] 
	&\leq\sum_{j= q_n }^{\frac12 \log d_n} L_0 (\e^j)  (\e^{-j} d_n)^{\ga} \bbP(c_0 \in  [\e^{-(j+1)}d_n, \e^{-j} d_n ]) \notag\\
	& \leq c L_{\infty}(d_n) \sum_{j= q_n }^{\frac12 \log d_n} L_0 (\e^j)  \, .
\end{align}
For the last inequality, we used that $\bbP(c_0 \in  [\e^{-(j+1)}d_n, \e^{-j} d_n ]) \leq c L_{\infty}(\e^{-j}d_n) (\e^{-j} d_n)^{-\ga}$, together with the fact that $ L_{\infty}(\e^{-j}d_n) \leq c L_{\infty}(d_n)$ for the range of $j$ considered (since $L_{\infty}(\e^t)$ is regularly varying).
Then, since $L_0(\e^{j})\sim \gp_0(j) j^{\gamma_0}$ with $\gamma_0 \neq 1$,  we get that  $\sum_{j= q_n }^{\frac12 \log d_n} L_0 (\e^j)$ is bounded by a constant times $\gp_0(\log d_n) (\log d_n)^{1+\gamma_0} \leq c (\log n) L_0(d_n)$ if $\gamma_0>-1$; by $\gp_0(q_n)  q_n^{1+\gamma_0}$ if $\gamma_0 <-1$.

All together, we get that \eqref{daje5a} is bounded by a constant times
 \begin{align*}
 L_0(d_n) \gp_{\infty}(q_n) q_n^{1+\gamma_{\infty}}  + 
 \begin{cases}
L_0(d_n){\infty} \times (\log n) L_\infty(d_n) & \quad \text{if } \gamma_0> -1 \, ; \\
L_{\infty}(d_n) \times \gp_0(q_n) q_n^{1+\gamma_0} & \quad \text{if } \gamma_0<-1 \, . 
 \end{cases}
 \end{align*}
In the case $\gamma_0 >-$ ($\bbE[1/c_0^{\infty}] =+\infty$), then necessarily $\gamma_{\infty}< -1$ (we need to have $\bbE[c_0^{\infty}] <+\infty$): we may bound   $(\log n) L_{\infty}(d_n) \leq   c \gp_{\infty} (\log n) (\log n)^{1+\gamma_{\infty}} \leq c' gp_{\infty} (q_n) q_n^{1+\gamma_{\infty}}$ (recall $q_n =(\log n)^1/4$).
In the case $\gamma_0 <-1$, since we assume that $\bbP(\rho_0>d_n) \sim c L_0(d_n) d_n^{-\ga}$ it means in particular that $L_{\infty}(d_n) \leq L_{0}(d_n)$.
Overall, using again that $L_0(d_n) d_n^{-\ga} \leq c/n$, we get from \eqref{daje3} that
\begin{equation}
\bbP\Big(  \rho_x^{(k)} \geq  \gep_n d_n  \, ; \, c_{x-1} , \tfrac{1}{c_{x+k}} \leq d_n \e^{-q_n/2}  \Big)  \leq  \frac{c}{n} \gep_n^{-2\ga} \e^{-\lambda\ga k/4}  q_n^{-  c} \, ,
\label{daje5}
\end{equation}
for some constant $c>0$;  $c=-(1+\gamma_{\infty})/2$ if  $\gamma_0 >-1$ and $c = - (1+\gamma_0)/2 $ if $\gamma_0<-1$ (recall $\gamma_{\infty}\leq \gamma_0$).

Finally, by a union bound we get that,
thanks to \eqref{daje1} and \eqref{daje4}-\eqref{daje5}
\begin{align*}
\bbP\big( \cH_n \big) \leq c \gep_n^{-2\ga} 
\e^{- \eta' q_n} \quad \text{ if } \ga_0<\ga_{\infty} \, ; \qquad 
\bbP\big( \cH_n \big) \leq c \gep_n^{-2\ga} 
q_n^{-c} &\quad \text{ if } \ga_0=\ga_{\infty}\, .
\end{align*}
This concludes the proof if we had chosen $\gep_n = q_n^{- \gd}$  with $\gd>0$ small enough.

\subsection{Traps are the only annoying parts of the environment (\ref{assumptionB}): Proof of Lemma~\ref{annoying2}} \label{proofannoying2}
We work under Assumption \ref{assumptionB}. In this case, because of \eqref{rhowellandwall} and of the definition\eqref{def:dn} of $d_n$, we have that (recall Assumption~\ref{mainassumption})
\begin{align}\label{giovanni}
\gp_0(\log n)\gp_\infty(\log n)\,(\log n)^{1+\gamma_0+\gamma_\infty}d_n^{-\ga}\sim c\,n^{-1}\, ,
\end{align}
 with $\gamma_0, \gamma_{\infty}>-1$.
We want to estimate 
$
\bbP\big( \rho_x^{(k)} > \varepsilon_n\e^{-\lambda k/2}  d_n \, ;  c_{x-1} <\e^{q_n^2} \mbox{ or }   c_{x+k}^{-1} <\e^{q_n^2} \big)\,.
$

We will deal with $\bbP( \rho_x^{(k)} > \varepsilon_n\e^{-\lambda k/2}  d_n \, ;  c_{x+k}^{-1} <\e^{q_n^2})$, the remaining part being similar.  First of all, using \eqref{giovanni}, we see that, using Potter's bound
\begin{align*}
\bbP( \tfrac{c_{x-1}}{c_{x+k}} > \varepsilon_n\e^{\gl k/2} d_n\,; \; c_{x+k}^{-1} <1)
	& \leq \bbP( {c_{x-1}} > \varepsilon_n\e^{\gl k/2} d_n)\\
	&\leq \frac{c}{n} \gp_0(\log n)^{-1} (\log n)^{-(1+\gamma_0)} \gep_n^{-2\ga} \e^{-\ga\gl k/4} \, .
\end{align*}
Provided that $\gd$ has been fixed small enough, summing this quantity over $x\in[-n,n]$ and over $k\geq 0$ we obtain something that goes to $0$ as $n\to\infty$ (recall $\gep_n = q_n^{-\gd} = (\log n)^{-\gd /4}$).
We can therefore restrict to the event  $c_{x+k}^{-1}>1$. We bound
\begin{align}\label{spallotto}
\bbP( \tfrac{c_{x-1}}{c_{x+k}} > \varepsilon_n \e^{\gl k/2}d_n;& \; 1<c_{x+k}^{-1} <\e^{q_n^2})
= \E\Big[\P\big(\tfrac{c_{x-1}}{c_{x+k}} > \varepsilon_n\e^{\gl k/2} d_n\,\big|\,c_{x+k}\big)\ind_{\{1<c_{x+k}^{-1}<\e^{q_n^2}\}}\Big]\notag\\
&\leq d_n^{-\ga}\varepsilon_n^{-2\ga} \e^{-\ga\gl k/4} \E\Big[L_\infty( d_nc_{x+k})c_{x+k}^{-\ga} \ind_{\{1<c_{x+k}^{-1}<\e^{q_n^2}\}}\Big]\,.
\end{align}
Since $L_\infty(d_n c_{x+k})=\gp_\infty\big(\log(d_n c_{x+k})\big)\big(\log (d_n c_{x+k})\big)^{\gamma_\infty}\leq c'\gp_\infty(\log n )(\log n )^{\gamma_\infty}$ under the condition that $1<c_{x+k}^{-1}<\e^{q_n^2}$, we are left to control the last expectation in~\eqref{spallotto}. Analogously to~\eqref{expectation}:
\begin{align*}
\E\Big[c_{x+k}^{-\ga} \ind_{\{1<c_{x+k}^{-1}<\e^{q_n^2}\}}\Big]
	&\leq 1 + \int_{1}^{\e^{\ga q_n^2}} \P(c_{x+k}^{-\ga}>t) \,{\rm d}t\\
	&\leq  1+ c \int_{0}^{q_n^2}L_0 (\e^u) \dd u \leq  c' \gp_0 (q_n^2) q_n^{2(1+\gamma_0)}\, .
\end{align*}
Putting this back into \eqref{spallotto} (and using Potter's bound), we obtain
\begin{align*}
\bbP( \tfrac{c_{x-1}}{c_{x+k}} > \varepsilon_n \e^{\gl k/2}d_n\, ; 1<c_{x+k}^{-1} <\e^{q_n^2})
	&\leq c''d_n^{-\ga} \gep_n^{-2\ga} \e^{-\ga\gl k/4}\gp_\infty(\log n )(\log n )^{\gamma_\infty} q_n^{2(1+\gamma_0)} \gp_0 (q_n^2) \\
	&\leq \frac{c'''}{n} \e^{-\ga\gl k/4} (\log n)^{- \frac12  (1+\gamma_0)}  \frac{ \gp_0(q_n^2)}{\gp_0(\log n )} \gep_n^{- 2\ga}  ,
\end{align*}
where we have used \eqref{giovanni} and the fact that $q_n =(\log n)^{1/4} $.
Provided that $\gd$ has been fixed small enough (recall $\gep_n = q_n^{-\gd}$),  summing the last quantity over $x\in[-n,n]$ and over $k\geq 0$ we obtain again something that goes to $0$ as $n\to\infty$, concluding the proof.

\end{appendix}

\bibliography{bibliografia}
\bibliographystyle{amsalpha}

\end{document}